\documentclass[12pt]{amsart}

\usepackage{amssymb, amsmath, amsthm}

\setlength{\paperwidth}{21.0cm}
\setlength{\hoffset}{-25.4mm} 
\setlength{\oddsidemargin}{3cm}
\setlength{\evensidemargin}{3cm}
\setlength{\textwidth}{15cm}
\setlength{\paperheight}{29.7cm}
\setlength{\voffset}{-25.4mm}
\setlength{\topmargin}{2cm}
\setlength{\textheight}{24.0cm}

\allowdisplaybreaks

\numberwithin{equation}{section}

\theoremstyle{plain}
\newtheorem{thm}{Theorem}[section]
\newtheorem{prop}[thm]{Proposition}
\newtheorem{lem}[thm]{Lemma}

\theoremstyle{definition}
\newtheorem{defn}[thm]{Definition}

\newtheorem{example}[thm]{Example}

\newcommand{\ichi}{\mathbf{1}}

\newcommand{\K}{\mathbb{K}}
\newcommand{\N}{\mathbb{N}}
\newcommand{\R}{\mathbb{R}}
\newcommand{\T}{\mathbb{T}}
\newcommand{\Z}{\mathbb{Z}}
\newcommand{\calA}{\mathcal{A}}
\newcommand{\calB}{\mathcal{B}}

\newcommand{\calF}{\mathcal{F}}

\newcommand{\calM}{\mathcal{M}}
\newcommand{\calS}{\mathcal{S}}
\newcommand{\supp}{\mathrm{supp}\, }

\newcommand{\II}{I\hspace{-3pt}I}
\newcommand{\op}{\mathrm{Op}}
\newcommand{\ZnN}{(\mathbb{Z}^n)^N}

\begin{document}
\title[Multilinear pseudo-differential operators]
{
Boundedness of 
multilinear pseudo-differential operators 
of $S_{0,0}$-type in $L^2$-based amalgam spaces 
}

\author[T. Kato]{Tomoya Kato}
\author[A. Miyachi]{Akihiko Miyachi}
\author[N. Tomita]{Naohito Tomita}

\address[T. Kato]
{Division of Pure and Applied Science, 
Faculty of Science and Technology, Gunma University, 
Kiryu, Gunma 376-8515, Japan}
\address[A. Miyachi]
{Department of Mathematics, 
Tokyo Woman's Christian University, 
Zempukuji, Suginami-ku, Tokyo 167-8585, Japan}
\address[N. Tomita]
{Department of Mathematics, 
Graduate School of Science, Osaka University, 
Toyonaka, Osaka 560-0043, Japan}

\email[T. Kato]{t.katou@gunma-u.ac.jp}
\email[A. Miyachi]{miyachi@lab.twcu.ac.jp}
\email[N. Tomita]{tomita@math.sci.osaka-u.ac.jp}

\date{\today}

\keywords{Multilinear pseudo-differential operators,
multilinear H\"ormander symbol classes}
\thanks{This work was supported by JSPS KAKENHI Grant Numbers 
JP17J00359 (Kato), JP16H03943 (Miyachi), and JP16K05201 (Tomita).}

\subjclass[2010]{35S05, 42B15, 42B35}

\begin{abstract}
We consider the multilinear 
pseudo-differential operators 
with symbols in a generalized $S_{0,0}$-type class 
and prove the boundedness of 
the operators from 
$(L^2,\ell^{q_1}) \times \dots \times (L^2,\ell^{q_N})$
to $(L^2,\ell^{r})$, where $(L^2, \ell^{q})$ denotes 
the $L^2$-based amalgam space.  
This extends the previous result by the same 
authors, which treated the bilinear pseudo-differential operators and gave the $L^2 \times L^2 $ to 
$(L^2, \ell^{1})$ boundedness. 
\end{abstract}

\maketitle

\section{Introduction}\label{Introduction}

First of all, throughout this paper, the letter $N$ denotes 
a positive integer unless the contrary is explicitly stated.

For a bounded measurable 
function $\sigma = \sigma (x, \xi_1, \dots, \xi_N)$ on $(\R^n)^{N+1}$,
the multilinear pseudo-differential operator
$T_{\sigma}$ is defined by
\[
T_{\sigma}(f_1,\dots,f_N)(x)
=\frac{1}{(2\pi)^{Nn}}
\int_{(\R^n)^N}e^{i x \cdot(\xi_1+\cdots+\xi_N)}
\sigma (x, \xi_1, \dots, \xi_N) 
\prod_{j=1}^N\widehat{f_j}(\xi_j)
\, d\xi_1 \dots d\xi_N
\]
for $f_1,\cdots,f_N \in \calS(\R^n)$. 
The function $\sigma$ is called the symbol 
of the operator $T_{\sigma}$. 
If $N=1$ (resp.\ $N=2$), we call 
$T_{\sigma}$ the linear (resp.\ bilinear)
 pseudo-differential operator.

For the boundedness of the multilinear operators 
$T_{\sigma}$, we shall 
use the following terminology. 
Let $X_{j}$, $j=1,\dots, N$, and $Y$ be 
function spaces on $\R^n$ 
equipped with quasi-norms 
$\|\cdot \|_{X_{j}}$ and $\|\cdot \|_{Y}$, 
respectively.  
If there exists a constant $A$ such that 
\begin{equation}\label{boundedness-X_1X_2Y}
\|T_{\sigma}(f_1,\dots,f_N)\|_{Y}
\le A \prod_{j=1}^{N} \|f_{j}\|_{X_{j}} 
\;\;
\text{for all}
\;\;
f_{j}\in \calS \cap X_{j},  
\;\;
j=1,\dots,N,
\end{equation}
then, 
with a slight abuse of terminology, 
we say that 
$T_{\sigma}$ is bounded from 
$X_1 \times \dots \times X_N$ to $Y$ 
and write 
$T_{\sigma}: X_1 \times \dots \times X_N \to Y$.  
The smallest constant $A$ in  
\eqref{boundedness-X_1X_2Y} 
is denoted by 
$\|T_{\sigma}\|_{X_1 \times \dots \times X_N \to Y}$. 
If $\calA$ is a class of symbols,  
we denote by $\mathrm{Op}(\calA)$
the class of all operators $T_{\sigma}$ 
corresponding to $\sigma \in \calA$. 
If $T_{\sigma}: X_1 \times \dots \times X_N \to Y$ 
for all $\sigma \in \calA$, 
then we write 
$
\mathrm{Op}(\calA) 
\subset B (X_1 \times \dots \times X_N \to Y)$. 

We introduce the following symbol class of $S_{0,0}$-type.

\begin{defn}\label{BSW00} 
For a nonnegative function 
$W$ on $(\R^n)^N$, 
we denote by 
$S^{W}_{0,0}(\R^n, N)$ the set of all those 
smooth functions 
$\sigma = \sigma (x, \xi_1, \dots, \xi_N)$ 
on $(\R^n)^{N+1}$ 
such that the estimate 
\[
|\partial^{\alpha}_x 
\partial^{\beta_1}_{\xi_1} \cdots \partial^{\beta_N}_{\xi_N} 
\sigma(x,\xi_1,\dots,\xi_N)|
\le C_{\alpha,\beta_1,\dots,\beta_N}
W (\xi_1,\dots,\xi_N)
\]
holds for all multi-indices
$\alpha, \beta_1, \dots, \beta_N \in \N_0^n
=\{ 0,1,2,\dots\}^{n}$. 
We shall call $W$ the {\it weight function\/} 
of the class $S^{W}_{0,0}(\R^n, N)$. 
\end{defn}

For the weight function 
$W(\xi_1,\dots,\xi_N) = (1+|\xi_1|+\dots+|\xi_N|)^{m}$, 
$m\in\R$, 
we denote
the class $S_{0,0}^{W}(\R^n, N)$ 
by $S_{0,0}^{\langle m \rangle } (\R^n, N)$. 
When $N=1$, the 
class $S_{0,0}^{\langle m \rangle } (\R^n, 1)$ 
coincides with the well-known H\"ormander class 
$S_{0,0}^{m}(\R^n)$. 
In several literatures, the class 
$S_{0,0}^{\langle m \rangle } (\R^n, N)$ 
is denoted by several different symbols 
and is called the multilinear H\"ormander class.

The study of boundedness of 
the pseudo-differential operators with symbols in 
linear and multilinear H\"ormander classes
have been done by a lot of researchers.
For instance, in the linear case, $N=1$,
the celebrated Calder\'on-Vaillancourt theorem 
states that
\begin{equation}\label{linearLpbdd}
\op(S_{0,0}^{\langle 0 \rangle }(\R^{n}, 1)) 
\subset B(L^{2} \to L^{2}) 
\end{equation}
(see \cite{CV}). 
In the bilinear case, $N=2$,
in the paper \cite{MT-2013}, 
the second and the third named authors 
of the present paper 
proved that 
\begin{equation}\label{bilinearL2L2h1bdd}
\op(S_{0,0}^{\langle -n/2 \rangle }(\R^{n},2)) 
\subset B(L^{2} \times L^{2} \to h^{1}) 
\end{equation}
and 
\begin{equation}\label{bilinearL2bmoL2bdd}
\op(S_{0,0}^{\langle -n/2 \rangle }(\R^{n},2)) 
\subset B(L^{2} \times bmo \to L^{2}), 
\end{equation}
where $h^1$ is the local Hardy space and 
$bmo$ is the local $BMO$ space. 
By interpolation, these imply that  
\begin{equation}\label{bilinearLpbdd}
\op(S_{0,0}^{\langle -n/2 \rangle }(\R^{n},2)) 
\subset B(L^{q_1} \times L^{q_2} \to L^{r})
\end{equation}
for $1 \leq r \leq 2 \leq q_1,q_2 \leq \infty $ with $1/q_1+1/q_2=1/r$. 
In \cite{MT-2013}, 
it is also proved 
that $m=-n/2$ is the critical number in 
\eqref{bilinearL2L2h1bdd}, 
\eqref{bilinearL2bmoL2bdd}, 
and 
\eqref{bilinearLpbdd}. 
In the $N$-fold multilinear case, $N \geq 3$,
Michalowski--Rule--Staubach \cite[Theorem 3.3]{MRS-2014}
proved that
\begin{equation}\label{multilinearLpbdd}
\op(S_{0,0}^{\langle m\rangle }(\R^{n},N)) 
\subset B(L^{q_1} \times \dots \times L^{q_N} \to L^{r})
\quad\textrm{with}\quad
m<-nN/2
\end{equation}
holds for $2 \leq q_1,\dots, q_N \leq \infty$ and $2/N \leq r \leq \infty$ with $1/q_1+\dots+1/q_N=1/r$.

Quite recently, the authors \cite{KMT-arXiv} 
introduced the 
symbol class $S_{0,0}^{W}(\R^n, 2)$ of 
Definition \ref{BSW00} and, 
under certain condition on the weight $W$,  
proved that the bilinear operators 
$T_{\sigma}$ with symbols in the class 
$S_{0,0}^{W}(\R^n, 2)$ 
are bounded from $L^2 \times L^2$ to the 
$L^2$-based amalgam space $(L^2,\ell^1)$.
For the definition of the amalgam spaces,
see Section \ref{secamalgam}.
This result improves \eqref{bilinearL2L2h1bdd}  
since 
the general symbol class  $S_{0,0}^{W}(\R^n, 2)$ 
of \cite{KMT-arXiv} can be wider than the bilinear 
H\"ormander class 
$S_{0,0}^{\langle -n/2 \rangle }(\R^n,2)$ and 
the target space 
$(L^2,\ell^1)$ is 
continuously embedded into $h^1$.

In the present paper, 
we will consider 
the multilinear pseudo-differential operators with 
symbols in the class $S_{0,0}^{W}(\R^n, N)$ 
and, under certain condition on the weight $W$, 
prove the boundedness of the corresponding 
operators from the product of the amalgam spaces
$(L^2,\ell^{q_1}) \times \dots \times (L^2,\ell^{q_N})$ 
to $(L^2,\ell^{r})$. 
Our result extends the result of \cite{KMT-arXiv}
in two ways. 
Firstly, we generalize the result for 
the bilinear pseudo-differential operators 
to the case of $N$-fold multilinear operators.
Secondly, 
we deal with the $L^2$-based amalgam spaces
not only as the target space but also as the domain space. 
Our result also generalizes or improves 
the results stated in
\eqref{linearLpbdd}--\eqref{multilinearLpbdd}; 
for this see the next to the last paragraph 
in this section.

Now we shall state our main result more precisely. 
We use the class of weight functions defined below. 
This is originally introduced in 
\cite[Definition 1.2]{KMT-arXiv} 
in the case $N=2$.
The definition below covers all $N\geq 1$.

\begin{defn}\label{classB}
We denote by 
$\calB_N (\ZnN)$ 
the set of 
all those nonnegative functions 
$V$ on $\ZnN$  
for which there exists a constant $c\in (0, \infty)$ 
such that the inequality 
\begin{equation}\label{BL222}
\sum_{\nu_1, \dots, \nu_N \in \Z^n} 
V(\nu_1, \dots, \nu_N) 
A_0(\nu_1+\dots+ \nu_N) 
\prod_{j=1}^N A_j(\nu_j)
\le c \prod_{j=0}^N
\|A_j\|_{\ell^2 (\Z^n) } 
\end{equation}
holds for all nonnegative 
functions $A_j$, $j=0,1,\dots,N$, on $\Z^n$. 
\end{defn}

By using the class above, 
the main theorem of this paper is
stated as follows. 

\begin{thm}\label{main-thm-1}
Let $V$ be a nonnegative bounded function 
on $\ZnN$ 
and let 
\[
\widetilde{V} (\xi_1, \dots, \xi_N)
=
\sum_{\nu_1, \dots, \nu_N \in \Z^n} 
V(\nu_1, \dots, \nu_N) 
\prod_{j=1}^N \ichi_{Q} (\xi_j - \nu_j) ,
\quad 
(\xi_1, \dots, \xi_N) \in (\R^n)^N,  
\]
where $Q=[-1/2, 1/2)^n$. 
Then the following hold. 
\\
$(1)$ 
If there exist $q_{j}, r \in (0, \infty]$, $j=1,\dots,N$, such that 
all $T_{\sigma} \in \mathrm{Op}
(S^{\widetilde{V}}_{0,0} (\R^n, N))$ 
are 
bounded from 
$(L^2,\ell^{q_1}) \times \dots \times (L^2,\ell^{q_N})$ 
to $(L^2,\ell^{r})$, 
then $V \in \calB_N (\ZnN)$. 
\\
$(2)$ 
Conversely, 
if $V \in \calB_N (\ZnN)$, 
then 
all 
$T_{\sigma} \in \mathrm{Op}
(S^{\widetilde{V}}_{0,0} (\R^n, N))$ 
are 
bounded from 
$(L^2,\ell^{q_1}) \times \dots \times (L^2,\ell^{q_N})$ 
to $(L^2,\ell^{r})$ 
for $q_{j}, r \in (0, \infty]$, $j=1,\dots,N$,
satisfying $1/q_1+\dots+1/q_N\ge 1/r$. 
\end{thm}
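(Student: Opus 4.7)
I would treat the two parts separately, by dual methods.

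For Part $(1)$ (necessity), I would construct a test symbol tuned to $V$ and feed it test functions that isolate the $\calB_N$ inequality. Fix a Gaussian-like smooth bump $\phi$ effectively supported in $Q$ and set
\[
\sigma(x,\xi_1,\ldots,\xi_N)=\sum_{\nu\in\ZnN} V(\nu)\prod_{j=1}^N\phi(\xi_j-\nu_j);
\]
since $\widetilde V$ majorizes $V(\nu)$ on the support of each bump product, $\sigma\in S^{\widetilde V}_{0,0}(\R^n,N)$. For arbitrary finite nonnegative sequences $A_0,A_1,\ldots,A_N$ on $\Z^n$, I would use
\[
\widehat{f_j}(\xi)=\sum_{\nu_j}A_j(\nu_j)\phi(\xi-\nu_j),\qquad \widehat{g}(\eta)=\sum_\mu A_0(\mu)\phi(\eta-\mu).
\]
Because the Fourier supports consist of unit-scale cubes, Plancherel combined with the Schwartz decay of $\phi^{\vee}$ gives $\|f_j\|_{(L^2,\ell^{q_j})}\le C\|A_j\|_{\ell^2}$ and $\|g\|_{(L^2,\ell^{r'})}\le C\|A_0\|_{\ell^2}$, with a constant independent of $q_j$ and $r$. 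Computing $\langle T_\sigma(f_1,\ldots,f_N),g\rangle$ directly, the non-overlapping bump supports on the $\xi_j$ side kill all off-diagonal terms in the sum over $\nu_j$'s, and with $\Phi=(\phi^2)^\vee$ the remaining $x$-integral reduces to a Schwartz function evaluated at $\mu-(\nu_1+\cdots+\nu_N)$. The Gaussian choice makes this function nonnegative with positive value at the origin, so $\langle T_\sigma(f),g\rangle \ge c\sum_\nu V(\nu)\,A_0(\nu_1+\cdots+\nu_N)\prod_j A_j(\nu_j)$; the hypothesized boundedness then gives exactly the inequality of Definition \ref{classB}.

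For Part $(2)$ (sufficiency), I would first reduce to the boundary case $1/q_1+\cdots+1/q_N=1/r$ via the embeddings $(L^2,\ell^q)\hookrightarrow(L^2,\ell^{q'})$ for $q\le q'$, and then do a frequency-side decomposition of the symbol. Choose a smooth $\chi$ slightly thicker than $\ichi_Q$ with $\sum_{\nu_j}\chi(\xi_j-\nu_j)=1$ and set
\[
\sigma_\nu(x,\xi)=\sigma(x,\xi)\prod_j\chi(\xi_j-\nu_j),\qquad \sigma=\sum_\nu\sigma_\nu.
\]
By the symbol class definition and the explicit form of $\widetilde V$, $|\sigma_\nu|\le C V(\nu)$ up to absorbing finitely many neighbours, and repeated integration by parts in each $\xi_j$ produces the pointwise kernel bound
\[
|K_{\sigma_\nu}(x,y_1,\ldots,y_N)|\le C_M\,V(\nu)\prod_{j=1}^N(1+|x-y_j|)^{-M}
\]
for every $M$. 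Frequency-localizing $f_j=\sum_{\nu_j}f_{j,\nu_j}$ and dualizing against $g=\sum_\mu g_\mu\in(L^2,\ell^{r'})$, Cauchy--Schwarz on the $N+1$ frequency integrals separates the variables and reduces $|\langle T_\sigma(f),g\rangle|$ to a bound of the form $\sum_\nu V(\nu)\widetilde A_0(\nu_1+\cdots+\nu_N)\prod_j A_j(\nu_j)$ with $A_j(\nu_j)=\|f_{j,\nu_j}\|_{L^2}$ and $\widetilde A_0$ a rapidly-decaying average of $\|g_\mu\|_{L^2}$ in $\mu$. Applying Definition \ref{classB} to this sum and using Plancherel on the unit-scale Fourier cubes closes the $L^2$-endpoint.

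\textit{Anticipated main obstacle.} The frequency decomposition above cleanly delivers the $L^2\times\cdots\times L^2\to L^2$ case. To reach all $(q_1,\ldots,q_N;r)$ with $1/q_1+\cdots+1/q_N\ge 1/r$, one must replace every $\|\cdot\|_{L^2}$ by local amalgam pieces indexed by physical cubes $Q+k$, apply $\calB_N$ \emph{uniformly in} $k$, and then execute a H\"older step across $k\in\Z^n$ that produces the asymmetric $\ell^{q_j}$ and $\ell^{r'}$ amalgam norms from the $\ell^2$ data supplied by $\calB_N$. The delicate point---genuinely harder than the bilinear $L^2\times L^2\to(L^2,\ell^1)$ result of \cite{KMT-arXiv}, where no amalgam inputs appear---is propagating the kernel decay in $x-y_j$ through simultaneous physical- and frequency-side localizations of each $f_j$ without destroying summability, likely by separating the regimes $q_j\le 2$ and $q_j\ge 2$.
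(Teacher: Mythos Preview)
Your outline for Part~(1) is essentially the same as the paper's: construct an $x$-independent symbol built from $V$-weighted bumps, feed it test functions whose Fourier transforms are $A_j$-weighted bump trains, and read off the $\calB_N$ inequality. One caveat: you rely on the dual pairing $\langle T_\sigma(f),g\rangle$ and the norm $\|g\|_{(L^2,\ell^{r'})}$, but the hypothesis allows $r\in(0,1]$, where $(L^2,\ell^r)$ has no predual and $r'$ is undefined. The paper sidesteps this by computing $\|T_\sigma(f_1,\ldots,f_N)\|_{(L^2,\ell^r)}$ directly: the output is of the form $\sum_k d_k e^{ik\cdot x}\calF^{-1}\varphi(x)^N$ with $d_k=\sum_{\nu_1+\cdots+\nu_N=k}V(\nu)\prod_j A_j(\nu_j)$, and periodicity plus Parseval give $\|T_\sigma(f)\|_{(L^2,\ell^r)}\approx\|d_k\|_{\ell^2_k}$ for \emph{every} $r$. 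This yields the dual form of the $\calB_N$ inequality without invoking duality of the amalgam scale.

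For Part~(2), the genuine content of the paper is precisely what you label the ``anticipated main obstacle'' and leave unresolved. Your kernel-bound-plus-Cauchy--Schwarz argument with $A_j(\nu_j)=\|f_{j,\nu_j}\|_{L^2}$ closes only the $L^2\times\cdots\times L^2\to L^2$ endpoint, and the suggestion to ``apply $\calB_N$ uniformly in $k$ and then H\"older'' is not enough of a plan: the difficulty is that the $\calB_N$ inequality supplies only $\ell^2$ control of the $A_j$, and one must somehow upgrade this to $\ell^{q_j}$ amalgam control without a black-box interpolation step. The paper's mechanism is a specific Boulkhemair--Sugimoto decomposition (Lemma~\ref{unifdecom}) writing $1=\sum_\nu\kappa(\xi-\nu)\chi(\xi-\nu)$ with $\widehat\chi$ compactly supported; this forces each piece $\theta(\cdot-\mu)T_{\sigma_\nu}(\square_{\nu_1}f_1,\ldots,\square_{\nu_N}f_N)$ to have Fourier support in a ball of fixed radius around $\nu_1+\cdots+\nu_N$, so one can insert a frequency projection $\varphi((D+\nu_1+\cdots+\nu_N)/R_0)$ on the dual function $g$. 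After discretizing physical space by $\nu_0\in\Z^n$, one applies the $\calB_N$ inequality at each $\nu_0$ to functions $A_j(\nu_j,\nu_0)=\{S(\ichi_{B_{2R_j}}\ast|\square_{\nu_j}f_j|^2)(\nu_0)\}^{1/2}$, then H\"older in $\nu_0$ with the asymmetric split $1=\tfrac12+\sum_{j=1}^N\tfrac{1}{2N}$ sends the $g$-factor to $\ell^2_{\nu_0}$ (Plancherel) and each $f_j$-factor to $\ell^{2N}_{\nu_0}$, after which the weight $\langle\nu_0-\mu\rangle^{-L/N}$ is summed in $\ell^{q_j}_\mu$. There is no case split on $q_j\le 2$ versus $q_j\ge 2$; the argument is uniform. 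Without a concrete substitute for this machinery, your proposal for Part~(2) is a correct identification of the strategy but not yet a proof.
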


As we mentioned above, 
this theorem generalizes 
\cite[Theorem 1.3]{KMT-arXiv}, 
where the case 
$N=2$, $q_1=q_2=2$, and $r=1$ was considered.

Here are some typical examples of functions in 
the class $\calB_N (\ZnN)$.

\begin{example}\label{example-of-V}
The following functions $V$ on 
$\ZnN$ belong to 
the class $\calB_N (\ZnN)$: 
\begin{align}
&\label{V-n/2}
V(\nu_1, \dots, \nu_N)= (1 + |\nu_1|+ \cdots + |\nu_N|)^{-(N-1)n/2};   
\\
&\label{Vproduct2}
V(\nu_1, \dots, \nu_N) = 
\prod_{j=1}^N (1+ |\nu_j| )^{-a_{j}}, 
\quad 
0< a_j < n/2, \quad \sum_{j=1}^N a_j=(N-1)n/2;  
\end{align}
where 
$\nu_j  \in \Z^n$, 
$j=1, \dots, N$, 
and we assume $N \ge 2$ in \eqref{Vproduct2}. 
\end{example}

When $N=1$, the function of 
\eqref{V-n/2} is the constant function 
$V(\nu_1)=1$  
and for this weight function 
the class $S_{0,0}^{\widetilde V}(\R^n,1)$ 
is identical with $S_{0,0}^{\langle 0 \rangle } (\R^n, 1)$. 
The class $S_{0,0}^{\widetilde V}(\R^n, N)$ 
with $V$ of \eqref{V-n/2}
is equal to the class 
$S_{0,0}^{\langle -(N-1)n/2 \rangle } (\R^n,N)$.  
Notice that 
the class $S_{0,0}^{\widetilde V}(\R^n, N)$ 
with $V$ of \eqref{Vproduct2} 
is wider than 
$S_{0,0}^{\langle -(N-1)n/2 \rangle } (\R^n,N)$ 
if $N \geq 2$.
More generally, in Proposition \ref{weightLweak}, 
we will prove that all nonnegative 
functions in the Lorentz space 
$\ell^{\frac{2N}{N-1}, \infty}(\Z^{Nn})$
belong to $\calB_N (\ZnN)$. 
In Proposition \ref{productLweak}, 
we also prove that $\calB_N (\ZnN)$ contains
functions generalizing \eqref{Vproduct2}.

To conclude the overview of our result, 
we shall briefly give comments on comparison between
Theorem \ref{main-thm-1} 
and the results 
\eqref{linearLpbdd}--\eqref{multilinearLpbdd}. 
When $N=1$, the boundedness of 
Theorem \ref{main-thm-1} (2) 
for $V(\nu_1)=1$ and $q_1=r=2$ 
coincides with 
\eqref{linearLpbdd}, 
since $(L^2,\ell^2)=L^2$. 
When $N \geq 2$, 
Theorem \ref{main-thm-1} (2) 
for $V$ of \eqref{V-n/2}, 
combined with 
the embeddings 
$L^{q} \hookrightarrow (L^2,\ell^q)$
for $2 \le q \le \infty$, 
$bmo \hookrightarrow (L^2,\ell^{\infty})$,
and $(L^2,\ell^r) \hookrightarrow h^r$ for $0 < r \leq 2$ 
(see Section \ref{secamalgam}), 
implies that
\begin{equation*}
\op
(S^{\langle  - (N-1)n/2 \rangle }_{0,0} (\R^n, N))
\subset B(L^{q_1} \times \dots \times 
L^{q_N} \to h^{r})
\end{equation*}
for 
$2/N \leq r \leq 2 \leq q_{1},\dots,q_{N} \leq \infty $
and $1/q_1+\cdots+1/q_{N}=1/r$,
where $L^{q_j}$ can be replaced by 
$bmo$ for $q_j=\infty$. 
Thus, under these conditions of $q_j$ and $r$,
Theorem \ref{main-thm-1}
covers 
\eqref{bilinearL2L2h1bdd}, 
\eqref{bilinearL2bmoL2bdd}, \eqref{bilinearLpbdd}, 
and
\eqref{multilinearLpbdd}.

We end this section by mentioning the plan of this paper.
In Section \ref{section2}, 
we will give the basic notations used throughout this paper 
and recall the definitions and properties of some function spaces.
In Section \ref{sectionWeight}, 
we give several properties of the class 
$\calB_N (\ZnN)$
and prove 
that it contains 
the functions $V$ of Example \ref{example-of-V}.  
After we prepare several lemmas in 
Section \ref{sectionLemma}, 
we prove Theorem \ref{main-thm-1} in Section \ref{sectionMain}. 
In Section \ref{sectionMain}, we also 
give a theorem, Theorem \ref{main-thm-2}, 
which is concerned with 
symbols satisfying low regularity condition. 
In Section \ref{sectionBddinLp},
we show another theorem, 
Theorem \ref{main-thm-3},
which is concerned with 
symbols with low regularity 
and boundedness of the operators 
in the $L^p$ framework.
In Section \ref{sectionSharpness},
we prove the sharpness of Theorem \ref{main-thm-3}.


\section{Preliminaries}\label{section2}

\subsection{Basic notations} 

We collect notations which will be used throughout this paper.
We denote by $\R$, $\Z$, $\N$, and $\N_0$
the sets of real numbers, integers, positive integers, 
and nonnegative integers, respectively. 
We denote by $Q$ the $n$-dimensional 
unit cube $[-1/2,1/2)^n$.
The cubes $\tau + Q$, $\tau \in \Z^n$, are mutually 
disjoint and constitute a partition of the 
Euclidean space $\R^n$.
This implies integral of a function on $\R^{n}$ 
can be written as 
\begin{equation}\label{cubicdiscretization}
\int_{\R^n} f(x)\, dx = 
\sum_{\tau \in \Z^{n}} \int_{Q} f(x+ \tau)\, dx. 
\end{equation}
We denote by $B_R$ the closed ball in $\R^n$
of radius $R>0$ centered at the origin.
For $1 \leq p \leq \infty$, $p^\prime$ is 
the conjugate number of $p$ defined by 
$1/p + 1/p^\prime =1$.
We write 
$[s] = \max\{ n \in \Z : n \leq s \}$ for $s \in \R$. 
For $x\in \R^d$, we write 
$\langle x \rangle = (1 + | x |^2)^{1/2}$. 
Thus, for $N\geq1$, $\langle (x_1,\dots,x_N) \rangle = (1+|x_1|^2+\dots+|x_N|^2)^{1/2}$ 
for $ (x_1,\dots,x_N) \in (\R^n)^N$.

For two nonnegative functions $A(x)$ and $B(x)$ defined 
on a set $X$, 
we write $A(x) \lesssim B(x)$ for $x\in X$ to mean that 
there exists a positive constant $C$ such that 
$A(x) \le CB(x)$ for all $x\in X$. 
We often omit to mention the set $X$ when it is 
obviously recognized.  
Also $A(x) \approx B(x)$ means that
$A(x) \lesssim B(x)$ and $B(x) \lesssim A(x)$.

We denote the Schwartz space of rapidly 
decreasing smooth functions
on $\R^d$ 
by $\calS (\R^d)$ 
and its dual,
the space of tempered distributions, 
by $\calS^\prime(\R^d)$. 
The Fourier transform and the inverse 
Fourier transform of $f \in \calS(\R^d)$ are given by
\begin{align*}
\mathcal{F} f  (\xi) 
&= \widehat {f} (\xi) 
= \int_{\R^d}  e^{-i x \cdot \xi} f(x) \, dx, 
\\
\mathcal{F}^{-1} f (x) 
&= \check f (x)
= \frac{1}{(2\pi)^d} \int_{\R^d}  e^{i x \cdot \xi } f( \xi ) \, d\xi,
\end{align*}
respectively. 
We also deal with the partial Fourier transform 
of a Schwartz function $f (x,\xi_{1},\dots,\xi_{N})$,  
$x,\xi_{1},\dots,\xi_{N} \in\R^{n}$. 
In this case, we denote the partial Fourier transform 
with respect to the $x$ and $\xi_{j}$ variables 
by $\calF_0$ and $\calF_{j}$, $j=1,\dots,N$, respectively. 
We also write the Fourier transform on 
$(\R^{n})^{N}$ for the $\xi_{1},\dots, \xi_{N}$ variables as
$\calF_{1,\dots,N} = \calF_1 \cdots \calF_{N}$.
For $m \in \calS^\prime (\R^n)$, 
the Fourier multiplier operator is defined by
\begin{equation*}
m(D) f 
=
\mathcal{F}^{-1} \left[m \cdot \mathcal{F} f \right].
\end{equation*}
We also use the notation $(m(D)f)(x)=m(D_x)f(x)$ 
when we indicate which variable is considered.

For a measurable subset $E \subset \R^d$, 
the Lebesgue space $L^p (E)$, $0<p\le \infty$, 
is the set of all those 
measurable functions $f$ on $E$ such that 
$\| f \|_{L^p(E)} = 
\left( \int_{E} \big| f(x) \big|^p \, dx \right)^{1/p} 
< \infty 
$
if $0< p < \infty$ 
or   
$\| f \|_{L^\infty (E)} 
= 
\operatorname{ess \, sup}_{x\in E} |f(x)|
< \infty$ if $p = \infty$. 
We also use the notation 
$\| f \|_{L^p(E)} = \| f(x) \|_{L^p_{x}(E)} $ 
when we want to indicate the variable explicitly.

The uniformly local $L^2$ space, denoted by 
$L^2_{ul} (\R^d)$, consists of 
all those measurable functions $f$ on 
$\R^d$ such that 
\begin{equation*}
\| f \|_{L^2_{ul} (\R^d) } 
= \sup_{\nu \in \Z^d}
\left( \int_{[-1/2, 1/2)^d} 
\big| f(x+\nu) 
\big|^2 \, dx 
\right)^{1/2}
< \infty 
\end{equation*}
(this notion can be found in 
\cite[Definition 2.3]{kato 1975}).

Let $\K$ be a countable set. 
We define the sequence spaces 
$\ell^q (\K)$ and $\ell^{q, \infty} (\K)$ 
as follows.  
The space $\ell^q (\K)$, $0< q \le \infty$, 
consists of all those 
complex sequences $a=\{a_k\}_{k\in \K}$ 
such that 
$ \| a \|_{ \ell^q (\K)} 
= 
\left( \sum_{ k \in \K } 
| a_k |^q \right)^{ 1/q } <\infty$ 
if $0< q < \infty$  
or 
$\| a \|_{ \ell^\infty (\K)} 
= \sup_{k \in \K} |a_k| < \infty$ 
if $q = \infty$.
For $0< q<\infty$, 
the space 
$\ell^{ q,\infty }(\K)$ is 
the set of all those complex 
sequences 
$a= \{ a_k \}_{ k \in \K }$ such that
\begin{equation*}
\| a \|_{\ell^{q,\infty} (\K) } 
=\sup_{t>0} 
\big\{ t\, 
\sharp \big( \{ k \in \K : | a_k | > t \} 
\big)^{1/q} \big\} < \infty,
\end{equation*}
where $\sharp$ denotes the cardinality 
of a set. 
Sometimes we write 
$\| a \|_{\ell^{q}} = 
\| a_k \|_{\ell^{q}_k }$ or 
$\| a \|_{\ell^{q,\infty}} = 
\| a_k \|_{\ell^{q,\infty}_k }$.  
If $\K=\Z^{n}$, 
we usually write $ \ell^q $ or $\ell^{ q, \infty }$ 
for 
$\ell^q (\Z^{n})$  
or $\ell^{q, \infty} (\Z^{n})$.

Let $X,Y,Z$ be function spaces.  
We denote the mixed norm by
\begin{equation*}
\label{normXYZ}
\| f (x,y,z) \|_{ X_x Y_y Z_z } 
= 
\bigg\| \big\| \| f (x,y,z) 
\|_{ X_x } \big\|_{ Y_y } \bigg\|_{ Z_z }.
\end{equation*} 
(Here pay special attention to the order 
of taking norms.)  
We shall use these mixed norms for  
$X, Y, Z$ being $L^p$ or $\ell^p$. 
The inequality
\begin{equation}\label{mixedtriangle}
\| f(\tau,\nu) \|_{\ell_{\tau}^p \ell_{\nu}^q }
\leq
\left\| \left\| f(\tau,\nu) \right\|_{\ell_{\nu}^q } \right\|_{\ell_{\tau}^{\min(p,q)}},
\quad 0 < p,q \leq \infty,
\end{equation}
will be often used,
and this can be proved by the inequality 
$\|\cdot \|_{\ell^p} \le \|\cdot \|_{\ell^q}$ for 
the case $p \ge q$
and the Minkowski inequality for the case $p \le q$.
It is worth mentioning that this inequality is incorrect for the $L^p$ case in general.

\subsection{Local Hardy spaces $h^p$ and the space $bmo$} 
\label{sechardybmo}

We recall the definition of the local Hardy spaces 
$h^p(\R^n)$, $0<p\leq\infty$, and the space $bmo(\R^n)$.

Let $\phi \in \calS(\R^n)$ be such that
$\int_{\R^n}\phi(x)\, dx \neq 0$. 
Then, the local Hardy space $h^p(\R^n)$ 
consists of
all $f \in \calS'(\R^n)$ such that 
$\|f\|_{h^p}=\|\sup_{0<t<1}|\phi_t*f|\|_{L^p}
<\infty$,  
where $\phi_t(x)=t^{-n}\phi(x/t)$.
It is known that $h^p(\R^n)$
does not depend on the choice of the function $\phi$,
and that $h^p(\R^n) = L^p(\R^n)$ for $1 < p \leq \infty$ 
and especially $h^1(\R^n) \hookrightarrow L^1(\R^n)$.

The space $bmo(\R^n)$ consists of
all locally integrable functions $f$ on $\R^n$ 
such that
\[
\|f\|_{bmo}
=\sup_{|R| \le 1}\frac{1}{|R|}
\int_{R}|f(x)-f_R|\, dx
+\sup_{|R|\geq1}\frac{1}{|R|}
\int_R |f(x)|\, dx
<\infty,
\]
where $f_R=|R|^{-1}\int_R f(x) \, dx$,
and $R$ ranges over the cubes in $\R^n$.

It is known that the dual space of $h^1(\R^n)$ is $bmo(\R^n)$.
See Goldberg \cite{goldberg 1979} for more details about 
$h^p$ and $bmo$. 

\subsection{Amalgam spaces} 
\label{secamalgam}

For $0 < p,q \leq \infty$, 
the amalgam space 
$(L^p,\ell^q)(\R^n)$ is 
defined to be the set of all 
those measurable functions $f$ on 
$\R^n$ such that 
\begin{equation*}
\| f \|_{ (L^p,\ell^q) (\R^n)} 
=
 \| f(x+\nu) \|_{L^p_x (Q) \ell^q_{\nu}(\Z^n) }
=\left\{ \sum_{\nu \in \Z^n}
\left( \int_{Q} \big| f(x+\nu) \big|^p \, dx 
\right)^{q/p} \right\}^{1/q} 
< \infty  
\end{equation*}
with usual modification when $p$ or $q$ is infinity.  
Obviously, $(L^p,\ell^p) = L^p$ 
and $(L^2, \ell^\infty) = L^2_{ul}$. 
For $1\leq p,q < \infty$, 
the duality $(L^p,\ell^q)^\ast =  (L^{p'},\ell^{q'}) $ holds.
If $p_1 \geq p_2$ and $q_1 \leq q_2$, 
then 
$(L^{p_1}, \ell^{q_1}) 
\hookrightarrow (L^{p_2},\ell^{q_2}) $.
In particular, $L^{q} \hookrightarrow (L^2,\ell^q)$
for $2 \le q \le \infty$,
$bmo \hookrightarrow (L^2,\ell^{\infty})$,
and $(L^2,\ell^q) \hookrightarrow h^q$ for $0 < q \leq 2$.

Here we prove the last embedding 
(the others are obvious).
In the definition of $h^q$, we choose 
$\phi \in \calS(\R^n)$ 
satisfying that $\supp \phi \subset [-1,1]^n$.
Then, by using $L^2(\R^n)\hookrightarrow L^2(Q)\hookrightarrow L^q(Q)$, we have 
\begin{align*}
\| f \|_{h^q} 
&=
\left\| \sup_{0<t<1} \left| \phi_{t} \ast (f \, \ichi_{\nu+3Q})(x+\nu)\right|
\right\|_{L^q_x(Q) \ell^q_{\nu}}
\\
&\lesssim
\left\| M (f \, \ichi_{\nu+3Q})(x) \right\|_{L^2_x(\R^n) \ell^q_{\nu}}
\lesssim
\left\| f \, \ichi_{\nu+3Q}\right\|_{L^2(\R^n) \ell^q_{\nu}}
\approx 
\left\| f \right\|_{(L^2, \ell^q)},
\end{align*}
where $M$ is the Hardy--Littlewood maximal operator
and $3Q=[-3/2,3/2)^n$.

See Fournier--Stewart \cite{fournier stewart 1985} 
and Holland \cite{holland 1975} for more 
properties of amalgam spaces.

We end this subsection with noting the following lemma.

\begin{lem}\label{lemequivalentamalgam}
Let $p,q \in (0,\infty]$. 
If $L> n/ \min (p,q)$ and if 
$g$ is a measuarable function on $\R^n$ such that 
\begin{equation}\label{assumptiong}
c \ichi_{Q} (x) \le |g(x)| \le c^{-1} \langle x \rangle ^{-L} 
\end{equation}
with some positive constant $c$, then 
\begin{equation}
\label{equivalentamalgam}
\| f \|_{ (L^p,\ell^q) (\R^{n})} 
\approx 
\left\| 
g(x-\nu ) f(x) \right\|_{L^p_x(\R^{n}) \ell^q_{\nu}(\Z^{n}) }.
\end{equation}
\end{lem}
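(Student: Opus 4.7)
The plan is to compare $\|g(x-\nu)f(x)\|_{L^p_x(\R^n)\ell^q_\nu(\Z^n)}$ directly with the standard amalgam norm $\|f\|_{(L^p,\ell^q)}=\|\ichi_Q(x-\nu)f(x)\|_{L^p_x\ell^q_\nu}$ by exploiting the two-sided hypothesis \eqref{assumptiong}. The lower bound in \eqref{equivalentamalgam} is immediate: from $|g(x-\nu)f(x)|\ge c\,\ichi_Q(x-\nu)|f(x)|$ one gets $\|g(x-\nu)f(x)\|_{L^p_x\ell^q_\nu}\ge c\|f\|_{(L^p,\ell^q)}$ by taking the $L^p_x\ell^q_\nu$-norm of both sides. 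So all of the work is in the upper bound, where the pointwise bound $|g(x)|\le c^{-1}\langle x\rangle^{-L}$ reduces the claim to
\[
\bigl\|\langle x-\nu\rangle^{-L} f(x) \bigr\|_{L^p_x(\R^n)\ell^q_\nu(\Z^n)}
\lesssim \|f\|_{(L^p,\ell^q)}.
\]

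My first step is to partition $\R^n=\bigsqcup_{\tau\in\Z^n}(\tau+Q)$ as in \eqref{cubicdiscretization} and use the elementary fact that $\langle x-\nu\rangle\approx\langle\tau-\nu\rangle$ whenever $x\in\tau+Q$, uniformly in $\tau,\nu\in\Z^n$. Writing $F(\tau)=\|f\|_{L^p(\tau+Q)}$, so that $\|F\|_{\ell^q(\Z^n)}=\|f\|_{(L^p,\ell^q)}$, this turns the problem (in the case $p<\infty$) into the discrete convolution estimate
\[
\|w\ast F^p\|_{\ell^{q/p}(\Z^n)}\lesssim \|F^p\|_{\ell^{q/p}(\Z^n)},\qquad w(\tau)=\langle\tau\rangle^{-Lp}.
\]
The boundary cases $p=\infty$ and $q=\infty$ produce analogous inequalities with a supremum in place of the convolution and are handled in the same spirit, using $\sup_\tau a_\tau\le(\sum_\tau a_\tau^q)^{1/q}$ when $q<\infty$ and a trivial pointwise bound when $q=\infty$.

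The final step is to verify the convolution inequality, and this is where the only real subtlety lies: the natural tool depends on the sign of $q-p$. If $q\ge p$, so $q/p\ge 1$, I would invoke Young's inequality $\|w\ast F^p\|_{\ell^{q/p}}\le\|w\|_{\ell^1}\|F^p\|_{\ell^{q/p}}$, and $w\in\ell^1$ is equivalent to $Lp>n$, i.e.\ $L>n/p=n/\min(p,q)$. If $q<p$, so $s:=q/p<1$, then Young is unavailable and I would instead apply the quasi-triangle inequality $\|\sum_\tau a_\tau\|_{\ell^s}^s\le\sum_\tau\|a_\tau\|_{\ell^s}^s$ to $a_\tau(\nu)=w(\nu-\tau)F(\tau)^p$, producing $\|w\ast F^p\|_{\ell^s}^s\le\|w\|_{\ell^s}^s\|F^p\|_{\ell^s}^s$; this time $w\in\ell^s$ is the requirement $Lps>n$, equivalently $L>n/q=n/\min(p,q)$. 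Thus the single hypothesis $L>n/\min(p,q)$ is precisely what makes both subcases go through, and I expect the case split in this last step — rather than the geometric reduction — to be the main (modest) obstacle.
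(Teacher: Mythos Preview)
Your proof is correct and follows essentially the same route as the paper: both obtain the lower bound immediately from $c\,\ichi_Q\le|g|$, and for the upper bound both partition $\R^n$ into the cubes $\tau+Q$, replace $\langle x-\nu\rangle^{-L}$ by $\langle\tau-\nu\rangle^{-L}$, and reduce to a discrete weighted estimate requiring $\langle\tau\rangle^{-L}\in\ell^{\min(p,q)}$. The only cosmetic difference is that the paper invokes the pre-packaged mixed-norm inequality \eqref{mixedtriangle} (which already contains the Minkowski/embedding dichotomy), whereas you unpack that same dichotomy explicitly as Young's inequality versus the $\ell^s$ quasi-triangle inequality for $s=q/p<1$.
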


\begin{proof}
The inequality $\lesssim$ in 
\eqref{equivalentamalgam} is obvious from the 
former inequality of \eqref{assumptiong}.  
To show the inequality $\gtrsim$ in 
\eqref{equivalentamalgam}, 
we use the formula \eqref{cubicdiscretization} 
to write 
the right hand side of \eqref{equivalentamalgam} 
as 
\begin{equation}
\label{aaa}
\left\| g( x-\nu )  
f(x) \right\|_{L^p_x(\R^{n}) \ell^q_{\nu} }
=
\left\| g (x+\tau -\nu )  
f(x+\tau) \right\|_{L^p_x(Q) \ell^p_\tau \ell^q_{\nu} }. 
\end{equation}
Using the latter inequality of \eqref{assumptiong}, 
changing variables, and using 
the inequality \eqref{mixedtriangle}, we see that 
the right hand side of \eqref{aaa} is bounded by
\begin{align*}
&\left\| 
\langle x+ \tau - \nu \rangle ^{-L} 
f(x+\tau) 
\right\|
_{L^p_x(Q) \ell^p_\tau \ell^q_{\nu} }
\approx 
\left\| \langle \tau  \rangle ^{-L} 
f(x+\tau+\nu) 
\right\|
_{L^p_x(Q) \ell^p_{\tau} \ell^q_{\nu}} 
\\
&
\le 
\left\| \langle \tau \rangle^{-L} 
\| f(x+\tau+\nu) \|_{L^p_x(Q) \ell^q_{\nu}} 
\right\|
_{\ell^{\min(p,q)}_\tau }
\approx
\| f \|_{(L^p, \ell^q)}, 
\end{align*}
where the last $\approx$ holds 
because $\min (p,q) L>{n}$. 
\end{proof}

\section{Class $\calB_N$}
\label{sectionWeight}

In this section, we give several properties of the class 
$\calB_N (\ZnN)$ introduced in 
Definition \ref{classB}. 
We also introduce the class $\calM (\R^d)$, 
which will be used in the next section. 
For the case $N=2$, most of the results in this section 
are already given in \cite[Section 3]{KMT-arXiv}. 
The argument in this section is a modification 
of \cite[loc.\ cit.]{KMT-arXiv} to cover all 
$N\ge 1$. 

\begin{prop}\label{propertiesB}
\begin{enumerate}
\setlength{\itemindent}{0pt} 
\setlength{\itemsep}{3pt} 
\item 
Every function in the class 
$\calB_N (\ZnN)$ 
is bounded. 
\item 
If $N=1$, then 
$\calB_N (\ZnN)=\calB_1 (\Z^n) = \ell ^{\infty} (\Z^n)$.  
\item 
Let $N\ge 2$. 
A nonnegative function $V=V(\nu_1, \dots, \nu_N)$ 
on $\ZnN$ belongs to 
$\calB_N (\ZnN)$ if and only 
if each of
\[
V(-\nu_1, \dots, -\nu_{j-1}, \nu_1+\dots+\nu_N, -\nu_{j+1}, \dots, -\nu_N),
\quad j=1,\dots,N,
\]
belongs to $\calB_N (\ZnN)$. 
\item 
If $N \ge 2$, then the class $\calB_N (\ZnN)$ is not 
rearrangement invariant, 
i.e., 
there exists a function $V$ on $\ZnN$ 
and a bijection $\Phi : \ZnN \to \ZnN$ 
such that  
$V \in \calB_N (\ZnN)$ but 
$V\circ \Phi \not\in \calB_N (\ZnN)$. 

\item 
 Let $d, d' \in \N$, 
$V\in \calB_N ((\Z^{d})^N)$, 
and 
$V'\in \calB_N ((\Z^{d'})^N)$.  
Then 
the function 
\[
W((\mu_1, \mu'_1), \dots, (\mu_N, \mu'_N))  
= V (\mu_1, \dots, \mu_N) 
V^{\prime} (\mu'_1, \dots, \mu'_N)
\]
for $\mu_{j} \in \Z^{d}$, 
$\mu'_{j} \in \Z^{d^{\prime}}$,
$j=1, \dots, N$,
belongs to 
$\calB_N ((\Z^{ d+d^{\prime} })^N)$. 
\end{enumerate}
\end{prop}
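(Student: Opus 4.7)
My plan is to handle the five assertions in order of difficulty; the key idea will be concentrated in~(4).

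\emph{Parts~(1)--(3).} For~(1), I would specialize \eqref{BL222} by taking, for an arbitrary fixed $(\mu_1,\dots,\mu_N) \in \ZnN$, $A_j = \ichi_{\{\mu_j\}}$ for $j \ge 1$ and $A_0 = \ichi_{\{\mu_1+\dots+\mu_N\}}$; both sides collapse to scalars and yield $V(\mu_1,\dots,\mu_N) \le c$. For~(2), one direction is~(1); the converse follows from Cauchy--Schwarz:
\[
\sum_{\nu \in \Z^n} V(\nu) A_0(\nu) A_1(\nu) \le \|V\|_{\ell^\infty} \|A_0\|_{\ell^2} \|A_1\|_{\ell^2}.
\]
For~(3), since $\calB_N$ is invariant under permutations of the coordinates $\nu_1,\dots,\nu_N$ (easily seen by relabeling the $A_j$ and using $\prod \|A_j\|_{\ell^2}$ symmetry), it suffices to treat $j=1$. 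Setting $W(\mu) = V(\mu_1+\dots+\mu_N, -\mu_2, \dots, -\mu_N)$, the linear change of variables $\nu_1 = \mu_1+\dots+\mu_N$, $\nu_j = -\mu_j$ ($j \ge 2$) has Jacobian $\pm 1$ and sends $\mu_1+\dots+\mu_N \mapsto \nu_1$, $\mu_1 \mapsto \nu_1+\dots+\nu_N$. It converts the defining sum for $W$ into the one for $V$ with the roles of $A_0$ and $A_1$ swapped and $A_j$ replaced by $A_j(-\cdot)$ for $j \ge 2$; invariance of $\|\cdot\|_{\ell^2}$ under swaps and reflections gives the equivalence.

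\emph{Part~(5).} I would use a slicing argument. Given nonnegative $A_0,\dots,A_N$ on $\Z^{d+d'}$, set
\[
B_j(\mu) = \bigl\| A_j(\mu, \cdot) \bigr\|_{\ell^2(\Z^{d'})}, \qquad \mu \in \Z^d.
\]
For each fixed $(\mu_1,\dots,\mu_N) \in (\Z^d)^N$, applying \eqref{BL222} for $V'$ to the slices $A_0((\mu_1+\dots+\mu_N, \cdot))$ and $A_j((\mu_j, \cdot))$ bounds the inner sum over $(\mu'_1,\dots,\mu'_N)$ by $c'\, B_0(\mu_1+\dots+\mu_N) \prod_j B_j(\mu_j)$. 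Applying \eqref{BL222} for $V$ to the remaining $\mu$-sum, combined with the Fubini identity $\|B_j\|_{\ell^2(\Z^d)} = \|A_j\|_{\ell^2(\Z^{d+d'})}$, closes the argument.

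\emph{Part~(4) --- the main obstacle.} The conceptual point is that $\calB_N$ is sensitive to the geometric position of $\supp V$ relative to the constraint surface $\nu_1+\dots+\nu_N = \xi$ seen through $A_0$, whereas a rearrangement sees only the distribution of $V$. Since any two countably infinite subsets of $\ZnN$ admit a bijection between them extendable to a bijection of $\ZnN$, it suffices to exhibit indicator functions of two such sets, one in $\calB_N$ and one not. My candidate is $V = \ichi_E$ with $E = \{(\nu_1,\dots,\nu_N) \in \ZnN : \nu_1 = \dots = \nu_{N-1} = 0\}$: the defining sum collapses to $A_1(0) \cdots A_{N-1}(0) \sum_{\nu_N} A_0(\nu_N) A_N(\nu_N)$, which is dominated by $\prod_j \|A_j\|_{\ell^2}$ via Cauchy--Schwarz and the pointwise estimate $|f(0)| \le \|f\|_{\ell^2}$, so $V \in \calB_N$. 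For $V' = \ichi_{(\Z^n_{\ge 0})^N}$ (indicator of another countable set, hence having the same distribution as $V$), testing with $A_j = \ichi_{[0,M]^n}$ for $j \ge 1$ and $A_0 = \ichi_{[0,NM]^n}$ gives a left-hand side of order $M^{Nn}$ against a right-hand side of order $M^{n(N+1)/2}$; the ratio $M^{n(N-1)/2}$ diverges for $N \ge 2$, so $V' \notin \calB_N$. Any bijection $\Phi: \ZnN \to \ZnN$ sending $(\Z^n_{\ge 0})^N$ onto $E$ realizes $V' = V \circ \Phi$. The subtle point is recognizing that the ``thinness'' of $E$ along the first $N-1$ coordinates is precisely what lets the pointwise factors $A_j(0)$ be absorbed into $\ell^2$ norms, a structural feature which rearrangement destroys.
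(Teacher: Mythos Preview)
Your arguments for (1)--(3) and (5) are correct and coincide with the paper's (the paper leaves (3) and (5) as one-line remarks, and your fill-in is the intended one).

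For (4) your argument is correct and takes a genuinely different, more elementary route than the paper. The paper works with the continuous-type weights $V(\nu)=(1+|\nu_1|+\dots+|\nu_{N-1}|)^{-(N-1)n/2-\epsilon}$ (shown to lie in $\calB_N$ via the forward reference to Proposition~\ref{single-L2}) and $W(\nu)=(1+|\nu_1|+\dots+|\nu_N|)^{-(N-1)n/2+\alpha}$ (shown not to lie in $\calB_N$ by testing with power weights), and then builds a bijection $\Phi$ via a dyadic level-set decomposition so that $W<2\,V\circ\Phi$. Your construction replaces all of this by the indicator functions of two countably infinite sets: the ``thin'' set $E=\{\nu_1=\dots=\nu_{N-1}=0\}$, whose membership in $\calB_N$ follows from a single Cauchy--Schwarz, and the nonnegative orthant, whose non-membership follows from a direct volume count. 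Because both sets and both complements are countably infinite, the bijection is for free. What you gain is self-containment (no appeal to the later Proposition~\ref{single-L2}) and a shorter proof; what the paper's approach buys is a counterexample built from smooth, radially decreasing weights of the kind that actually arise in the applications, so it simultaneously illustrates that even mild failures of the exponent condition in Example~\ref{example-of-V} already throw one out of $\calB_N$.
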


\begin{proof}
(1) If $V$ satisfies \eqref{BL222}, 
then applying it to the case where each of 
$A_0, A_1, \dots, A_N$ is a defining function  
of one point we easily find $V (\nu_1, \dots, \nu_N)\le c$. 

(2) The inclusion $\calB_1 (\Z^n) \subset 
\ell ^{\infty} (\Z^n)$ 
follows from (1). 
The opposite inclusion 
$\ell ^{\infty} (\Z^n) \subset 
\calB_1 (\Z^n)$ 
also holds because the 
Cauchy--Schwarz inequality 
implies 
\[
\sum_{\nu_1 \in \Z^n}
V (\nu_1) A_{0}(\nu_1) A_{1} (\nu_1)
\le 
\|V\|_{\ell^{\infty}}
\|A_0\|_{\ell^2} 
\|A_1\|_{\ell^2}.  
\]

(3) This can be easily proved by a simple change of 
variables.

(4) 
Let $N \ge 2$. 
The function 
$V (\nu_1, \dots, \nu_N) = (1+ |\nu_1| + \cdots + |\nu_{N-1}| )^{-(N-1)n/2-\epsilon}$ with $\epsilon >0$ 
belongs to $\calB_N (\ZnN)$; 
this can be seen from 
Proposition \ref{single-L2} (1) below 
since this $V$ is in $\ell^2((\Z^n)^{N-1})$.
On the other hand, for 
$0< \alpha < (N-1)n/2$, 
the function 
\[
W (\nu_1, \dots, \nu_N) = (1+ |\nu_1| + \cdots + |\nu_{N}| )^{-(N-1)n/2+\alpha},
\quad (\nu_1, \dots, \nu_N) \in \ZnN,
\] 
does not belong to $\calB_N (\ZnN)$. 
In fact, for $A_{j}(\mu) = \langle \mu \rangle^{-n/2-\alpha/(N+2)} \in \ell^{2}_{\mu}(\Z^n)$, 
$j=0, 1, \dots, N$,
the left hand side of \eqref{BL222} is bounded from below by
a constant (depending on $N$) times
\begin{align*}
&\sum_{\nu_1} \sum_{|\nu_2| \leq |\nu_1|/N} \cdots \sum_{|\nu_N| \leq |\nu_1|/N} 
\langle \nu_1 \rangle^{-(N-1)n/2+\alpha}
\left( \langle \nu_1 \rangle^{-n/2-\alpha/(N+2)} \right)^{N+1}
\\&\approx
\sum_{\nu_1} \langle \nu_1 \rangle^{-n+\alpha/(N+2)}
=\infty.
\end{align*}
Notice that 
both $V$ and $W$ take their values in 
the interval $(0, 1]$. 
For $\ell\in \N_0$, set 
\begin{align*}
&
E_{\ell} (V)= \{
(\nu_1, \dots, \nu_N) \in \ZnN : 
2^{-\ell-1}< V(\nu_1, \dots, \nu_N) \le 2^{-\ell}
\}, 
\\
&
E_{\ell} (W)= \{
(\nu_1, \dots, \nu_N) \in \ZnN : 
2^{-\ell-1}< W(\nu_1, \dots, \nu_N) \le 2^{-\ell}
\}. 
\end{align*}
Then the sets $\{E_{\ell}(V)\}_{\ell \in \N_0}$ 
and $\{E_{\ell}(W)\}_{\ell \in \N_0}$ 
are the partitions of $\ZnN$, 
each 
$E_{\ell}(V)$ is an infinite set, 
and 
$E_{\ell}(W)$ is a finite set.   
It is easy to construct a 
bijection $\Phi$ of $\ZnN$ onto itself 
such that 
\begin{equation*}
\Phi (E_{\ell}(W)) \subset E_{0}(V) \cup \dots \cup E_{\ell}(V) \
\;\; \text{for all}\;\; 
\ell \in \N_0. 
\end{equation*}
Then 
$W \le 2^{-\ell}$ 
and 
$V\circ \Phi > 2^{-\ell-1}$  
on each $E_{\ell}(W)$, we have 
$W < 2 V\circ \Phi $ on the whole $\ZnN$. 
Since $W \not\in \calB_N (\ZnN)$, we have  
$V\circ \Phi  \not\in \calB_N (\ZnN)$.

(5) 
This is easily checked from the definition of 
$\calB_{N}(\ZnN)$. 
\end{proof}

\begin{prop}\label{single-L2}
\begin{enumerate}
\setlength{\itemindent}{0pt} 
\setlength{\itemsep}{3pt} 


\item 
Let $N \geq 2$. 
Suppose 
a nonnegative function $V$ on 
$\ZnN$ is  
one of the following forms: 
\[
V(\nu_1, \dots, \nu_N)= 
V_{0}(\nu_1, \dots, \nu_{k-1}, \nu_{k+1}, \dots, \nu_N), 
\quad k=1, \dots, N.
\] 
Then $V \in \calB_N (\ZnN)$ 
if and only if 
there exists a constant $C\in (0, \infty)$ 
such that the inequality 
\begin{align}\label{V0L2iff}
\begin{split}
&
\sum_{\nu_1, \dots, \nu_{k-1}, \nu_{k+1}, \dots, \nu_N \in \Z^n} 
V_{0}(\nu_1, \dots, \nu_{k-1}, \nu_{k+1}, \dots, \nu_N)
\prod_{j\in\{1,\dots,N\}\setminus\{k\}} A_{j}(\nu_{j})
\\
&\leq C
\prod_{j\in\{1,\dots,N\}\setminus\{k\}} \| A_{j} \|_{\ell^2}
\end{split}
\end{align}
holds for all nonnegative 
functions $A_j$, $j\in\{1,\dots,N\}\setminus\{k\}$, on $\Z^n$. 
In particular, 
$V \in \calB_N (\ZnN)$ 
if and only if $V_0 \in \ell^2(\Z^n)$ when $N=2$, 
and $V \in \calB_N (\ZnN)$ 
if $V_0 \in \ell^2((\Z^n)^{N-1})$ when $N\geq3$.

\item 
Let $N \geq 2$.  
Then a nonzero constant function 
does not belong to 
$\calB_N ((\Z^n)^N)$. 

\item 
Let $N \geq 3$. 
Then 
any nonnegative function,
not identically zero, 
that is of the form 
$V (\nu_1, \dots, \nu_N) 
= 
V_{0}
(\nu_1, \dots, \nu_{N-2})$ 
does not belong to $\calB_N(\ZnN)$.
More generally, 
any nonnegative function,
not identically zero, 
that depends on only $N-2$ of the 
variables $\nu_1, \dots, \nu_N$ 
does not belong to 
$\calB_N(\ZnN)$.  
\end{enumerate}
\end{prop}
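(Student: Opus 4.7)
The plan is to prove (1) first and then use it to derive (2) and (3). The definition of $\calB_N$ is visibly symmetric under any permutation of the pairs $(\nu_j,A_j)$, so in (1) I may assume $k=N$, so that $V(\nu_1,\dots,\nu_N)=V_0(\nu_1,\dots,\nu_{N-1})$.

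For the easy direction of (1), \eqref{V0L2iff}$\Rightarrow$\eqref{BL222}, I would just factor the left side of \eqref{BL222} as
\[
\sum_{\nu_1,\dots,\nu_{N-1}}V_0(\nu_1,\dots,\nu_{N-1})\prod_{j=1}^{N-1}A_j(\nu_j)\cdot\Bigl[\sum_{\nu_N}A_0(\nu_1+\cdots+\nu_N)A_N(\nu_N)\Bigr],
\]
bound the bracket by $\|A_0\|_{\ell^2}\|A_N\|_{\ell^2}$ via Cauchy--Schwarz (uniformly in $\nu_1,\dots,\nu_{N-1}$), and apply \eqref{V0L2iff} to what remains. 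For the converse \eqref{BL222}$\Rightarrow$\eqref{V0L2iff}, I would test \eqref{BL222} with $A_0=A_N=|B_R\cap\Z^n|^{-1/2}\ichi_{B_R\cap\Z^n}$, each of $\ell^2$-norm $1$. For $A_1,\dots,A_{N-1}$ supported in a fixed finite set, the bracket equals $|B_R\cap(B_R-\sigma)|/|B_R\cap\Z^n|$ with $\sigma=\nu_1+\cdots+\nu_{N-1}$ remaining in a bounded region, and this tends to $1$ as $R\to\infty$. Letting $R\to\infty$ yields \eqref{V0L2iff} for finitely supported $A_j$, and monotone convergence extends it to general nonnegative $A_j\in\ell^2$. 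The two particular cases then follow: for $N=2$, \eqref{V0L2iff} is precisely the $\ell^2$--$\ell^2$ duality characterization of $V_0\in\ell^2$; for $N\ge3$, $V_0\in\ell^2((\Z^n)^{N-1})$ implies \eqref{V0L2iff} by Cauchy--Schwarz together with $\bigl\|\prod_j A_j(\nu_j)\bigr\|_{\ell^2_{\nu_1,\dots,\nu_{N-1}}}=\prod_j\|A_j\|_{\ell^2}$.

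For (2), a positive constant $V\equiv c$ does not depend on $\nu_N$, so by (1) its membership in $\calB_N$ would force \eqref{V0L2iff} for $V_0\equiv c$. Taking $A_j=\ichi_{B_R\cap\Z^n}$ for $j=1,\dots,N-1$, the left side grows like $|B_R\cap\Z^n|^{N-1}$ while the right side grows only like $|B_R\cap\Z^n|^{(N-1)/2}$, impossible for $N\ge2$. For (3), the same symmetry reduces matters to $V=V_0(\nu_1,\dots,\nu_{N-2})$. Part (1) with $k=N$ says \eqref{V0L2iff} must hold for this $V_0$; but the left side of \eqref{V0L2iff} factors as $\|A_{N-1}\|_{\ell^1}\cdot\sum V_0(\nu_1,\dots,\nu_{N-2})\prod_{j=1}^{N-2}A_j(\nu_j)$. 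If $V_0\not\equiv0$, pick $A_1,\dots,A_{N-2}\ge0$ so the second factor is strictly positive and take $A_{N-1}=\ichi_{B_R\cap\Z^n}$; then $\|A_{N-1}\|_{\ell^1}/\|A_{N-1}\|_{\ell^2}=|B_R\cap\Z^n|^{1/2}\to\infty$, contradicting \eqref{V0L2iff}.

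The only mildly delicate step I expect is the uniform-in-$\sigma$ convergence of the bracket in the converse half of (1), which is just the volume estimate $|B_R\triangle(B_R-\sigma)|=O(R^{n-1}|\sigma|)$; everything else boils down to Cauchy--Schwarz and testing against indicator functions of balls.
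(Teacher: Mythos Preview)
Your proof is correct and follows the same overall strategy as the paper: Cauchy--Schwarz in $\nu_N$ for the ``if'' direction of (1), and testing with indicator functions for the ``only if'' direction and for (2), (3).

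The only noteworthy difference is in the converse half of (1). The paper first dualizes \eqref{BL222} into an $\ell^2_{\nu_N}$ bound and then tests with $A_0=\ichi_{\{|\nu|\le NM\}}$, restricting the remaining sums to $|\nu_j|\le M$; this avoids any limiting argument in the bracket. Your approach --- testing $A_0=A_N$ with normalized indicators of $B_R\cap\Z^n$ and using $|B_R\triangle(B_R-\sigma)|=o(|B_R|)$ --- is equally valid and a bit more symmetric, at the cost of the uniform-convergence step you flagged (which is indeed routine once $A_1,\dots,A_{N-1}$ are finitely supported). For (2) and (3) the paper instead reduces to lower $N$ by testing with $A_j=\ichi_{\{0\}}$, whereas you argue directly by letting one $A_j$ be the indicator of a large ball; both arguments are elementary and of comparable length.
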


\begin{proof}
The assertion (1) for the case $N=2$ 
was already shown in \cite[Proposition 3.2]{KMT-arXiv}; 
the proof below will be an alternative one.
To prove (1), it suffices to consider the case $k=N$, 
i.e., the case where $V$ is of the form 
$V(\nu_1, \dots, \nu_N)= 
V_{0}(\nu_1, \dots, \nu_{N-1})$. 
To prove the ``if'' part, 
suppose \eqref{V0L2iff} with $k=N$ holds. 
Then, using the 
Cauchy--Schwarz inequality for the sum 
over $\nu_N$, we have 
\begin{align*}
&
\sum_{\nu_1, \dots, \nu_N \in \Z^n} 
V_0(\nu_1, \dots, \nu_{N-1}) 
A_0(\nu_1+\dots+ \nu_N) 
\prod_{j=1}^N A_{j}(\nu_{j})
\\
&\leq
\| A_{0} \|_{\ell^2 (\Z^n)}
\| A_{N} \|_{\ell^2 (\Z^n)}
\sum_{\nu_1, \dots, \nu_{N-1}}
V_0(\nu_1, \dots, \nu_{N-1})
\prod_{j=1}^{N-1} A_{j}(\nu_{j})
\lesssim
\prod_{j=0}^{N} \| A_{j} \|_{\ell^2 (\Z^n)}, 
\end{align*}
which implies $V \in \calB_{N}(\ZnN)$.
We next consider the ``only if'' part.
Assume that $V \in \calB_{N}(\ZnN)$, i.e., 
assume the inequality 
\begin{align*}
\sum_{\nu_1, \dots, \nu_N \in \Z^n} 
V_0(\nu_1, \dots, \nu_{N-1}) 
A_0(\nu_1+\dots+ \nu_N) 
\prod_{j=1}^N A_{j}(\nu_{j})
\lesssim
\prod_{j=0}^{N} \| A_{j}\|_{\ell^2 (\Z^n)} 
\end{align*}
holds. 
By duality, this is identical with
\begin{align}\label{V0L2(1)dual}
\begin{split}
&
\left\| \sum_{\nu_1, \dots, \nu_{N-1} \in \Z^n} 
V_0(\nu_1, \dots, \nu_{N-1}) 
A_0(\nu_1+\dots+ \nu_N) 
\prod_{j=1}^{N-1} A_{j}(\nu_{j}) \right\|_{\ell^2_{\nu_{N}} (\Z^n)}
\\
&\lesssim
\prod_{j=0}^{N-1} \| A_{j} \|_{\ell^2 (\Z^n)}.
\end{split}
\end{align}
We take a positive integer $M$ and set 
\begin{equation*}
A_{0} (\nu) = \left\{
\begin{array}{ll}
1, &\;\textrm{if}\; |\nu| \leq NM, \\
0, &\;\textrm{if}\; |\nu| > NM.
\end{array} \right.
\end{equation*}
Since $ V_0 $ and $A_{j}$, $j=1,\dots,N-1$, 
are nonnegative, the left hand side of \eqref{V0L2(1)dual} 
is bounded from below by
\begin{align*}
&
\left\| \sum_{|\nu_1|, \dots, |\nu_{N-1}| \leq M} 
V_0(\nu_1, \dots, \nu_{N-1}) 
A_0(\nu_1+\dots+ \nu_N) 
\prod_{j=1}^{N-1} A_{j}(\nu_{j}) \right\|_{\ell^2(\{ |\nu_{N}| \leq M\})}
\\
&=
\left\| \sum_{|\nu_1|, \dots, |\nu_{N-1}| \leq M} 
V_0(\nu_1, \dots, \nu_{N-1}) 
\prod_{j=1}^{N-1} A_{j}(\nu_{j}) \right\|_{\ell^2(\{ |\nu_{N}| \leq M\})}
\\
&\approx 
M^{n/2}
\sum_{|\nu_1|, \dots, |\nu_{N-1}| \leq M} 
V_0(\nu_1, \dots, \nu_{N-1}) 
\prod_{j=1}^{N-1} A_{j}(\nu_{j}). 
\end{align*}
Hence, \eqref{V0L2(1)dual} and the estimate 
$\| A_0 \|_{\ell^2} \approx M^{n/2}$ imply 
\begin{equation*}
\sum_{|\nu_1|, \dots, |\nu_{N-1}| \leq M} 
V_0(\nu_1, \dots, \nu_{N-1})
\prod_{j=1}^{N-1} A_{j}(\nu_{j})
\lesssim
\prod_{j=1}^{N-1} \| A_{j} \|_{\ell^2 (\Z^n)}.
\end{equation*}
Since the implicit constants above is independent of $M$,
by letting $M \to \infty$, 
we obtain \eqref{V0L2iff} with $k=N$. 
This completes the proof of the ``only if'' part.
The remaining claims concerning the case 
$V_0 \in \ell^2$
immediately follow from 
\eqref{V0L2iff} with the aid of 
duality or the Cauchy--Schwarz inequality; 
we omit the details.

For the assertion (2), 
the case $N=2$ is obvious from (1),
so that we shall investigate the cases $N\geq3$.
Toward a contradiction, 
we assume that the constant function $1$ belongs to $\calB_N(\ZnN)$,
that is, the inequality
\begin{equation*}
\sum_{\nu_1, \dots, \nu_N \in \Z^n} 
A_0(\nu_1+\dots+ \nu_N) 
\prod_{j=1}^N A_j(\nu_j)
\le c \prod_{j=0}^N
\|A_j\|_{\ell^2 (\Z^n) }
\end{equation*}
holds for all nonnegative 
functions $A_j$, $j=0,1,\dots,N$, on $\Z^n$. 
Test
\begin{equation}\label{testofconstnotinBN}
A_{j} (\nu_j) = \left\{
\begin{array}{ll}
1, &\;\textrm{if}\; \nu_{j}=0, \\
0, &\;\textrm{if}\; \nu_{j}\neq0
\end{array} \right.
\end{equation}
for $j=1,\dots,N-2$.
Then, we have
\begin{equation}\label{testedofconstnotinBN}
\sum_{\nu_{N-1}, \nu_N} 
A_0(\nu_{N-1}+ \nu_N) 
A_{N-1} (\nu_{N-1}) A_{N} (\nu_{N})
\le c 
\|A_0\|_{\ell^2}\|A_{N-1}\|_{\ell^2}\|A_N\|_{\ell^2},
\end{equation}
which means that the constant function $1$ belongs to $\calB_2 ((\Z^{n})^{2})$, 
which contradicts 
the result for the case $N=2$. 
Hence, we obtain the assertion (2) for $N\geq3$.

To prove (3), assume $V$ is of the form 
$V(\nu_1, \dots, \nu_N)= 
V_{0}(\nu_1, \dots, \nu_{N-2})
$ 
and 
assume $V\in \calB_N(\ZnN)$,
that is, the following inequality holds:
\begin{equation*}
\sum_{\nu_1, \dots, \nu_N \in \Z^n} 
V_{0}(\nu_1, \dots, \nu_{N-2}) 
A_0(\nu_1+\dots+ \nu_N) 
\prod_{j=1}^N A_j(\nu_j)
\le c \prod_{j=0}^N
\|A_j\|_{\ell^2 (\Z^n) }.
\end{equation*}
By testing $A_{j}$ given by 
\eqref{testofconstnotinBN} 
for $j=1, \dots, N- 2$, 
we have 
\begin{align*}
&
\sum_{\nu_{N-1}, \nu_N \in \Z^n} 
V_{0}(0, \dots, 0) 
A_0(\nu_{N-1}+ \nu_N) 
A_{N-1}(\nu_{N-1})
A_{N}(\nu_{N})
\\
&
\le c 
\|A_{0}\|_{\ell^2 (\Z^n) } 
\|A_{N-1}\|_{\ell^2 (\Z^n) } 
\|A_{N}\|_{\ell^2 (\Z^n) }.
\end{align*}
Then, by (2), we must have 
$V_{0}(0, \dots, 0)=0$. 
By translation, we also have 
$V_{0}(\nu_1, \dots, \nu_{N-2})=0$ 
for all $\nu_1, \dots, \nu_{N-2}$ 
and thus $V=0$. 
\end{proof}

\begin{prop}\label{weightLweak}
Let $N\ge 2$. 
Then all nonnegative functions in the class 
$\ell^{\frac{2N}{N-1}, \infty} (\ZnN)$  
belong to $\calB_N (\ZnN)$.  
\end{prop}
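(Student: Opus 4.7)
The plan is to establish a restricted strong-type estimate for characteristic functions of finite sets and then to upgrade it by a combination of layer-cake decomposition and multilinear interpolation. First I would prove the combinatorial inequality
\[
|S| \le \prod_{j=0}^N |F_j|^{N/(N+1)},
\qquad
S = \{(\nu_1,\dots,\nu_N) \in F_1 \times \dots \times F_N : \nu_1+\dots+\nu_N \in F_0\},
\]
for any finite sets $F_0,F_1,\dots,F_N \subset \Z^n$. This follows by applying AM--GM to the trivial bound $|S| \le \prod_{j \ge 1} |F_j|$ together with the $N$ bounds $|S| \le |F_0| \prod_{j \ne k, \, j \ge 1} |F_j|$ for $k=1,\dots,N$, each of which is obtained by fixing all $\nu_j$ with $j \ne k$ and observing that $\nu_k$ lies in $F_k \cap (F_0 - \sum_{j \ne k} \nu_j)$.

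Second, I would combine this with the Hardy--Littlewood-type estimate $\sum_{\nu \in S} V(\nu) \le C_p \|V\|_{\ell^{p,\infty}} |S|^{1/p'}$, valid for $V \ge 0$ and $p > 1$ by the standard rearrangement argument. Since $\frac{N}{N+1}\cdot \frac{1}{p'} = \frac{N}{N+1}\cdot \frac{N+1}{2N} = \frac{1}{2}$ for $p = 2N/(N-1)$, this yields the restricted strong-type inequality
\[
\sum_{\nu_1,\dots,\nu_N} V(\nu)\, \ichi_{F_0}(\nu_1+\dots+\nu_N) \prod_{j=1}^N \ichi_{F_j}(\nu_j)
\le C\, \|V\|_{\ell^{p,\infty}} \prod_{j=0}^N |F_j|^{1/2}.
\]
Applying layer-cake decomposition on each $A_j \in \ell^2$ then produces the estimate
$T(V,A_0,\dots,A_N) \le C \|V\|_{\ell^{p,\infty}} \prod_{j=0}^N \|A_j\|_{\ell^{2,1}}$.

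In parallel, Hölder's inequality combined with the Young-type bound $\|A_0(\nu_1+\dots+\nu_N) \prod_{j=1}^N A_j(\nu_j)\|_{\ell^{p'}} \le \prod_{j=0}^N \|A_j\|_{\ell^2}$ (proved by writing $\|\cdot\|_{\ell^{p'}}^{p'}$ as $\sum_u A_0^{p'}(u)(A_1^{p'} \ast \cdots \ast A_N^{p'})(u)$ and applying Hölder and Young with exponents $p_j = s_j/p'$, $s_j = 2$) yields $T(V,A_0,\dots,A_N) \le C \|V\|_{\ell^p} \prod \|A_j\|_{\ell^2}$, and a layer-cake decomposition of $V$ further gives $T \le C \|V\|_{\ell^{p,1}} \prod \|A_j\|_{\ell^2}$. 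Finally, these two bounds have identical base Lorentz exponents $(p,2,\dots,2)$ and differ only in the secondary Lorentz indices, so multilinear real interpolation (of Grafakos--Mastylo or multi-parameter Calderón--Mitjagin type), applied independently in each variable, produces the desired estimate with $V \in \ell^{p,\infty}$ and $A_j \in \ell^{2,2} = \ell^2$. The main obstacle is justifying this interpolation step: single-parameter real interpolation cannot simultaneously reach the endpoint combination $(V \in \ell^{p,\infty}, A_j \in \ell^2)$ from either of the two bounds above, so one must either invoke a multi-parameter interpolation machinery or carry out an explicit dyadic argument that exploits the disjointness of the level sets of the $A_j$'s to avoid the $\ell^{2,1}$-loss incurred by the naive layer-cake.
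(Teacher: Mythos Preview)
Your restricted strong-type estimate via the counting bound $|S|\le\prod_{j=0}^{N}|F_j|^{N/(N+1)}$ is correct, as is the H\"older--Young inequality you quote in the parallel branch. The gap you flag in the last paragraph is, however, a real one and cannot be closed by the interpolation you suggest. Both of your endpoint estimates live at the \emph{same} tuple of primary Lorentz indices $(p,2,\dots,2)$; they differ only in the secondary indices. Real interpolation between $\ell^{p,\infty}$ and $\ell^{p,1}$ (or between $\ell^{2,1}$ and $\ell^{2,2}$) does not produce new information when the first index is fixed, and no multi-parameter Calder\'on--Mityagin machinery will manufacture the combination $(\ell^{p,\infty},\ell^2,\dots,\ell^2)$ from the pair you have. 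A direct dyadic argument fares no better: if you layer-cake the $A_j$'s and sum absolutely, the disjointness of level sets is lost and you inevitably recover only $\|A_j\|_{\ell^{2,1}}$; if you layer-cake $V$ instead and use the H\"older--Young bound on each piece, you get $\|V\|_{\ell^{p,1}}$, which goes in the wrong direction.

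The paper closes this gap by a manoeuvre that is in fact latent in your second branch. Rather than staying at the single point $(p,2,\dots,2)$, it records the H\"older--Young bound
\[
T(V,A_0,\dots,A_N)\le \|V\|_{\ell^{q}}\prod_{j=0}^{N}\|A_j\|_{\ell^{q_j}}
\]
for the full range of exponents on the hypersurface $N/q+\sum_{j}1/q_j=N$, $0\le 1/q_j\le 1-1/q$ (your own convolution argument gives this once you keep the exponents free). The point $q=2N/(N-1)$, $q_0=\dots=q_N=2$ then sits in the \emph{interior} of this region, so Janson's multilinear real interpolation applies and yields, at that point, the Lorentz bound with $V\in\ell^{p,\infty}$ and (some) $A_j\in\ell^{2}$ directly. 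In short, the missing idea is to vary the \emph{primary} exponents rather than the secondary ones; once you do that, the interpolation step is entirely standard and your combinatorial estimate, though correct, becomes unnecessary.
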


\begin{proof} 
By appropriately extending functions 
on $\Z^n$ and $\ZnN$ to functions on $\R^n$ 
and $(\R^n)^N$, 
it is sufficient to prove the inequality 
\begin{align}\label{BLNinfty2}
\begin{split}
&\int_{(\R^n)^N} 
V(x_1, \dots, x_{N})
A_0(x_1+\cdots+x_N)
\prod_{j=1}^N A_{j}(x_{j}) \, dx_1 \cdots dx_N
\\
&\lesssim 
\|V\|_{L^{\frac{2N}{N-1}, \infty}(\R^{nN})} 
\prod_{j=0}^N \| A_{j} \|_{L^{2}(\R^n)}
\end{split}
\end{align}
for nonnegative measurable functions $V, A_j$, $j=0, 1, \dots, N$,
on the corresponding Euclidean spaces.
We shall derive this inequality from a combination of real interpolation and
the inequality 
\begin{align}\label{BLqqi}
\begin{split}
&\int_{(\R^n)^N} 
V(x_1, \dots, x_{N})
A_0(x_1+\cdots+x_N)
\prod_{j=1}^N A_{j}(x_{j}) \, dx_1 \cdots dx_N
\\&\leq
\|V\|_{L^{q}(\R^{nN})} 
\prod_{j=0}^N \| A_{j} \|_{L^{q_{j}}(\R^n)}
\end{split}
\end{align}
for
\begin{align}
&\label{scaling}
\frac{N}{q}
+
\sum_{j=0}^N \frac{1}{q_{j}}
=N, 
\\
&\label{Youngconvolution}
0\le 
\frac{1}{q_{j}} \le 1 - 
\frac{1}{q} \le 1, 
\quad j=0, 1,\dots, N.    
\end{align}
Here we give a proof of 
\eqref{BLqqi}-\eqref{scaling}-\eqref{Youngconvolution}. 
By duality, it is sufficient to show 
\begin{equation}\label{HolderYoung}
\left\| A_0(x_1+\cdots+x_N) 
\prod_{j=1}^N A_{j}(x_{j}) 
\right\|_{L^{q'}_{x_1, \dots, x_N} (\R^{nN})} 
\leq
\prod_{j=0}^N \| A_{j} \|_{L^{q_{j}} (\R^n)}. 
\end{equation}
In the case $q'=\infty$, \eqref{Youngconvolution} implies 
$q_j=\infty$, $j=0,1,\dots,N$, and \eqref{HolderYoung} is obvious. 
We assume $q' < \infty$. 
Let $\alpha_j \in [1, \infty]$, $j=0,1,\dots,N$,
satisfy $\sum_{j=0}^{N} 1/\alpha_{j} = N$. 
Then writing $\widetilde{A}(x)= A (-x)$,  
using H\"older's inequality with 
$1/ \alpha = 1- 1/ \alpha_N$, 
and using 
$(N-1)$-times Young's inequalities for convolution, 
we have 
\begin{align*}
&
\left\| A_0(x_1+\cdots+x_N) \prod_{j=1}^N A_j(x_j) \right\|_{L^{q'}_{x_1, \dots, x_N}}^{q'}
\\&=
\int 
(\widetilde{A_0}^{q'}\ast A_1^{q'} \ast \cdots \ast A_{N-1}^{q'})(-x_N)
A_N(x_N)^{q'}
\, dx_N 
\\
&\le 
\|\widetilde{A_0}^{q'}\ast A_1^{q'} \ast \cdots \ast A_{N-1}^{q'}\|_{ L^\alpha }
\|A_N^{q'}\|_{ L^{\alpha_N} }
\\
&\le 
\|\widetilde{A_0}^{q'}\|_{L^{\alpha_0}} 
\|A_1^{q'}\|_{L^{\alpha_1}} \cdots
\|A_{N-1}^{q'}\|_{L^{\alpha_{N-1}}} 
\|A_N^{q'}\|_{ L^{\alpha_N} }
\\
&=
\left( \prod_{j=0}^N \|A_j\|_{L^{q' \alpha_j}} \right)^{q'}.
\end{align*}
Here, observe that
the conditions set on 
$\alpha_j$ and $\alpha$ 
ensure that we can use 
Young's inequalities $(N-1)$-times.
By choosing $\alpha_j$ such that 
$q' \alpha_j=q_j$, $j=0,1,\dots,N$, 
we obtain \eqref{HolderYoung}.

Now, from \eqref{BLqqi}, it follows by duality 
that 
the multilinear map 
\[
T(A_0,A_1,\dots,A_{N})(x_1, \dots, x_N)
=A_0(x_1+\cdots+x_N) A_1(x_1)\dots A_N(x_N)
\]
satisfies the estimate 
\begin{equation*}
\|T(A_0,A_1,\dots,A_{N})\|_{L^{q'} (\R^{nN})}
\leq 
\prod_{j=0}^N \|A_j\|_{L^{q_j} (\R^n) }
\end{equation*}
for all $q$ and $(q_{j})$ 
satisfying \eqref{scaling} 
and \eqref{Youngconvolution}. 
Hence, by the real interpolation for multilinear 
operators (see Janson \cite{Janson}),  
it follows that 
if $q$ and $(q_j)$ satisfy \eqref{scaling} and also 
satisfy 
the strict inequalities 
\begin{equation}\label{strict}
0<\frac{1}{q_j} < 1 - 
\frac{1}{q}<1, 
\quad j=0, 1,\dots, N,
\end{equation}
then the Lorentz norm estimate 
\begin{equation*}
\|T(A_0,A_1,\dots,A_{N})\|_{L^{q',r'} (\R^{nN})}
\lesssim 
\prod_{j=0}^N \|A_j\|_{L^{q_j,r_j} (\R^n) }
\end{equation*}
holds for all $r$ and $(r_j)$ such that 
\begin{equation}\label{Holder}
r, r_j\in [1, \infty], \quad 
j=0, 1,\dots, N,
\quad \text{and}
\quad 
\frac{1}{r} +
\sum_{j=0}^N \frac{1}{r_j}
=1. 
\end{equation}
By duality again, 
this implies that 
the inequality 
\begin{equation}\label{BLLorentz}
\begin{split}
&\int_{(\R^n)^N} 
V(x_1, \dots, x_{N})
A_0(x_1+\cdots+x_N)
\prod_{j=1}^N A_{j}(x_{j}) \, dx_1 \cdots dx_N
\\
&\lesssim 
\|V\|_{
L^{q,r} (\R^{nN}) }
\prod_{j=0}^N \|A_{j}\|_{L^{q_{j},r_{j}} (\R^n) }
\end{split}
\end{equation}
holds for all $q$, $(q_{j})$, $r$, and $(r_{j})$ satisfying 
\eqref{scaling}, \eqref{strict}, and \eqref{Holder}. 
In particular, by taking 
$q=\frac{2N}{N-1}$, 
$q_0=q_1=\cdots=q_N=2$, 
$r=r_0=r_1=\cdots=r_{N-2}=\infty$, 
and 
$r_{N-1}=r_N=2$, 
we obtain 
\begin{align*}
&\int_{(\R^n)^N} 
V(x_1, \dots, x_{N})
A_0(x_1+\cdots+x_N)
\prod_{j=1}^N A_{j}(x_{j}) \, dx_1 \cdots dx_N
\\
&
\lesssim 
\|V\|_{
L^{\frac{2N}{N-1},\infty}(\R^{nN}) }
\left(\prod_{j=0}^{N-2} \|A_{j}\|_{L^{2,\infty} (\R^n) }\right)
\|A_{N-1}\|_{L^{2} (\R^n)}
\|A_{N}\|_{L^{2} (\R^n)},  
\end{align*}
which {\it a fortiori\/} implies \eqref{BLNinfty2}. 
\end{proof}

\begin{prop}\label{productLweak}
Let $N \ge 2$, 
$2<p_{j} < \infty$, 
$\sum_{j=1}^N 1/p_{j} = (N-1)/2$, 
and let 
$f_{j} \in \ell^{p_{j}, \infty}(\Z^n)$
be nonnegative sequences
for $j=1,\dots,N$. 
Then the function 
$\prod_{j=1}^N f_{j} (\nu_{j})$
belongs to $\calB_N (\ZnN)$.  
\end{prop}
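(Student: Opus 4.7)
The plan is to reduce the claim to two classical inequalities for Lorentz sequence spaces on $\Z^n$: H\"older's inequality and the iterated O'Neil convolution inequality, exploiting duality between convolution and the $\ell^2$ inner product. Setting $B_j(\nu_j):=f_j(\nu_j) A_j(\nu_j)$ for $j=1,\dots,N$ and writing $\ast$ for convolution on $\Z^n$, the left-hand side of \eqref{BL222} with $V=\prod_j f_j$ can be rewritten as
\[
\sum_{\nu_1,\dots,\nu_N}\prod_{j=1}^N f_j(\nu_j)\, A_0(\nu_1+\cdots+\nu_N)\prod_{j=1}^N A_j(\nu_j)
=\bigl\langle A_0,\, B_1\ast\cdots\ast B_N\bigr\rangle_{\ell^2(\Z^n)},
\]
so Cauchy--Schwarz reduces the matter to establishing
\[
\|B_1\ast\cdots\ast B_N\|_{\ell^2(\Z^n)}\lesssim\prod_{j=1}^{N}\|f_j\|_{\ell^{p_j,\infty}}\,\|A_j\|_{\ell^2}.
\]

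To prove this, I define $s_j\in(1,2)$ by $1/s_j=1/p_j+1/2$, so that $\sum_{j=1}^N 1/s_j=(N-1)/2+N/2=N-1/2$. First, H\"older's inequality in Lorentz spaces, applied with second indices according to $1/2=1/\infty+1/2$, gives
\[
\|B_j\|_{\ell^{s_j,2}(\Z^n)}=\|f_j A_j\|_{\ell^{s_j,2}}
\lesssim\|f_j\|_{\ell^{p_j,\infty}}\,\|A_j\|_{\ell^{2,2}}
\approx\|f_j\|_{\ell^{p_j,\infty}}\,\|A_j\|_{\ell^2}.
\]
Second, the iterated O'Neil convolution inequality on $\Z^n$, which applies since $\sum_{j=1}^N 1/s_j-(N-1)=1/2$ and $1/2\le N/2=\sum_{j=1}^N 1/2$, yields
\[
\|B_1\ast\cdots\ast B_N\|_{\ell^{2,2}(\Z^n)}\lesssim\prod_{j=1}^N\|B_j\|_{\ell^{s_j,2}(\Z^n)}.
\]
Combining these two estimates with the identification $\ell^{2,2}=\ell^{2}$ completes the proof.

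The main technical point is the legitimacy of iterating O'Neil's inequality: at each step $k=1,\dots,N$ the intermediate output exponent $p^{(k)}$ defined by $1/p^{(k)}=\sum_{j=1}^{k}1/s_j-(k-1)=\sum_{j=1}^k 1/p_j+1-k/2$ must lie in $(1,\infty)$, independently of the order in which the factors are combined. The strict inequalities $p_j>2$ give $\sum_{j=1}^k 1/p_j<k/2$, hence $1/p^{(k)}<1$; the constraint $\sum_{j=1}^N 1/p_j=(N-1)/2$ combined with $\sum_{j=k+1}^N 1/p_j<(N-k)/2$ forces $\sum_{j=1}^k 1/p_j>(k-1)/2$, hence $1/p^{(k)}>1/2>0$. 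Thus $p^{(k)}\in(1,2)\subset(1,\infty)$ at every step, and the iteration is valid. Everything else is routine bookkeeping with Lorentz indices.
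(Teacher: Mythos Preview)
Your proof is correct and takes a genuinely different route from the paper's.

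The paper first establishes the $(N+1)$-linear Young-type inequality
\[
\int A_0(x_1+\cdots+x_N)\prod_{j=1}^N f_j(x_j)A_j(x_j)\,dx
\le \|A_0\|_{L^{q_0}}\prod_{j=1}^N\|f_j\|_{L^{p_j}}\|A_j\|_{L^{q_j}}
\]
for all admissible exponents (this is just H\"older followed by iterated Young), and then invokes Janson's multilinear real interpolation theorem to upgrade all the Lebesgue norms to Lorentz norms; specializing to $q_j=2$ and second indices $(\infty,\ldots,\infty,2,2)$ gives the claim. Your argument instead isolates the $A_0$ factor by Cauchy--Schwarz, then handles the remaining $N$-fold convolution with O'Neil's Lorentz--H\"older and Lorentz--Young inequalities directly, iterating the latter $N-1$ times. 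Your route is more hands-on and avoids the abstract multilinear interpolation machinery; the paper's route is more systematic in that the same interpolation scheme also yields Proposition~\ref{weightLweak} (the $\ell^{2N/(N-1),\infty}$ criterion), so the two propositions come out of one framework.

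One small imprecision: your lower-bound argument for $1/p^{(k)}>1/2$ uses the strict inequality $\sum_{j=k+1}^N 1/p_j<(N-k)/2$, which is vacuous at $k=N$. At that final step one has $1/p^{(N)}=1/2$ exactly (so $p^{(N)}=2$, not $p^{(N)}\in(1,2)$), but this is still in $(1,\infty)$ and O'Neil applies; the iteration is therefore valid as you claim.
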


\begin{proof} 
In \eqref{BLqqi}-\eqref{scaling}-\eqref{Youngconvolution} 
given in Proof of Proposition \ref{weightLweak}, 
consider the special case that $V(x_1, \dots, x_N)=1$ and $q=\infty$. 
Then replacing $A_{j}(x_j)$ by $f_j (x_j) A_j (x_j)$ 
and $1/q_j$ by $1/p_j + 1/q_j$ for $j=1, \dots, N$, 
we obtain the inequality
\begin{align}
\label{productweaklemma}
\begin{split}
&\int_{(\R^n)^N} 
A_0(x_1+\cdots+x_N) 
\prod_{j=1}^N f_{j}(x_{j}) A_{j}(x_{j})
\, dx_1 \cdots dx_N
\\&\leq
\|A_0\|_{L^{q_0}(\R^n)}
\prod_{j=1}^N \|f_{j}\|_{L^{p_{j}}(\R^n)} 
\|A_{j}\|_{L^{q_{j}}(\R^n)}
\end{split}
\end{align}
for 
\begin{align}
&
\frac{1}{q_0} + \sum_{j=1}^N \left(\frac{1}{p_{j}}+\frac{1}{q_{j}}\right) 
=N, 
\label{HolderYoung1}
\\
&
0\le 1/q_0,1/p_{j}, 1/q_{j} \le 1, 
\label{HolderYoung2}
\\
&
0\le {1}/{p_{j}}+{1}/{q_{j}}\le 1,
\quad j=1,\dots,N.
\label{HolderYoung3}
\end{align}
Hence, by the same argument of interpolation as in 
Proof of Proposition 
\ref{weightLweak}, 
we see that 
\eqref{productweaklemma} holds 
with the Lebesgue norms replaced by 
appropriate Lorentz norms 
if the equality \eqref{HolderYoung1} holds 
and if all the inequalities 
\eqref{HolderYoung2} and \eqref{HolderYoung3}
hold with strict inequalities. 
Thus, in particular, 
for $q_0=q_1=\cdots=q_N=2$ and for $p_1, \dots, p_N$ satisfying 
$0<1/p_{j}< 1/2$ and $1/p_1+\cdots+1/p_N=(N-1)/2$, 
we have 
\begin{align*}
&
\int_{(\R^n)^N} 
\left(\prod_{j=1}^N f_j(x_j)\right)
A_0(x_1+\cdots+x_N) 
\prod_{j=1}^N A_{j}(x_{j})
\, dx_1 \cdots dx_N
\\
&
\lesssim 
\left(\prod_{j=1}^{N} 
\|f_{j}\|_{ L^{p_{j},\infty} (\R^n) }\right)
\left(\prod_{j=0}^{N-2} 
\|A_{j}\|_{L^{2,\infty} (\R^n)}\right)
\|A_{N-1}\|_{L^{2} (\R^n)}
\|A_{N}\|_{L^{2} (\R^n)}, 
\end{align*}
which {\it a fortiori\/} implies the 
conclusion of Proposition \ref{productLweak}.  
\end{proof}

Here, we prove Example \ref{example-of-V}.

\begin{proof}[Proof of Example \ref{example-of-V}]
If $N=1$, then the function \eqref{V-n/2} obviously 
belongs to $\calB _1 (\Z^n)= \ell^{\infty} (\Z ^n)$. 
If $N\ge 2$, then 
\eqref{V-n/2} 
belongs to $\calB_N (\ZnN)$ 
by Proposition \ref{weightLweak} 
and \eqref{Vproduct2}
belongs to $\calB_N (\ZnN)$ 
by Proposition \ref{productLweak} 
with 
$p_j = n/a_j$. 
\end{proof}

We introduce the following.

\begin{defn}\label{defModerate}
Let $d \in \N$. 
We say that a continuous function 
$F: \R^d \to (0, \infty)$   
is of {\it moderate class\/} if  
there exists constants $C=C_{F}>0$ 
and $M=M_{F}>0$ such that 
\begin{equation}\label{moderateabove}
F(\xi + \eta ) 
\le C F(\xi)
\langle \eta \rangle ^{M}
\; \; \text{for all} \;\; \xi, \eta \in \R^d.  
\end{equation}
We denote by 
$\calM (\R^d)$ the set of 
all functions on $\R^d$ of moderate class. 
\end{defn}

This class was defined 
in \cite[Definition 3.7]{KMT-arXiv} 
in a slightly different way. 
Notice that the inequality 
\eqref{moderateabove} implies 
\begin{equation}\label{moderateboth}
C^{-1}
F(\xi) \langle \eta \rangle ^{-M}
\le 
F(\xi - \eta) 
\le 
C 
F(\xi)
\langle \eta \rangle ^{M}. 
\end{equation}
Hence, if $p\in (0, \infty)$ and 
if $L=L_{p,M}>0$ is sufficiently large, 
we have 
\begin{equation}\label{moderateintegral}
\bigg(
\int_{\R^d} 
F(\xi - \eta)^p 
\langle \eta \rangle ^{-L}\, 
d \eta \bigg)^{1/p}
\approx 
F (\xi ). 
\end{equation}
Conversely if \eqref{moderateintegral} holds, 
then \eqref{moderateabove} holds with 
$M= L/p$. 
Thus the condition 
\eqref{moderateintegral} 
also characterizes $F\in \calM (\R^d)$. 
The case $p=2$ of \eqref{moderateintegral} 
was the condition used in 
 \cite[Definition 3.7]{KMT-arXiv}.

We give a general result 
concerning the classes $\calB$ and $\calM$. 

\begin{prop} \label{Vast}
For any $V\in \calB_N (\ZnN)$, 
there exists a function 
$V^{\ast} \in \calM (\R^{Nn})$ 
such that 
$V(\nu_1, \dots, \nu_N) \le V^{\ast} (\nu_1, \dots, \nu_N)$ 
for all $(\nu_1, \dots, \nu_N) \in \ZnN$ and 
the restriction of $V^{\ast}$ to $\ZnN $ 
belongs to $\calB_N (\ZnN)$. 
\end{prop}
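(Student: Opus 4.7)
The plan is to construct $V^{\ast}$ by discretely convolving $V$ with a smooth positive kernel of moderate decay, and then to verify all three required properties by exploiting translation invariance of the defining inequality of $\calB_N$.

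More precisely, fix $K>n$ and set $\phi(x)=\langle x\rangle^{-K}$ for $x\in\R^n$. Choose a constant $c_0>0$ so that $c_0\phi(0)\ge 1$, and define
\[
V^{\ast}(\xi_1,\dots,\xi_N)
=c_0 \sum_{\mu_1,\dots,\mu_N\in\Z^n}
V(\mu_1,\dots,\mu_N)\prod_{j=1}^N \phi(\xi_j-\mu_j),
\qquad (\xi_1,\dots,\xi_N)\in(\R^n)^N.
\]
By Proposition \ref{propertiesB}(1), $V$ is bounded, and since $K>n$ one has $\sum_{\mu\in\Z^n}\phi(x-\mu)<\infty$ uniformly on compact sets, so the series defining $V^{\ast}$ converges uniformly on compact subsets of $\R^{Nn}$ and defines a continuous positive function. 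The first required property $V(\nu_1,\dots,\nu_N)\le V^{\ast}(\nu_1,\dots,\nu_N)$ for $(\nu_1,\dots,\nu_N)\in\ZnN$ is immediate, because the term $\mu_j=\nu_j$ alone contributes $c_0 V(\nu_1,\dots,\nu_N)\phi(0)^N\ge V(\nu_1,\dots,\nu_N)$.

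Next I verify that $V^{\ast}\in\calM(\R^{Nn})$. By the classical Peetre-type inequality $\langle x+y\rangle^{-K}\le 2^{K/2}\langle x\rangle^{-K}\langle y\rangle^{K}$ applied componentwise,
\[
\prod_{j=1}^N\phi(\xi_j+\eta_j-\mu_j)
\le 2^{NK/2}\langle(\eta_1,\dots,\eta_N)\rangle^{NK}
\prod_{j=1}^N\phi(\xi_j-\mu_j),
\]
and summing over $\mu$ gives $V^{\ast}(\xi+\eta)\lesssim\langle\eta\rangle^{NK}V^{\ast}(\xi)$, which is \eqref{moderateabove}.

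The main step, and the one to be handled carefully, is the third: that the restriction $V^{\ast}|_{\ZnN}$ lies in $\calB_N(\ZnN)$. Given nonnegative sequences $A_0,\dots,A_N$ on $\Z^n$, substitute the definition of $V^{\ast}$ in the left-hand side of \eqref{BL222}, interchange the order of summation, and for each fixed $\mu\in\ZnN$ perform the change of variables $\eta_j=\nu_j-\mu_j$; this yields
\begin{align*}
&\sum_{\nu_1,\dots,\nu_N}V^{\ast}(\nu_1,\dots,\nu_N)A_0(\nu_1+\cdots+\nu_N)\prod_{j=1}^N A_j(\nu_j)\\
&=c_0\sum_{\eta_1,\dots,\eta_N\in\Z^n}\prod_{j=1}^N\phi(\eta_j)
\sum_{\mu_1,\dots,\mu_N}V(\mu)\,A_0^{\eta}(\mu_1+\cdots+\mu_N)\prod_{j=1}^N A_j^{\eta_j}(\mu_j),
\end{align*}
where $A_0^{\eta}(m)=A_0(m+\eta_1+\cdots+\eta_N)$ and $A_j^{\eta_j}(m)=A_j(m+\eta_j)$ are translates. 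Since the $\ell^2$ norms are translation-invariant and $V\in\calB_N(\ZnN)$, the inner sum is bounded by $c\,\|A_0\|_{\ell^2}\prod_j\|A_j\|_{\ell^2}$ uniformly in $\eta$. The remaining outer factor $\sum_{\eta}\prod_j\phi(\eta_j)=\bigl(\sum_{\eta\in\Z^n}\phi(\eta)\bigr)^{N}$ is finite because $K>n$, which closes the estimate.

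The only real subtlety is this last step: the argument depends on the fact that the defining inequality \eqref{BL222} is genuinely translation invariant in each of the variables $\nu_j$ and in the diagonal variable $\nu_1+\cdots+\nu_N$, and a product kernel $\prod_j\phi(\xi_j-\mu_j)$ respects that structure. A non-product smoothing kernel would couple the $\mu_j$'s in a way that is not compatible with the separate $\ell^2$ norms appearing in \eqref{BL222}, so the product form of $\Phi$ is essential.
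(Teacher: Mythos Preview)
Your proof is correct and follows essentially the same strategy as the paper: mollify $V$ by a discrete convolution with a summable positive kernel and exploit the translation invariance of the inequality defining $\calB_N$. The only difference is cosmetic: you use the product kernel $\prod_{j}\langle\xi_j-\mu_j\rangle^{-K}$ with $K>n$, whereas the paper uses the joint kernel $\langle(\xi_1-\mu_1,\dots,\xi_N-\mu_N)\rangle^{-M}$ with $M>Nn$.

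Your closing remark, however, is mistaken: the product form of the kernel is \emph{not} essential. If you rerun your own change-of-variables argument with the non-product kernel $\Phi(\eta)=\langle(\eta_1,\dots,\eta_N)\rangle^{-M}$, you still arrive at
\[
\sum_{\eta\in\ZnN}\Phi(\eta)\sum_{\mu\in\ZnN}V(\mu)\,A_0^{\eta}(\mu_1+\cdots+\mu_N)\prod_{j=1}^N A_j^{\eta_j}(\mu_j),
\]
and the inner sum is controlled by the $\calB_N$ inequality exactly as before; the outer sum is finite as soon as $M>Nn$. What actually matters is only that the kernel depends on the differences $\xi_j-\mu_j$ (so translation invariance applies) and is summable over $\ZnN$; factorization plays no role. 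This is precisely how the paper proceeds.
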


\begin{proof} 
This proposition for the case $N=2$ was 
proved in \cite[Proposition 3.8]{KMT-arXiv}. 
Here we give a slightly simpler 
proof for general $N$.  
Suppose 
$V\in \calB_N ( \ZnN )$ 
and suppose the inequality 
\eqref{BL222} holds. 
We may assume $V$ is not identically equal to $0$. 
By translation of variables, 
we see that the inequality 
\begin{align}\label{BL222translation}
\begin{split}
\sum_{\nu_1, \dots, \nu_N \in \Z^n} 
V(\nu_1 - \mu_1, \dots, \nu_N - \mu_N) 
A_0(\nu_1+\cdots+ \nu_N) 
\prod_{j=1}^N A_{j}(\nu_{j})
\le c 
\prod_{j=0}^N \|A_{j}\|_{\ell^2 (\Z^n) } 
\end{split}
\end{align} 
holds for all $(\mu_1, \dots, \mu_N) \in \ZnN $ 
with the same constant $c$ as in \eqref{BL222}. 
Take a number $M> Nn$. 
Multiplying \eqref{BL222translation} 
by $\langle  (\mu_1, \dots, \mu_N)  \rangle ^{-M}$ 
and taking sum over 
$ \mu_1, \dots, \mu_N\in \Z^n$, 
we see that the function 
\begin{align*}
G(\nu_1, \dots, \nu_N) 
&
= \sum_{\mu_1, \dots, \mu_N \in \Z^n} 
V(\nu_1 - \mu_1, \dots, \nu_N - \mu_N) 
\langle  (\mu_1, \dots, \mu_N)  \rangle ^{-M} 
\\
&= \sum_{\mu_1, \dots, \mu_N \in \Z^n} 
V(\mu_1, \dots, \mu_N) 
\langle  ( \nu_1-\mu_1, \dots, \nu_N - \mu_N)  
\rangle ^{-M} 
\end{align*} 
also belongs to the class $\calB_N ((\Z^n)^N)$. 
Obviously 
$G(\nu_1, \dots, \nu_N) 
\ge V(\nu_1, \dots, \nu_N)$. 
We define $V^{\ast}$ 
on $\R^{nN}$ by 
\[
V^{\ast} (\xi_1, \dots, \xi_N)
=
\sum_{\mu_1, \dots, \mu_N \in \Z^n} 
V(\mu_1, \dots, \mu_N) 
\langle  ( \xi_1-\mu_1, \dots, \xi_N - \mu_N)  
\rangle ^{-M},  
\]
which is an extension of $G$ to $\R^{Nn}$. 
Then $V^{\ast}$ is 
in the moderate class $\calM (\R^{Nn})$ and 
has the desired properties. 
\end{proof}

\section{Lemmas}\label{sectionLemma}

In this section, we denote by $S$ the operator 
\begin{equation*}
S (f) (x) 
= \int_{\R^n} \frac{ f(y) }{ \langle x-y \rangle^{L} } dy
\end{equation*}
for sufficiently large $L>n$.

We will give several properties of the operator $S$.

\begin{lem}\label{propertiesofS} 
The following (1)--(4) 
hold for all nonnegative measurable 
functions $f, g$ on $\R^n$. 
\begin{enumerate}
\setlength{\itemindent}{0pt} 
\setlength{\itemsep}{3pt} 
\item 
$S(f \ast g)(x) = \big( S(f) \ast g \big)(x) =  
\big( f \ast S(g) \big)(x)$.
\item 
$S\left( S(f) \right)(x) \approx S(f) (x) $. 
\item 
$S(f)(x) \approx S(f)(y)$
for $x,y\in \R^n$ such that $|x-y|\lesssim1$.
\item 
$S\left (\ichi_{Q}\ast f\right)(x) 
\approx S(f)(x)$. 
\item 
Let $\varphi$ be a function in $\calS(\R^n)$
with compact support.
Then, $| \varphi(D-\nu) f(x) |^{2}
\lesssim S( | \varphi(D-\nu) f |^{2} )(x)$
for any $f \in \calS(\R^{n})$, $\nu\in\Z^n$, and $x\in\R^n$.
\end{enumerate}
\end{lem}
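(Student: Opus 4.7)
\medskip

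\noindent\textbf{Proof plan for Lemma \ref{propertiesofS}.}
Note that $S$ is itself convolution with the kernel $k(x) = \langle x \rangle^{-L}$, and $k \in L^1(\R^n)$ since $L > n$. The statement (1) is then immediate from the associativity and commutativity of convolution: $S(f \ast g) = k \ast (f \ast g) = (k \ast f) \ast g = S(f) \ast g$, and the other identity is analogous.

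For (2), I would verify directly that $k \ast k(x) \approx k(x)$. Splitting the integral defining $(k \ast k)(x) = \int \langle x-y\rangle^{-L}\langle y\rangle^{-L}\,dy$ into the regions $|y| \le |x|/2$ and $|y| > |x|/2$, in the first region $\langle x-y\rangle \approx \langle x\rangle$ and the integral is $\lesssim \langle x\rangle^{-L}$, while in the second region $\langle y\rangle \gtrsim \langle x\rangle$ and the same bound follows (here $L > n$ is used to make the remaining integrals converge). For (3), for $|x-y|\lesssim 1$ one has $\langle x-z\rangle \approx \langle y-z\rangle$ uniformly in $z$, so the integrands defining $S(f)(x)$ and $S(f)(y)$ are pointwise comparable. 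For (4), by (1) we have $S(\ichi_Q \ast f)(x) = (\ichi_Q \ast S(f))(x) = \int_{x-Q} S(f)(z)\,dz$, and since $Q$ is a bounded set, (3) shows $S(f)(z) \approx S(f)(x)$ for $z \in x-Q$, giving the desired equivalence.

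The main work, and the only step that uses harmonic analysis beyond elementary kernel manipulations, is (5). The key is a reproducing formula: choose an auxiliary function $\psi \in \calS(\R^n)$ with compact support such that $\psi \equiv 1$ on $\supp \varphi$. Since $\varphi(D-\nu)f$ has Fourier transform supported in $\nu + \supp\varphi$, we obtain
\[
\varphi(D-\nu)f = \psi(D-\nu)\bigl[\varphi(D-\nu)f\bigr].
\]
Now $\calF^{-1}[\psi(\cdot - \nu)](x) = e^{i\nu \cdot x}\check{\psi}(x)$, so with $g = \varphi(D-\nu)f$ we get $g(x) = \int e^{i\nu\cdot(x-y)}\check{\psi}(x-y)\,g(y)\,dy$ and therefore
\[
|g(x)| \le \int |\check{\psi}(x-y)|\,|g(y)|\,dy.
\]
Because $\psi \in \calS(\R^n)$, we have $|\check{\psi}(z)| \lesssim \langle z \rangle^{-L}$ for any $L$, in particular for the $L$ appearing in the definition of $S$. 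Applying the Cauchy--Schwarz inequality with the weight $\langle x-y\rangle^{-L}$ split as $\langle x-y\rangle^{-L/2}\cdot \langle x-y\rangle^{-L/2}$ yields
\[
|g(x)|^2 \lesssim \Bigl(\int \langle x-y\rangle^{-L}\,dy\Bigr)\Bigl(\int \langle x-y\rangle^{-L}|g(y)|^2\,dy\Bigr) \lesssim S(|g|^2)(x),
\]
which is (5). The anticipated obstacle is making sure the reproducing formula is set up correctly and that the modulation factor $e^{i\nu\cdot x}$ (which carries no size information) is handled cleanly; once this is in place the Cauchy--Schwarz step is routine.
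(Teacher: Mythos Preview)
Your proposal is correct and follows essentially the same approach as the paper's proof: (1)--(4) are handled via the same elementary convolution identities, and for (5) both you and the paper use the reproducing formula $\varphi(D-\nu)f = \psi(D-\nu)[\varphi(D-\nu)f]$ with an auxiliary $\psi\equiv 1$ on $\supp\varphi$, followed by Cauchy--Schwarz against the rapidly decreasing kernel. The only cosmetic differences are that the paper proves (4) directly from $\langle\cdot\rangle^{-L}\ast\ichi_Q\approx\langle\cdot\rangle^{-L}$ rather than combining (1) and (3), and in (2) you should note (trivially) the lower bound $k\ast k\gtrsim k$ as well.
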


\begin{proof}
The assertion (1) follows from the associative and 
commutative laws for the convolution.
The assertion (2)  
follows from 
the relation 
$\langle \cdot \rangle^{-L} \ast 
\langle \cdot \rangle^{-L} \approx \langle \cdot \rangle^{-L}$, which holds for $L>n$. 
The assertion (3) is obvious from the fact 
$\langle x-z \rangle \approx \langle y-z \rangle$ 
for $z\in \R^n$ and $|x-y|\lesssim1$.
The assertion (4) follows 
from 
$\langle \cdot \rangle ^{-L} \ast \ichi_{Q} 
\approx \langle \cdot \rangle ^{-L} $. 
Finally, we shall prove the assertion (5).
Taking $\widetilde \varphi \in \calS(\R^n)$ satisfying
$\widetilde \varphi=1$ on $\mathrm{supp}\, \varphi$,
we can write
\begin{equation*}
\varphi(D-\nu) f(x) 
=
\widetilde \varphi(D-\nu)\varphi(D-\nu) f(x).
\end{equation*}
Then, the kernel representation,
\begin{equation*}
\varphi(D-\nu) f(x) 
=
\int_{\R^n} e^{i(x-y)\cdot \nu} 
{\calF^{-1} \widetilde \varphi}(x-y) \, \varphi(D-\nu) f(y) dy, 
\end{equation*}
provides by the use of the Cauchy--Schwarz inequality that
\begin{align*}
\left| \varphi(D-\nu) f(x) \right|^{2}
&\lesssim
\int_{\R^n} \left| {\calF^{-1}\widetilde \varphi}(x-y) \right| \left| \varphi(D-\nu) f(y) \right|^{2} dy
\\
&\lesssim 
S( | \varphi(D-\nu) f |^{2} )(x),
\end{align*}
which is the desired result.
\end{proof}

\begin{lem}[{\cite[Lemma 3.2]{miyachi tomita 2018 AIFG}}]
\label{estSRx}
Let $\varphi \in \calS (\R^n)$. 
Then we have
\begin{equation*}
\left\| \varphi ( D - \nu ) h (x) \right\|_{\ell^2_{\nu}}
\lesssim
\left\{ S \left( |h|^2 \right) (x) \right\}^{1/2}
\end{equation*}
for any $x\in\R^n$.
\end{lem}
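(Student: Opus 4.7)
The plan is to pass from the discrete sampling $\nu \in \Z^n$ to a continuous parameter and apply a Sobolev-type sampling inequality. Define the smooth function
\begin{equation*}
\Psi_x(\xi) := \varphi(D-\xi)h(x)
= \int_{\R^n} e^{i(x-y)\cdot\xi}\check\varphi(x-y) h(y)\, dy,
\quad \xi\in\R^n,
\end{equation*}
so that $\Psi_x(\nu)$ is exactly the quantity we want to estimate. Since $\{\nu+Q\}_{\nu\in\Z^n}$ partitions $\R^n$, the integer Sobolev embedding $W^{s,2}(\nu+Q)\hookrightarrow L^\infty(\nu+Q)$ for $s\in\N$ with $s>n/2$ gives, with a constant independent of $\nu\in\Z^n$ by translation invariance,
\begin{equation*}
|\Psi_x(\nu)|^2 \lesssim \int_{\nu+Q} \sum_{|\alpha|\le s} |\partial^\alpha_\xi \Psi_x(\xi)|^2\, d\xi.
\end{equation*}
Summing in $\nu$ telescopes the integrals and reduces the lemma to the bound
$\sum_{|\alpha|\le s}\int_{\R^n}|\partial^\alpha_\xi \Psi_x(\xi)|^2\, d\xi \lesssim S(|h|^2)(x).$

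For the remaining inequality I would identify $\Psi_x$, as a function of $\xi$, with an inverse Fourier transform. Substituting $\zeta=x-y$ in the formula above gives
\begin{equation*}
\Psi_x(\xi) = \int_{\R^n} e^{i\zeta\cdot\xi} \check\varphi(\zeta) h(x-\zeta)\, d\zeta
= (2\pi)^n\, \calF^{-1}\bigl[\check\varphi(\zeta) h(x-\zeta)\bigr](\xi).
\end{equation*}
Differentiating under the integral in $\xi$ brings down the factor $(i\zeta)^\alpha$, so Plancherel's theorem yields
\begin{equation*}
\int_{\R^n}|\partial^\alpha_\xi \Psi_x(\xi)|^2\, d\xi
\lesssim \int_{\R^n} |\zeta|^{2|\alpha|} |\check\varphi(\zeta)|^2 |h(x-\zeta)|^2\, d\zeta.
\end{equation*}

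Summing over $|\alpha|\le s$ produces the weight $\langle\zeta\rangle^{2s}|\check\varphi(\zeta)|^2$, which, since $\check\varphi\in\calS(\R^n)$, is bounded by $\langle\zeta\rangle^{-L}$ for any $L>n$; choosing $L$ as in the definition of $S$ and reverting to $y = x-\zeta$ gives $\int\langle x-y\rangle^{-L}|h(y)|^2\, dy = S(|h|^2)(x)$, which finishes the argument. There is no serious obstacle in this outline: the main point to be careful with is that the implicit constant from Sobolev embedding on the cube $\nu+Q$ is independent of $\nu$ (which is immediate by translation), and that one may fix the number $s$ of derivatives independently of $L$ because the Schwartz decay of $\check\varphi$ absorbs any polynomial weight $\langle\zeta\rangle^{2s}$ with plenty of room to spare.
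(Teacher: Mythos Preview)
Your argument is correct. The paper does not actually prove this lemma; it simply quotes it from \cite[Lemma 3.2]{miyachi tomita 2018 AIFG}, so there is no in-paper proof to compare against. Your route---viewing $\xi\mapsto\varphi(D-\xi)h(x)$ as $(2\pi)^n$ times the inverse Fourier transform of $\zeta\mapsto\check\varphi(\zeta)h(x-\zeta)$, using the uniform Sobolev embedding $W^{s,2}(\nu+Q)\hookrightarrow L^\infty(\nu+Q)$ to pass from samples to an $L^2$ integral in $\xi$, and then Plancherel to land on $\int\langle\zeta\rangle^{2s}|\check\varphi(\zeta)|^2|h(x-\zeta)|^2\,d\zeta$---is clean and self-contained. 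The two points you flag (translation invariance of the Sobolev constant on unit cubes, and the Schwartz decay of $\check\varphi$ absorbing any fixed polynomial weight $\langle\zeta\rangle^{2s}$) are exactly the ones that matter, and both are unproblematic.
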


\begin{lem}
\label{SellequivL}
Let $p \in (0, \infty]$.
Then the following hold.
\begin{enumerate}
\setlength{\itemindent}{0pt} 
\setlength{\itemsep}{5pt} 
\item 
$\| S(f)(x) \|_{L^p_x(\R^n)}
\approx \| S(f)(\nu) \|_{\ell^p_\nu (\Z^n)}$ 
for all nonnegative $f$. 
\item 
If the number $L$ in the definition of 
$S$ satisfies $\min (1, p/2) L >n$, then 
$\left\| 
\left\{ S \left( |f|^2 \right) (x) \right\}^{1/2}
\right\|_{L^p_x(\R^n)}
\approx
\| f \|_{(L^2,\ell^p)(\R^n)}$.
\end{enumerate}
\end{lem}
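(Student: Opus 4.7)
The plan is to prove (1) first by exploiting the fact that $S(f)$ is essentially constant on unit cubes, and then to reduce (2) to (1) combined with a standard sequence-convolution estimate.

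For (1), the key observation is Lemma \ref{propertiesofS}(3): if $x \in \nu + Q$ with $\nu \in \Z^n$, then $|x - \nu| \lesssim 1$, so $S(f)(x) \approx S(f)(\nu)$. Partitioning $\R^n$ into the unit cubes $\nu + Q$ via \eqref{cubicdiscretization} yields
\[
\|S(f)\|_{L^p(\R^n)}^p = \sum_{\nu \in \Z^n} \int_Q S(f)(x+\nu)^p \, dx \approx \sum_{\nu \in \Z^n} S(f)(\nu)^p,
\]
and the $p = \infty$ case is analogous. This step only requires $L > n$.

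For (2), I would first rewrite
\[
\bigl\| \{S(|f|^2)\}^{1/2} \bigr\|_{L^p(\R^n)}^p = \|S(|f|^2)\|_{L^{p/2}(\R^n)}^{p/2} \approx \sum_{\nu \in \Z^n} S(|f|^2)(\nu)^{p/2},
\]
where the last step applies (1) with the exponent $p/2$. Next, decomposing the integral defining $S(|f|^2)(\nu)$ as a sum over the cubes $\mu + Q$ and using $\langle \nu - y - \mu \rangle \approx \langle \nu - \mu \rangle$ for $y \in Q$, I would obtain
\[
S(|f|^2)(\nu) \approx \sum_{\mu \in \Z^n} \langle \nu - \mu \rangle^{-L} g(\mu), \qquad g(\mu) = \int_Q |f(y+\mu)|^2 \, dy.
\]
Since $\|g\|_{\ell^{p/2}(\Z^n)}^{p/2} = \|f\|_{(L^2, \ell^p)}^p$, the statement reduces to the sequence-level equivalence
\[
\sum_{\nu \in \Z^n} \Bigl( \sum_{\mu \in \Z^n} \langle \nu - \mu \rangle^{-L} g(\mu) \Bigr)^{p/2} \approx \sum_{\mu \in \Z^n} g(\mu)^{p/2}.
\]

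The lower bound is immediate by retaining only the term $\mu = \nu$ in the inner sum. The upper bound, which is the main technical point, amounts to the $\ell^{p/2}$ boundedness of convolution on $\Z^n$ with the kernel $K(\mu) = \langle \mu \rangle^{-L}$. When $p/2 \geq 1$, Young's inequality suffices provided $K \in \ell^1(\Z^n)$, i.e., $L > n$. When $p/2 < 1$, the $(p/2)$-subadditivity of $t \mapsto t^{p/2}$ lets me move the outer exponent inside the convolution, reducing the required bound to $K \in \ell^{p/2}(\Z^n)$, i.e., $L p/2 > n$. The two constraints combined are exactly the hypothesis $\min(1, p/2) L > n$. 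The dichotomy between these regimes is the only real subtlety, since no single inequality handles both cases simultaneously.
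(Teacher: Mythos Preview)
Your proof is correct and follows essentially the same route as the paper: part (1) is identical, and for part (2) both arguments use (1) to pass to the $\ell^{p/2}$ norm and then control a weighted average with kernel $\langle\cdot\rangle^{-L}$ via the dichotomy $p/2\ge 1$ versus $p/2<1$. The only organizational difference is that the paper packages your discrete convolution estimate as an application of Lemma~\ref{lemequivalentamalgam} (with $g(x)=\langle x\rangle^{-L}$, $p\to 1$, $q\to p/2$), whose proof via inequality~\eqref{mixedtriangle} encodes exactly the Young/subadditivity split you carry out by hand.
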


\begin{proof}
The first assertion immediately
follows from Proposition \ref{propertiesofS} (3).

For the second assertion, we have 
\begin{align*}
&
\left\| \left\{ S 
\left( |f|^2 \right)(x) \right\}^{1/2} \right\|_{L^{p}_x}
\approx 
\left\| 
S \left( |f|^2 \right)(\nu) \right\|_{\ell^{p/2}_\nu} ^{1/2}
\\
&=
\left\| 
\langle \nu - y \rangle ^{-L} |f(y)|^2 
\right\|
_{ L^1_{y}(\R^n) \ell^{p/2}_\nu (\Z^n)}
^{1/2}
\approx
\left\| |f|^2 
\right\|_{ (L^1, \ell^{p/2}) } ^{1/2}  
= 
\| f \|_{(L^2,\ell^p)}, 
\end{align*}
where the former $\approx$ follows from the first assertion 
and the 
latter $\approx$ follows from 
Lemma \ref{lemequivalentamalgam}. 
\end{proof}

We end this section by mentioning a lemma that 
can be found in Sugimoto \cite[Lemma 2.2.1]{Sugimoto}. 
We give a proof for the reader's convenience.

\begin{lem}
\label{unifdecom}
There exist functions $\kappa \in \calS(\R^n)$ and $\chi \in \calS(\R^n)$ 
such that
$\supp \kappa \subset [-1,1]^n$, $\supp \widehat \chi \subset B_1$, 
$| \chi | \geq c > 0$ on $[-1,1]^n$
and
\begin{equation*}
\sum_{\nu\in\Z^n} \kappa (\xi - \nu) \chi (\xi - \nu ) = 1, \quad \xi \in \R^n.
\end{equation*}
\end{lem}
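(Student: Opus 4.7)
The strategy is to first construct $\chi$ satisfying the last three listed properties, and then obtain $\kappa$ by dividing a standard $\Z^n$-partition of unity by $\chi$.

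To build $\chi$, I would fix $\epsilon>0$ small enough that both $2\epsilon\le 1$ and $\sqrt{n}\,\epsilon<\pi/2$ hold, and choose a nonnegative even function $\varphi\in C_c^\infty(\R^n)$ with $\supp\varphi\subset B_\epsilon$ and $\varphi\not\equiv 0$. Define $\chi$ by the requirement $\widehat{\chi}=\varphi\ast\varphi$, so that $\widehat{\chi}$ is smooth, nonnegative, and supported in $B_{2\epsilon}\subset B_1$. Using $\calF^{-1}(\varphi\ast\varphi)=(2\pi)^{n}(\calF^{-1}\varphi)^{2}$ and the fact that $\varphi$ is real and even, one obtains
\[
\chi(\xi)=(2\pi)^{-n}\Big(\int_{\R^n}\cos(\xi\cdot x)\,\varphi(x)\,dx\Big)^{2}\ge 0.
\]
For $\xi\in[-1,1]^n$ and $x\in B_\epsilon$ the phase satisfies $|\xi\cdot x|\le\sqrt{n}\,\epsilon<\pi/2$, so $\cos(\xi\cdot x)\ge\cos(\sqrt{n}\,\epsilon)>0$, yielding a uniform lower bound $\chi\ge c>0$ on $[-1,1]^n$.

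Next I would manufacture a standard partition of unity. Pick a nonnegative $\eta_0\in C_c^\infty(\R^n)$ with $\supp\eta_0\subset[-1,1]^n$ and $\eta_0>0$ on $[-1/2,1/2]^n$. Then $E(\xi):=\sum_{\nu\in\Z^n}\eta_0(\xi-\nu)$ is smooth, $\Z^n$-periodic and strictly positive, so $\eta:=\eta_0/E$ belongs to $C_c^\infty([-1,1]^n)$ and satisfies $\sum_{\nu}\eta(\xi-\nu)=1$. Since $\chi$ is smooth and bounded below on $[-1,1]^n$, the ratio $\kappa(\xi):=\eta(\xi)/\chi(\xi)$ (extended by zero outside $[-1,1]^n$) lies in $C_c^\infty([-1,1]^n)\subset\calS(\R^n)$. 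By construction $\kappa(\xi-\nu)\chi(\xi-\nu)=\eta(\xi-\nu)$ identically, and summing over $\nu$ gives the required identity.

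The substantive obstacle is the construction of $\chi$. Paley--Wiener forces $\chi$ to be the restriction of an entire function of exponential type at most $1$, so avoiding zeros on the larger region $[-1,1]^n$ is delicate. The remedy is to make $\varphi$ sufficiently concentrated so that the phase $\xi\cdot x$ remains inside $(-\pi/2,\pi/2)$ throughout the relevant range, which forces the representing cosine integral to be strictly positive. Everything after that is routine division within a partition of unity.
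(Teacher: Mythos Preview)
Your proof is correct and follows essentially the same route as the paper: build $\chi$ with the band-limited and nonvanishing properties, take a standard compactly supported partition of unity, and divide. The only difference is that the paper declares the existence of such a $\chi$ to be well-known, whereas you supply an explicit construction via $\widehat{\chi}=\varphi\ast\varphi$ with $\varphi$ concentrated near the origin; this is a standard and valid way to fill that gap.
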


\begin{proof}
The existence of a function $\chi \in \calS (\R^n)$ satisfying
$\supp \widehat \chi \subset B_1$ and
$| \chi | \geq 1$ on $[-1,1]^n$
is well-known.
Furthermore, we also know well that
there exist a partition of unity such that $\varphi\in\calS(\R^n)$,
\begin{equation*}
\supp \varphi \subset [-1,1]^{n},
\quad \textrm{and} \quad
\sum_{\nu\in\Z^n} \varphi (\xi - \nu) = 1, \quad \xi \in \R^n.
\end{equation*}
Therefore, we see that
\begin{equation*}
1 
= \sum_{\nu\in\Z^n} \varphi (\xi - \nu) 
= \sum_{\nu\in\Z^n} \frac{ \varphi (\xi - \nu) }{ \chi (\xi - \nu) } \chi (\xi - \nu ),
\end{equation*}
and that $\varphi/\chi \in \calS(\R^n)$ with $\supp \varphi/\chi \subset [-1,1]^n$.
Hence, choosing $\kappa=\varphi/\chi$, we complete the proof.
\end{proof}

\section{Main results}\label{sectionMain}
\subsection{Key proposition for Theorem \ref{main-thm-1}}
In this subsection, we prove 
Proposition \ref{main-prop} below, 
which plays a crucial role in our argument. 
In the succeeding subsections, 
we shall prove our main results, 
Theorems \ref{main-thm-1} and \ref{main-thm-2},  
by utilizing this proposition. 
The basic idea contained in 
these argument 
goes back to 
Boulkhemair 
\cite{boulkhemair 1995}.

\begin{prop}
\label{main-prop}
Let $W \in \calM (\R^{Nn})$ 
and suppose 
the restriction of $W$ to $\ZnN$ belongs 
to the class $\calB_N (\ZnN)$. 
For $j=1,\dots, N$, let 
$R_{0}, R_{j} \in [1, \infty)$, 
$q_{j},r \in (0, \infty]$, 
and $\sum_{j=1}^N 1/q_j = 1/r$.
Suppose $\sigma$ is 
a bounded continuous function on 
$(\R^n)^{N+1}$ such that  
$\supp \calF \sigma \subset 
B_{R_0} \times B_{R_1} \times \cdots \times B_{R_N}$. 
Then 
\begin{equation}
\label{conclusionProp}
\begin{split}
&
\left\| T_{ \sigma } \right\|_{(L^2,\ell^{q_1}) \times \dots \times (L^2,\ell^{q_N}) \to (L^2, \ell^{r})}
\\
&\lesssim
R_{0}^{n/2}
\bigg(\prod_{j=1}^N R_{j}^{ n/\min(2,q_{j}) }\bigg) 
\big\|
W(\xi_1, \dots, \xi_N)^{-1} 
\sigma (x,\xi_1,\dots,\xi_N) \big\|_{ L^{2}_{ul} ((\R^{n})^{N+1})}. 
\end{split}
\end{equation}
\end{prop}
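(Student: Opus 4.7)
The plan is to follow Boulkhemair~\cite{boulkhemair 1995} adapted to the multilinear amalgam setting, with the key mechanism being a frequency partition of unity combined with the $\calB_{N}$ inequality. Using Lemma~\ref{unifdecom}, I pick $\kappa,\chi\in\calS(\R^n)$ with $\supp\kappa\subset[-1,1]^n$, $\supp\widehat\chi\subset B_{1}$, and $\sum_{\nu}\kappa(\xi-\nu)\chi(\xi-\nu)=1$; set $g_{j,\nu_{j}}=\kappa(D-\nu_{j})f_{j}$ and $\sigma^{(1)}_{\nu}(x,\xi)=\sigma(x,\xi)\prod_{j=1}^{N}\chi(\xi_{j}-\nu_{j})$, so that
\[
T_{\sigma}(f_{1},\dots,f_{N})(x)=\sum_{\nu\in\ZnN}H_{\nu}(x),\qquad H_{\nu}:=T_{\sigma^{(1)}_{\nu}}(g_{1,\nu_{1}},\dots,g_{N,\nu_{N}}).
\]
Since $\widehat{g_{j,\nu_{j}}}$ is supported in $\nu_{j}+[-1,1]^{n}$, I would apply Cauchy--Schwarz in $\xi$ on the box $\prod_{j}(\nu_{j}+[-1,1]^{n})$ and combine with the moderation $W(\xi)\approx W(\nu)$ (from $W\in\calM$) to obtain the pointwise control
\[
|H_{\nu}(x)|\lesssim W(\nu)\,\Theta_{\nu}(x)\prod_{j=1}^{N}\|g_{j,\nu_{j}}\|_{L^{2}},\qquad
\Theta_{\nu}(x):=\bigl\|W^{-1}\sigma(x,\cdot)\bigr\|_{L^{2}(\prod_{j}(\nu_{j}+[-1,1]^{n}))}.
\]

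Next I would exploit the oscillation hidden in $H_{\nu}$. From $\supp\calF_{0}\sigma\subset B_{R_{0}}$ one finds $H_{\nu}(x)=e^{ix\cdot(\nu_{1}+\cdots+\nu_{N})}\tilde H_{\nu}(x)$ with $\tilde H_{\nu}$ having $x$-Fourier support in a ball of radius $\lesssim R_{0}+1$ independent of $\nu$. Grouping by $\mu=\nu_{1}+\cdots+\nu_{N}\in\Z^{n}$, I write $T_{\sigma}f=\sum_{\mu}G_{\mu}$ with $\widehat{G_{\mu}}\subset\mu+B_{R_{0}+O(1)}$; these supports overlap at most $O(R_{0}^{n})$ times, so Plancherel/Schur almost-orthogonality together with the square-function identifications in Lemmas~\ref{lemequivalentamalgam} and~\ref{SellequivL} (which convert $\|T_{\sigma}f\|_{(L^{2},\ell^{r})}$ into an $L^{r}$-norm of $S(|T_{\sigma}f|^{2})^{1/2}$) will produce the factor $R_{0}^{n/2}$. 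The remaining inner sum over $\{\nu:\sum_{j}\nu_{j}=\mu\}$ will be handled by the dual form of the $\calB_{N}$ inequality,
\[
\Bigl\|\sum_{\nu:\,\sum_{j}\nu_{j}=\mu}W(\nu)\prod_{j}A_{j}(\nu_{j})\Bigr\|_{\ell^{2}_{\mu}}\lesssim\prod_{j}\|A_{j}\|_{\ell^{2}},
\]
valid because $W|_{\ZnN}\in\calB_{N}$. Taking $A_{j}(\nu_{j})=\|g_{j,\nu_{j}}\|_{L^{2}}$ and noting the amalgam-local bound $\|\Theta_{\nu}\|_{L^{2}_{x}(\tau+Q)}\lesssim\|W^{-1}\sigma\|_{L^{2}_{ul}}$ (a consequence of the boundedness of the box in $(x,\xi)$) will then extract the desired $\|W^{-1}\sigma\|_{L^{2}_{ul}}$ factor.

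Finally, the factors $R_{j}^{n/\min(2,q_{j})}$ should come from a Bernstein-type argument at scale $R_{j}$ applied to the frequency-localized $g_{j,\nu_{j}}$'s: for $q_{j}\ge2$ the relevant sums over $\nu_{j}$ are effectively supported in $B_{R_{j}}$ (because $\calF\sigma$ is), giving the exponent $n/2$; for $q_{j}<2$ I intend to employ a scale-$R_{j}$ partition of unity in $\xi_{j}$, so that the $L^{2}_{ul}$-to-$L^{2}$ box-enlargement contributes an $R_{j}^{n}$ factor and yields the exponent $n/q_{j}$. The main obstacle will be cleanly interlocking the three structures---almost-orthogonality in $\mu=\sum_{j}\nu_{j}$, spatial localization in unit cubes via the amalgam norm, and the $\calB_{N}$ weighting in $\nu$---while accommodating arbitrary $r\in(0,\infty]$; the endpoints $r<1$ (where duality fails) and $q_{j}=\infty$ (where $f_{j}\notin L^{2}$ in general) force one to work through the operator $S$ and the Bernstein rescaling, and I expect these technicalities to dominate most of the proof, although the ``discretize--Cauchy--Schwarz--$\calB_{N}$'' pattern is robust.
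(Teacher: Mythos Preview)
Your overall architecture (Sugimoto--Boulkhemair decomposition, grouping by $\mu=\sum_j\nu_j$, use of the $\calB_N$ inequality) matches the paper's, but the pointwise step contains a genuine gap. Your Cauchy--Schwarz in $\xi$ produces the \emph{global} quantities $\|g_{j,\nu_j}\|_{L^2(\R^n)}$, and you then feed $A_j(\nu_j)=\|g_{j,\nu_j}\|_{L^2}$ into $\calB_N$. This collapses all spatial information about $f_j$; after summing $\sum_{\nu_j}\|g_{j,\nu_j}\|_{L^2}^2\approx\|f_j\|_{L^2}^2$ you can only recover $\|f_j\|_{L^2}$, not $\|f_j\|_{(L^2,\ell^{q_j})}$. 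For $q_j>2$ (and in particular $q_j=\infty$) this is the wrong norm, and no Bernstein argument recovers it, because the loss is structural: you have integrated $f_j$ over all of $\R^n$ before ever localizing. The paper avoids this by passing to the \emph{kernel} side: it writes $T_{\sigma_\nu}(\square_{\nu_1}f_1,\dots)(x)$ as an integral of $(\calF_{1,\dots,N}\sigma_\nu)(x,y_1-x,\dots,y_N-x)\prod_j\square_{\nu_j}f_j(y_j)$, observes that the support condition $\supp\calF_j\sigma\subset B_{R_j}$ forces $|y_j-x|\le 2R_j$, and only then applies Cauchy--Schwarz in $y$. This yields the $x$-dependent factors $\bigl(\mathbf 1_{B_{2R_j}}\ast|\square_{\nu_j}f_j|^2\bigr)^{1/2}(x)$, and the $\calB_N$ inequality is then applied with $A_j(\nu_j,\nu_0)=\{S(\mathbf 1_{B_{2R_j}}\ast|\square_{\nu_j}f_j|^2)(\nu_0)\}^{1/2}$ depending on the spatial cube index $\nu_0$. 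The $\ell^{q_j}_\mu$ norm is taken only at the very end, after H\"older in $\mu$ with $1/r=\sum_j 1/q_j$.

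Relatedly, your reading of the $R_j$-factor is inverted. The condition $\supp\calF_j\sigma\subset B_{R_j}$ is a Fourier-side constraint on the $\xi_j$-variable (i.e.\ a smoothness scale of $\sigma$ in $\xi_j$); it does \emph{not} restrict the $\xi_j$-support of $\sigma$, so the sum over $\nu_j$ is in no way confined to $|\nu_j|\lesssim R_j$. The correct role of $R_j$ is spatial: it bounds the support of the kernel in $y_j-x$, which is exactly what produces the convolution with $\mathbf 1_{B_{2R_j}}$ above. The $R_j^{n/\min(2,q_j)}$ then arises from covering $B_{2R_j}$ by $\approx R_j^n$ unit cubes and applying~\eqref{mixedtriangle} to the $\ell^{q_j/2}_\mu$ norm of $S(\mathbf 1_{B_{2R_j}}\ast|f_j|^2)$. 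Finally, regarding your worry about $r<1$: the paper does use duality, but only at the level of $L^2_x(\R^n)$ after first multiplying by a bump $\theta(x-\mu)$ with $\supp\widehat\theta\subset B_1$; the outer $\ell^r_\mu$ norm is never dualized, so no restriction on $r$ arises there.
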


\begin{proof}
In this proof, 
we will write $\boldsymbol{\xi}=(\xi_1,\dots,\xi_N) \in (\R^n)^N$
and $\boldsymbol{\nu}=(\nu_1,\dots,\nu_N) \in (\Z^n)^N$
for the sake of simplicity.
By this notation,  
$\sigma (x,\boldsymbol{\xi}) = \sigma (x,\xi_1,\dots,\xi_N)$,
$W(\boldsymbol{\xi}) = W(\xi_1, \dots, \xi_N)$,
$\boldsymbol{\xi} \pm \boldsymbol{\nu} = (\xi_1\pm \nu_1,\dots,\xi_N \pm \nu_N) $,
and $d\boldsymbol{\xi} = d\xi_1 \cdots d\xi_N$.

Take a $\theta \in \calS(\R^n)$ such that $|\theta| \geq c >0$ 
on $Q=[-1/2,1/2)^n$ and 
$\supp \widehat \theta \subset B_1$.
We realize from Lemma \ref{lemequivalentamalgam} that
\begin{equation*}
\left\| T_{ \sigma }( f_1, \dots, f_N ) \right\|_{(L^2,\ell^r)}
\approx
\left\| \theta (x-\mu) T_{ \sigma }( f_1, \dots, f_N )(x) \right\|_{L^2_x(\R^n) \ell^r_{\mu}(\Z^n) },
\end{equation*}
and by duality
\begin{equation}
\label{saishodual}
\left\| T_{ \sigma }( f_1, \dots, f_N ) \right\|_{(L^2,\ell^r)}
\approx
\left\| \sup_{\|g\|_{L^2}=1} \left| \int_{\R^n} \theta (x-\mu) T_{ \sigma }( f_1, \dots, f_N )(x) g(x) \, dx \right|\right\|_{ \ell^r_{\mu} }.
\end{equation}
Hence, in what follows we consider
\begin{equation*}
I = \int_{\R^n} \theta (x-\mu) T_{ \sigma }( f_1, \dots, f_N )(x) g(x) \, dx , 
\end{equation*}
for  $\mu\in\Z^n$ and $g\in L^2(\R^n)$.

Before estimating the integral $I$, we rewrite it. 
Firstly, by using Lemma \ref{unifdecom}, 
we decompose the symbol $\sigma$ as
\begin{align*}
\sigma ( x, \boldsymbol{\xi})
&=
\sum_{\boldsymbol{\nu}\in(\Z^n)^N} 
\sigma ( x, \boldsymbol{\xi} ) 
\prod_{j=1}^N \kappa (\xi_{j} - \nu_{j}) \chi (\xi_{j} - \nu_{j} )
\\&=
\sum_{\boldsymbol{\nu}\in(\Z^n)^N} 
\sigma_{\boldsymbol{\nu}} ( x, \boldsymbol{\xi} ) 
\prod_{j=1}^N \kappa (\xi_{j} - \nu_{j})
\end{align*}
with
\[
\sigma_{\boldsymbol{\nu}} ( x, \boldsymbol{\xi} ) 
=
\sigma ( x, \boldsymbol{\xi} ) 
\prod_{j=1}^N \chi (\xi_{j} - \nu_{j} ).
\]
Then, by denoting
the Fourier multiplier operator 
$\kappa (D - \nu_{j})$
by $\square_{\nu_{j}}$, $j=1,\dots,N$,  
the integral $I$ is written as 
\begin{align}
\label{decomposedI}
\begin{split}
I&=
\sum_{\boldsymbol{\nu}\in(\Z^n)^N} 
\int_{\R^n} \theta (x-\mu)
T_{ \sigma_{\boldsymbol{\nu}} }( \kappa(D-\nu_1)f_1, \dots, \kappa(D-\nu_N) f_N )(x) 
g(x) 
\, dx 
\\&=
\sum_{\boldsymbol{\nu}\in(\Z^n)^N} 
\int_{\R^n} \theta (x-\mu)
T_{ \sigma_{\boldsymbol{\nu}} }( \square_{\nu_1}f_1, \dots, \square_{\nu_N}f_N )(x) 
g(x) 
\, dx .
\end{split}
\end{align}
(The idea of decomposing 
pseudo-differential operators 
by the use of  
$\kappa$ and $\chi$ goes back to 
Sugimoto \cite{Sugimoto}.)

Secondly, in the integral of \eqref{decomposedI}, 
we transfer the information of the Fourier transform of 
$\theta (\cdot-\mu) T_{ \sigma_{\boldsymbol{\nu}} }( \square_{\nu_1}f_1, \dots, \square_{\nu_N}f_N )$ to $g$.
Taking the Fourier transform to 
$T_{ \sigma_{\boldsymbol{\nu}} } (\cdots)$,
we have
\begin{align*}
&(2\pi)^{Nn}
\calF \left[ T_{ \sigma_{\boldsymbol{\nu}} }( \square_{\nu_1}f_1, \dots, \square_{\nu_N}f_N ) \right](\zeta)
\\&=
\int_{ (\R^{n})^N }
\big( \calF_0 \sigma_{\boldsymbol{\nu}} \big) \big( \zeta - (\xi_1 +\cdots+ \xi_N) , \boldsymbol{\xi} \big) 
\, \prod_{j=1}^N \kappa (\xi_{j}-\nu_{j}) \widehat{f_{j}}(\xi_{j})
\, d\boldsymbol{\xi} .
\end{align*}
Combining this with the facts
$\supp \calF_{0} \sigma_{\boldsymbol{\nu}} (\cdot, \boldsymbol{\xi} ) \subset B_{R_0}$
and $\supp \kappa(\cdot-\nu_{j}) \subset \nu_{j}+[-1,1]^n$, $j=1,\dots,N$, 
we have
\begin{equation*}
\supp 
\calF \left[ T_{ \sigma_{\boldsymbol{\nu}} }( \square_{\nu_1}f_1, \dots, \square_{\nu_N}f_N ) \right]
\subset 
\big\{ \zeta \in \R^n : | \zeta-(\nu_1 +\dots+ \nu_N) | \lesssim R_0 \big\}.
\end{equation*}
Since $\supp \widehat \theta \subset B_1$, we see that 
\begin{equation*}
\supp 
\calF \left[ \theta (\cdot-\mu)T_{ \sigma_{\boldsymbol{\nu}} }( \square_{\nu_1}f_1, \dots, \square_{\nu_N}f_N ) \right]
\subset 
\big\{ \zeta \in \R^n : | \zeta-(\nu_1 +\dots+ \nu_N) | \lesssim R_0 \big\}
\end{equation*}
for any $\mu\in\Z^n$.
We take a function $\varphi \in \calS(\R^n)$ such that $\varphi = 1$ 
on $\{ \zeta\in\R^n: |\zeta|\lesssim 1\}$. 
Then the integral $I$ given in \eqref{decomposedI} 
can be further rewritten as
\begin{align}
\label{decomposedFouriertransI}
\begin{split}
I=
\sum_{\boldsymbol{\nu}\in(\Z^n)^N} \int_{\R^n} 
&\theta (x-\mu) T_{ \sigma_{\boldsymbol{\nu}} }
( \square_{\nu_1}f_1, \dots, \square_{\nu_N}f_N )(x) 
\\
&\times \varphi 
\left( \frac{D+ \nu_1 +\dots+ \nu_N}{R_0} \right) 
g (x) \, dx.
\end{split}
\end{align}

Now, we shall actually estimate the integral $I$ given 
in \eqref{decomposedFouriertransI}.
Observing that
\[
\left(
\calF_{1,\dots,N} \sigma_{\boldsymbol{\nu}} \right)
(x, \eta_1, \dots, \eta_N)
= 
\left(
\calF_{1,\dots,N} \sigma \right)
(x, \eta_1, \dots, \eta_N) 
\ast_{\boldsymbol{\eta}} 
\left( \prod_{j=1}^N e^{-i \nu_{j} \cdot \eta_{j}} \widehat{\chi}(\eta_{j}) \right)
\]
and then recalling that
$\supp \big(\calF_{1,\dots,N} \sigma \big)
(x, \cdot, \dots, \cdot)
\subset B_{R_1} \times \cdots \times B_{R_N}$
and $\supp \widehat{\chi} \subset B_1$,
we see that
\[
\supp \big(\calF_{1,\dots,N} \sigma_{\boldsymbol{\nu}} 
\big) (x, \cdot, \dots, \cdot)
\subset  
B_{2R_1} \times \cdots \times B_{2R_N}.
\]
This yields that
\begin{align*}
&
(2\pi)^{Nn} \, T_{ \sigma_{\boldsymbol{\nu}} }
( \square_{\nu_1}f_1, \dots, \square_{\nu_N}f_N )(x) 
\\&=
\int_{(\R^{n})^{N}}
\big( \calF_{1,\dots,N} \sigma_{\boldsymbol{\nu}} \big) (x, y_1-x, \dots, y_N-x)
\, \prod_{j=1}^N \square_{\nu_{j}} f_{j} (y_{j}) 
\, d\boldsymbol{y}
\\&=
\int_{(\R^{n})^{N}}
\big( \calF_{1,\dots,N} \sigma_{\boldsymbol{\nu}} \big) (x, y_1-x, \dots, y_N-x)
\prod_{j=1}^N \ichi_{B_{2R_{j}}}(x-y_{j}) \square_{\nu_{j}} f_{j} (y_{j}) 
\, d\boldsymbol{y},
\end{align*}
where, we wrote $d\boldsymbol{y}=dy_1 \cdots dy_N$. 
Then the Cauchy--Schwarz inequality
and the Plancherel theorem give
\begin{align*}
\left| T_{ \sigma_{\boldsymbol{\nu}} }( \square_{\nu_1}f_1, \dots, \square_{\nu_N}f_N )(x) \right|
\lesssim
\big\| \sigma_{\boldsymbol{\nu}} (x,\boldsymbol{\xi}) \big\|_{L^2_{\boldsymbol{\xi}}}
\prod_{j=1}^N
\left\{ \left( \mathbf{1}_{B_{2R_{j}}} \ast \big| \square_{\nu_{j}} f_{j} \big|^2 \right) (x) \right\}^{1/2}
\end{align*}
for any ${\boldsymbol{\nu}} = (\nu_1, \dots, \nu_N) \in \ZnN$ and $x \in \R^n$.
From this, the expression $I$ given in \eqref{decomposedFouriertransI} is estimated as
\begin{align}
\label{Ibeforediscretize}
\begin{split}
|I| \lesssim
\sum_{\boldsymbol{\nu}\in\ZnN} 
\int_{\R^n} & |\theta (x-\mu)| \,
\big\| \sigma_{\boldsymbol{\nu}} (x,\boldsymbol{\xi}) \big\|_{L^2_{\boldsymbol{\xi}}}
\prod_{j=1}^N
\left\{ \Big( \mathbf{1}_{B_{2R_{j}}} \ast \big|\square_{\nu_{j}} f_{j} \big|^2 \Big) (x) \right\}^{1/2}
\\
&\times 
\left| \varphi \left( \frac{D+ \nu_1 +\dots+ \nu_N }{R_0} \right) g (x) \right|
\, dx.
\end{split}
\end{align}

We next separate the integral by using \eqref{cubicdiscretization}.
Then the inequality \eqref{Ibeforediscretize} is identical with
\begin{align*}
|I| \lesssim
\sum_{\nu_0\in\Z^n} 
\sum_{\boldsymbol{\nu}\in\ZnN} 
\int_{Q} &|\theta (x+\nu_0-\mu)| \,
\big\| \sigma_{\boldsymbol{\nu}} (x+\nu_0,\boldsymbol{\xi}) \big\|_{L^2_{\boldsymbol{\xi}}}
\\
&\times
\prod_{j=1}^N
\left\{ \Big( \mathbf{1}_{B_{2R_{j}}} \ast \big|\square_{\nu_{j}} f_{j} \big|^2 \Big) (x+\nu_0) \right\}^{1/2}
\\
&\times 
\left| \varphi \left( \frac{D + \nu_1 +\dots+ \nu_N}
{R_0} \right) g (x+\nu_0) \right|
\, dx.
\end{align*}
Now, in this expression, 
we have $|\theta (x+\nu_0-\mu)| \lesssim \langle \nu_0-\mu \rangle^{-L}$ 
for any $x\in Q$ and $L> 0$ (we will choose a sufficiently large number 
$L$ in the forthcoming argument). 
Moreover, from Lemma \ref{propertiesofS} (5), (1), and (3), we have 
\begin{equation*}
\Big( \mathbf{1}_{B_{2R_{j}}} \ast \big|\square_{\nu_{j}} f_{j}\big|^2 \Big) (x+\nu_0)
\lesssim
S \Big( \mathbf{1}_{B_{2R_{j}}} \ast \big|\square_{\nu_{j}} f_{j} \big|^2 \Big) (\nu_0)
\end{equation*}
for $x\in Q$.
Hence,
\begin{align*}
|I| &\lesssim
\sum_{\nu_0\in\Z^n} 
\sum_{\boldsymbol{\nu}\in\ZnN} 
\langle \nu_0-\mu \rangle^{-L}
\prod_{j=1}^N
\left\{ S \Big( \mathbf{1}_{B_{2R_{j}}} \ast \big|\square_{\nu_{j}} f_{j} \big|^2 \Big) (\nu_0) \right\}^{1/2}
\\
&\quad \times 
\int_{Q} 
\big\| \sigma_{\boldsymbol{\nu}} (x+\nu_0,\boldsymbol{\xi}) \big\|_{L^2_{\boldsymbol{\xi}}}
\left| \varphi \left( \frac{D+\nu_1 +\dots+ \nu_N}{R_0} \right) g (x+\nu_0) \right|
\, dx.
\end{align*}
Using the Cauchy--Schwarz inequality to the integral on $x$, 
we obtain
\begin{align}
\label{Iafterdiscretize}
\begin{split}
|I|
&\lesssim
\sum_{\nu_0\in\Z^n} 
\sum_{\boldsymbol{\nu}\in\ZnN} 
\langle \nu_0-\mu \rangle^{-L}
\prod_{j=1}^N
\left\{ S \Big( \mathbf{1}_{B_{2R_{j}}} \ast \big|\square_{\nu_{j}} f_{j} \big|^2 \Big) (\nu_0) \right\}^{1/2}
\\
&\times
\big\| \sigma_{\boldsymbol{\nu}} (x+\nu_0,\boldsymbol{\xi}) \big\|_{L^2_{\boldsymbol{\xi}} L^2_x(Q)}
\left\| \varphi \left( \frac{D+\nu_1 +\dots+ \nu_N}{R_0} \right) g (x+\nu_0) \right\|_{L^2_x(Q)}.
\end{split}
\end{align}

Using the properties of the 
moderate function $W$, 
we shall prove 
\begin{equation}\label{vsigmaL2ul}
\sup_{\nu_0, \boldsymbol{\nu}}
\left\{
W (\boldsymbol{\nu})^{-1}
\big\| \sigma_{\boldsymbol{\nu}} 
(x+\nu_0,\boldsymbol{\xi}) 
\big\|_{L^2_{\boldsymbol{\xi}} L^2_x(Q)}
\right\}
\approx 
\|
W (\boldsymbol{\xi})^{-1} 
\sigma (x, \boldsymbol{\xi})
\|_{L^2_{ul} ((\R^n)^{N+1})} .
\end{equation}
Indeed, from the property 
\eqref{moderateboth} of moderate functions, 
we see that 
\begin{align*}
&
W (\boldsymbol{\nu})^{-1}
\big\| 
\sigma_{\boldsymbol{\nu}} (x+\nu_0,\boldsymbol{\xi}) 
\big\|_{L^2_{\boldsymbol{\xi}} L^2_x(Q)}
\\
&=
W (\boldsymbol{\nu})^{-1}
\left\| 
\sigma (x+\nu_0,\boldsymbol{\xi}) 
\ichi_{Q} (x) 
\prod_{j=1}^N \chi(\xi_j-\nu_j)
\right\|_{L^2_{ (x, \boldsymbol{\xi}) } (\R^{n(N+1)})}
\\
&
\lesssim
\left\| 
W (\boldsymbol{\xi})^{-1}
\sigma (x+\nu_0,\boldsymbol{\xi}) 
\ichi_{Q} (x) 
\langle (\boldsymbol{\xi}-\boldsymbol{\nu}) \rangle^{M}
\prod_{j=1}^N \chi(\xi_j-\nu_j)
\right\|_{L^2_{ (x, \boldsymbol{\xi}) } (\R^{n(N+1)})}
\\
&=
\left\|
W (\boldsymbol{\xi})^{-1}
\sigma (x,\boldsymbol{\xi})
\ichi _{Q}(x-\nu_0) 
\langle (\boldsymbol{\xi}-\boldsymbol{\nu}) \rangle^{M}
\prod_{j=1}^N \chi(\xi_j-\nu_j)
\right\|_{L^2_{ (x, \boldsymbol{\xi}) } (\R^{n(N+1)})}
=(\ast)
\end{align*}
for some $M>0$.
Here, 
from the fact $|\chi (y)|\ge c$ for $y\in Q$ and 
$\chi \in \calS$, we have 
\[
\ichi _{ [-1/2,1/2)^{(N+1)n} } (x,  \boldsymbol{\xi})
\lesssim 
\left|
\ichi _{Q}(x)
\langle \boldsymbol{\xi} \rangle^{M} 
\prod_{j=1}^N \chi(\xi_j) 
\right|
\lesssim 
\langle (x, \boldsymbol{\xi}) \rangle ^{-L}
\]
for any $L>0$.
Hence Lemma \ref{lemequivalentamalgam} implies that 
\[
\sup_{\boldsymbol{\nu}, \nu_0} 
(\ast) 
\approx 
\left\|
W (\boldsymbol{\xi})^{-1}
\sigma (x,\boldsymbol{\xi})
\right\|_{(L^2, \ell^{\infty} )(\R^{n(N+1)}) }
=
\left\|
W (\boldsymbol{\xi})^{-1}
\sigma (x,\boldsymbol{\xi})
\right\|_{L^2_{ul} (\R^{n(N+1)}) }. 
\]
Therefore we obtain the 
inequality $\lesssim $ of 
\eqref{vsigmaL2ul}.  
The opposite inequality $\gtrsim$ of \eqref{vsigmaL2ul} 
can be proved in a similar way.

Now, we get back to the estimate of \eqref{Iafterdiscretize}.
By virtue of \eqref{vsigmaL2ul}, 
we are able to derive from the inequality \eqref{Iafterdiscretize}
\begin{align}
\label{beforeAi}
\begin{split}
|I|
&\lesssim
\|
W (\boldsymbol{\xi})^{-1} 
\sigma (x, \boldsymbol{\xi})
\|_{L^2_{ul} ((\R^n)^{N+1})}
\\
&\quad\times
\sum_{\nu_0\in\Z^n} 
\langle \nu_0-\mu \rangle^{-L}
\sum_{\boldsymbol{\nu}\in\ZnN} 
W (\boldsymbol{\nu})
\prod_{j=1}^N
\left\{ S \Big( \mathbf{1}_{B_{2R_j}} \ast \big|\square_{\nu_j} f_j \big|^2 \Big) (\nu_0) \right\}^{1/2}
\\
&\quad\times
\left\| \varphi \left( \frac{D+ \nu_1 +\dots+ \nu_N}
{R_0} \right) g (x+\nu_0) \right\|_{L^2_x(Q)}.
\end{split}
\end{align}
In what follows, we simply write each summand by
\begin{align}\label{AjA0}
\begin{split}
A_j (\nu_j,\nu_0)&=\left\{ S \left( \mathbf{1}_{B_{2R_j}} \ast \big| \square_{\nu_j} f_j \big|^2 \right)(\nu_0) \right\}^{1/2},
\\
A_0(\mu,\nu_0)
&=
\left\|\varphi \left( \frac{D+\mu}{R_0} \right) g(x+\nu_0) \right\|_{L^2_x(Q)}
\end{split}
\end{align}
for $j=1,\dots,N$.
Then the inequality \eqref{beforeAi} is rewritten as
\begin{equation}
\label{IwithII}
|I| \lesssim
\|
W (\boldsymbol{\xi})^{-1} 
\sigma (x, \boldsymbol{\xi})
\|_{L^2_{ul} ((\R^n)^{N+1})}
\, \II
\end{equation}
with
\begin{equation}\label{II}
\II=
\sum_{\nu_0\in\Z^n} 
\langle \nu_0-\mu \rangle^{-L}
\sum_{\boldsymbol{\nu}\in\ZnN} 
W (\boldsymbol{\nu}) 
A_0(\nu_1 +\dots+ \nu_N, \nu_0)
\prod_{j=1}^N A_j (\nu_j,\nu_0).
\end{equation}

We shall estimate $\II$. 
Since the restriction of $W$ to $\ZnN$ belongs 
to the class $\calB_N (\ZnN)$,
we have
\begin{equation*}
\II
\lesssim 
\sum_{ \nu_0 \in \Z^{n} }
\langle \nu_0-\mu \rangle^{-L}
\| A_0 (\tau,\nu_0) \|_{\ell^2_{\tau}}
\prod_{j=1}^N 
\| A_j (\nu_j,\nu_0) \|_{\ell^2_{\nu_j}}. 
\end{equation*}
Using the H\"older inequality with 
the exponents $1=1/2 + \sum_{j=1}^N 1/(2N)$
to the above sum over $\nu_0$, we 
have 
\begin{equation}\label{IIHolder}
\II
\lesssim 
\| A_0 (\tau,\nu_0) \|_{ \ell^2_{\tau} \ell^{2}_{\nu_0} }
\prod_{j=1}^N 
\| \langle \nu_0-\mu \rangle^{-L/N} A_j (\nu_j,\nu_0) \|_{ \ell^2_{\nu_j} \ell^{2N}_{\nu_0}}. 
\end{equation}
Here, the norm of $A_0$ in \eqref{IIHolder}  
is estimated by the Plancherel theorem as follows:
\begin{align}\label{A0result}
\begin{split}
&\| A_0 (\tau,\nu_0) \|_{ \ell^2_{\tau} \ell^{2}_{\nu_0} }
=
\left\|\varphi \left( \frac{D+\tau}{R_0} \right) g(x+\nu_0) \right\|_{L^2_x(Q) \ell^2_{\tau} \ell^{2}_{\nu_0}}
\\
&=\left\|\varphi \left( \frac{D+\tau}{R_0} \right) g(x)
\right\|_{L^2_x(\R^n) \ell^2_{\tau}}
\approx
\left\|\varphi \left( \frac{\zeta +\tau}{R_0} \right) \widehat g(\zeta) \right\|_{L^2_\zeta(\R^n) \ell^2_{\tau}}
\approx 
R_{0}^{n/2} \| g \|_{L^2},
\end{split}
\end{align}
where we used that $\| \varphi ( \frac{\zeta +\tau}{R_0} ) \|_{\ell^2_\tau} \approx R_0^{n/2}$ 
for any $\zeta \in \R^n$ to have the last inequality.
Hence, by collecting \eqref{IwithII}, \eqref{IIHolder}, and \eqref{A0result},
we have
\begin{equation}
\label{resultofI}
|I|\lesssim
R_{0}^{n/2} \|
W (\boldsymbol{\xi})^{-1} 
\sigma (x, \boldsymbol{\xi})
\|_{L^2_{ul} ((\R^n)^{N+1})}
\, \| g \|_{L^2}
\, \prod_{j=1}^N 
\left\| \langle \nu_0-\mu \rangle^{-L/N} A_j (\nu_j,\nu_0) \right\|_{ \ell^2_{\nu_j} \ell^{2N}_{\nu_0}}.
\end{equation}

We substitute \eqref{resultofI} into \eqref{saishodual}, 
and then use the H\"older inequality for the 
quasi-norm $\ell^r_{\mu}$
with $1/r=\sum 1/q_j$
to obtain
\begin{align}\label{atodual}
\begin{split}
&\left\| T_{ \sigma }( f_1, \dots, f_N ) \right\|_{(L^2,\ell^r)}
\\
&\lesssim
R_{0}^{n/2} \|
W (\boldsymbol{\xi})^{-1} 
\sigma (x, \boldsymbol{\xi})
\|_{L^2_{ul}}
\, \prod_{j=1}^N 
\left\| \langle \nu_0-\mu \rangle^{-L/N} A_j (\nu_j,\nu_0) \right\|_{ \ell^2_{\nu_j} \ell^{2N}_{\nu_0} \ell^{q_j}_{\mu}}.
\end{split}
\end{align}

To achieve our goal, we shall estimate 
the norm of $A_j$ in \eqref{atodual}:
\begin{align*}
&
\left\| \langle \nu_0-\mu \rangle^{-L/N} A_j (\nu_j,\nu_0) \right\|_{ \ell^2_{\nu_j} \ell^{2N}_{\nu_0} \ell^{q_j}_{\mu}}
\\
&=
\bigg\| \langle \nu_0-\mu \rangle^{-2L/N} 
S \Big( \mathbf{1}_{B_{2R_j}} \ast 
\sum_{\nu_j} 
\big| \square_{\nu_{j}} f_{j} \big|^2 
\Big) (\nu_0) 
\bigg\|_{\ell^{N}_{\nu_0} \ell^{q_j/2}_{\mu} } 
^{1/2} 
=(\ast \ast).
\end{align*}
We use the inequality  
$\sum_{\nu_j} 
\big| \square_{\nu_{j}} f_{j} \big|^2 
\lesssim 
S (|f_j|^2)$ 
of Lemma \ref{estSRx} 
and also use Lemma \ref{propertiesofS} (1) and (2), 
change of variables, and the inequality 
\eqref{mixedtriangle} to obtain 
\begin{align*}
(\ast \ast )
&\lesssim
\bigg \| \langle \nu_0 \rangle^{-2L/N}
S \Big( \mathbf{1}_{B_{2R_j}} \ast | f_j |^2 
\Big)(\nu_0+\mu) \bigg\|_{ \ell_{\nu_0}^{N} \ell^{q_j/2}_{\mu} }^{1/2}
\\
&
\lesssim
\bigg \| 
S \Big( \mathbf{1}_{B_{2R_j}} \ast | f_j |^2 
\Big)(\mu) \bigg\|_{ \ell^{q_j/2}_{\mu} }^{1/2}, 
\end{align*}
where the 
latter $\lesssim$ holds if $L$ 
is suitably large. 
Thus 
\begin{equation}\label{tochuukeisanAj}
\left\| \langle \nu_0-\mu \rangle^{-L/N} 
A_j (\nu_j,\nu_0) \right\|_{ \ell^2_{\nu_j} \ell^{2N}_{\nu_0} \ell^{q_j}_{\mu}}
\lesssim
\left\| 
S \left( \mathbf{1}_{B_{2R_j}} \ast | f_j |^2 \right)
(\mu) \right\|_{ \ell^{q_j/2}_{\mu} }^{1/2}.
\end{equation}
Here, since $B_R$, $R\geq1$, is covered by a disjoint union of the cubes 
$\tau + Q$, $\tau \in \Z^n \cap [-R-1,R+1]^n$, 
the characteristic function $\ichi_{B_R}$ 
can be bounded by
\begin{equation*}
\ichi_{B_R} \leq
\sum_{\tau \in \Z^n \cap [-R-1,R+1]^n} \ichi_{\tau+Q}
=
\sum_{\tau \in \Z^n \cap [-R-1,R+1]^n} \ichi_{Q}(\cdot-\tau).
\end{equation*}
This yields 
\begin{align*}
S \left( \mathbf{1}_{B_{2R_j}} \ast | f_j |^2 \right)(\mu)
\lesssim
\sum_{|\tau|\lesssim R_j}
S \left( \mathbf{1}_{Q} \ast | f_j |^2 \right)(\mu - \tau)
\approx
\left\| S \left( | f_j |^2 \right)(\mu - \tau) \right\|_{\ell^1(\{|\tau|\lesssim R_j\})} ,
\end{align*}
where the last $\approx$ follows from 
Lemma \ref{propertiesofS} (4). 
Thus, by \eqref{mixedtriangle} and Lemma \ref{SellequivL} (1) and (2), 
\begin{align*}
&\left\| S \left( \mathbf{1}_{2B_{R_j}} 
\ast | f_j |^2 \right)(\mu) \right\|_{ \ell^{q_j/2}_{\mu} }
\lesssim
\left\|
S \left( | f_j |^2 \right)(\mu - \tau) 
\right\|_{ \ell^{1}(\{|\tau| \lesssim R_j \}) \, \ell^{q_j/2}_{\mu} }
\\
&
\le 
\left\|
S \left( | f_j |^2 \right)(\mu - \tau) 
\right\|_{ \ell^{q_j/2}_{\mu} 
\ell^{ \min (1, q_j/2 )}(\{|\tau| \lesssim R_j\}) }
\approx 
R_j^{n/\min(1,q_j/2)}\left\| 
S \left( | f_j |^2 \right)(\mu) 
\right\|_{ \ell^{q_j/2}_{\mu} }
\\
&\approx 
R_j^{n/\min(1,q_j/2)} \| f_j \|_{(L^2,\ell^{q_j})}^2, 
\end{align*}
which combined with 
\eqref{tochuukeisanAj} gives 
\begin{equation}\label{estofAj}
\left\| \langle \nu_0-\mu \rangle^{-L/N} 
A_j (\nu_j,\nu_0) \right\|_{ \ell^2_{\nu_j} \ell^{2N}_{\nu_0} \ell^{q_j}_{\mu}}
\lesssim
R_j^{n/\min(2,q_j)} \| f_j \|_{(L^2,\ell^{q_j})}.
\end{equation}

Substituting \eqref{estofAj} into \eqref{atodual}, we obtain
\begin{align*}
&\left\| T_{ \sigma }( f_1, \dots, f_N ) \right\|_{(L^2,\ell^r)}
\\
&\lesssim
R_{0}^{n/2} \left( \prod_{j=1}^N 
R_j^{n/\min(2,q_j)} \right) 
\|
W (\boldsymbol{\xi})^{-1} 
\sigma (x, \boldsymbol{\xi})
\|_{L^2_{ul}}
\prod_{j=1}^N  \| f_j \|_{(L^2,\ell^{q_j})},
\end{align*}
which completes the proof.
\end{proof}

\subsection{A theorem for symbols with 
limited smoothness}
\label{subsectionMainTh}

From Proposition \ref{main-prop}, 
we shall deduce a theorem concerning 
the multilinear pseudo-differential 
operators $T_{\sigma}$ with 
symbols of limited smoothness. 
To measure the smoothness of such symbols, 
we shall use Besov type norms.  
To define the Besov type norms, 
we use the partition of unity given as follows.
Take a $\phi \in \calS (\R^n)$ 
such that $\phi (y)= 1$ for 
$ | y | \leq 1 $  
and 
$\supp \phi \subset 
\{ y \in \R^n : | y | \leq 2 \}$.  
We put $\psi (y)= \phi (y) - \phi (2y)$. 
Then 
$\supp \psi \subset 
\{ y \in \R^n : 1/2 \leq | y | \leq 2 \}$.
We set 
$\psi_0 = \phi $ and 
$\psi_k = \psi (\cdot / 2^k )$ for $k \in \N$.
Then $\sum_{k=0}^{\infty} \psi_{k} (y) = 1$ 
for all $y \in \R^n$. 
We shall call $\{\psi_k\}_{k\in \N_0}$ a  
Littlewood--Paley partition of unity on $\R^n$.  
It is easy to see that 
the Besov type norms given in the following definition 
do not depend, up to the equivalence of norms, 
on the choice of Littlewood--Paley partition of unity.

\begin{defn}\label{defBSW2} 
Let $W\in \calM (\R^{Nn})$. 
Let $\{\psi_{k}\}_{ k\in \N_0 }$ 
be a Littlewood--Paley 
partition of unity on $\R^n$.
For 
\begin{align*}
\boldsymbol{k} 
&=
(k_0, k_1, \dots, k_N) \in (\N_0)^{N+1}, 
\\
\boldsymbol{s}
&=
(s_0, s_1, \dots, s_N) \in [0, \infty)^{N+1}, 
\end{align*}
and
$\sigma = \sigma (x, \xi_1, \dots, \xi_N) 
\in L^{\infty} 
(\R^n_{x}\times 
\R^n_{\xi_1}\times \dots \times
\R^n_{\xi_N})$, 
we write 
$\boldsymbol{s}\cdot 
\boldsymbol{k}
=
\sum_{j=0}^{N}
s_{j} k_{j}$ and
\begin{equation*}
\Delta_{\boldsymbol{k}} 
\sigma (x, \xi_1, \dots, \xi_N)
=
\psi_{k_0} ( D_x )
\psi_{k_1} ( D_{\xi_1} )
\cdots
\psi_{k_N} ( D_{\xi_N} )
\sigma (x, \xi_1, \dots, \xi_N).
\end{equation*}
We denote by $S^{W}_{0,0} (\boldsymbol{s}, t ; \R^n, N)$ 
for $t \in (0, \infty]$
the set of all $\sigma \in L^{\infty} ((\R^n)^{N+1})$ for which 
the following quasi-norm is finite: 
\begin{align*}
&
\|\sigma\|_{ S^{W}_{0,0} (\boldsymbol{s},t; \R^n, N) } 
\\
&=
\left\{
 \sum_{ \boldsymbol{k} \in (\N_0)^{N+1}}\, 
 \left( 2^{ \boldsymbol{s}\cdot \boldsymbol{k} } \right)^{t}
\big\|
W (\xi_1, \dots, \xi_N)^{-1}
\Delta_{\boldsymbol{k}} 
\sigma (x, \xi_1, \dots, \xi_N)
\big\|_{ L^2_{ul} ((\R^{n})^{N+1}) }^t \right\}^{1/t}
\end{align*}
with usual modification when $t=\infty$.
\end{defn}

In terms of these notations, 
the theorem reads as follows.  

\begin{thm}\label{main-thm-2} 
Let 
$W \in \calM (\R^{Nn})$ 
and suppose 
the restriction of $W$ to $(\Z^n)^N$ belongs 
to the class $\calB_N ((\Z^n)^N)$. 
Let  $q_j, r \in (0,\infty]$ and $s_0, s_{j} \in [0,\infty)$,
$j=1,\dots,N$,
satisfy 
$\sum_{j=1}^N 1/q_j = 1/ r$, 
$s_0 = n/2$, and
$s_{j} = n/\min(2, q_{j})$.
Then 
the multilinear  
pseudo-differential operator 
$T_{\sigma}$ for 
$\sigma \in 
S^{W}_{0,0} (\boldsymbol{s},\min(1,r); \R^n, N)$ 
is bounded from 
$(L^2,\ell^{q_1}) (\R^n) \times \cdots \times (L^2,\ell^{q_N}) (\R^n)$ 
to 
$(L^2, \ell^r)(\R^n)$. 
\end{thm}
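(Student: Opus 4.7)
The plan is to deduce Theorem \ref{main-thm-2} from the key Proposition \ref{main-prop} by decomposing $\sigma$ dyadically in all variables $(x,\xi_1,\dots,\xi_N)$ and summing the resulting pieces. Concretely, using the Littlewood--Paley partition $\{\psi_k\}_{k\in\N_0}$ fixed in Definition \ref{defBSW2}, I would write
\[
\sigma = \sum_{\boldsymbol{k}\in(\N_0)^{N+1}} \Delta_{\boldsymbol{k}} \sigma,
\qquad
\Delta_{\boldsymbol{k}}\sigma
= \psi_{k_0}(D_x)\psi_{k_1}(D_{\xi_1})\cdots\psi_{k_N}(D_{\xi_N})\sigma.
\]
Since $\supp \psi_{k_j}\subset B_{2^{k_j+1}}$, the full Fourier transform of $\Delta_{\boldsymbol{k}}\sigma$ is supported in $B_{R_0}\times B_{R_1}\times\cdots\times B_{R_N}$ with $R_j\approx 2^{k_j}$. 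For each Schwartz $f_j$, the piece $T_{\Delta_{\boldsymbol{k}}\sigma}(f_1,\dots,f_N)$ is then well defined, and $T_\sigma(f_1,\dots,f_N)=\sum_{\boldsymbol{k}} T_{\Delta_{\boldsymbol{k}}\sigma}(f_1,\dots,f_N)$ with absolute convergence in, say, $\calS'$.

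Next, I would apply Proposition \ref{main-prop} to each $\Delta_{\boldsymbol{k}}\sigma$, using the hypothesis that the restriction of $W$ to $\ZnN$ lies in $\calB_N(\ZnN)$. This yields
\[
\bigl\| T_{\Delta_{\boldsymbol{k}}\sigma}\bigr\|_{(L^2,\ell^{q_1})\times\cdots\times(L^2,\ell^{q_N})\to(L^2,\ell^r)}
\lesssim 2^{k_0 n/2}\prod_{j=1}^N 2^{k_j n/\min(2,q_j)}\,
\bigl\| W^{-1}\Delta_{\boldsymbol{k}}\sigma\bigr\|_{L^2_{ul}},
\]
and by the choice $s_0=n/2$, $s_j=n/\min(2,q_j)$, the prefactor is exactly $2^{\boldsymbol{s}\cdot\boldsymbol{k}}$.

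Finally, I would sum over $\boldsymbol{k}$ by using the $t$-triangle inequality in the quasi-Banach space $(L^2,\ell^r)$ with $t=\min(1,r)$ (which holds because $|a+b|^t\le |a|^t+|b|^t$ for scalars and the $\ell^r$-norm inherits this property). This gives
\[
\bigl\| T_\sigma(f_1,\dots,f_N)\bigr\|_{(L^2,\ell^r)}^{t}
\le
\sum_{\boldsymbol{k}} \bigl\| T_{\Delta_{\boldsymbol{k}}\sigma}(f_1,\dots,f_N)\bigr\|_{(L^2,\ell^r)}^{t},
\]
and inserting the dyadic estimate, the right-hand side is controlled by
\[
\Bigl(\prod_{j=1}^N \|f_j\|_{(L^2,\ell^{q_j})}^{t}\Bigr)\sum_{\boldsymbol{k}} \bigl(2^{\boldsymbol{s}\cdot\boldsymbol{k}}\bigr)^{t}
\bigl\|W^{-1}\Delta_{\boldsymbol{k}}\sigma\bigr\|_{L^2_{ul}}^{t}
=\|\sigma\|_{S^W_{0,0}(\boldsymbol{s},t;\R^n,N)}^{t}\prod_{j=1}^N \|f_j\|_{(L^2,\ell^{q_j})}^{t}.
\]
Taking the $t$-th root completes the proof.

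The main obstacle I expect is bookkeeping rather than any new idea: verifying that the Fourier-support bounds of $\Delta_{\boldsymbol{k}}\sigma$ translate cleanly into the values $R_j\approx 2^{k_j}$ required by Proposition \ref{main-prop}, and carefully justifying the swap $T_\sigma = \sum T_{\Delta_{\boldsymbol{k}}\sigma}$ on Schwartz inputs together with the $t$-triangle inequality in $(L^2,\ell^r)$ when $r<1$. The choice $t=\min(1,r)$ is precisely why the Besov-type quasi-norm in Definition \ref{defBSW2} is taken with summability index $\min(1,r)$; any larger index would not be summable against the triangle inequality, and any smaller index would give more than needed.
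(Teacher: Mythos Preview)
Your proposal is correct and follows essentially the same approach as the paper: decompose $\sigma=\sum_{\boldsymbol{k}}\Delta_{\boldsymbol{k}}\sigma$ via the Littlewood--Paley partition, apply Proposition \ref{main-prop} to each piece with $R_j=2^{k_j+1}$, and sum using the $\min(1,r)$-triangle inequality in $(L^2,\ell^r)$ to recover the $S^W_{0,0}(\boldsymbol{s},\min(1,r);\R^n,N)$ norm. The paper's proof is slightly terser but the steps and the key estimates are identical to yours.
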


\begin{proof}
Theorem \ref{main-thm-2} can be reduced to
Proposition \ref{main-prop} as follows. 
We decompose the symbol $\sigma$ 
by using the Littlewood--Paley partition: 
\begin{equation*}
\sigma (x, \xi_1, \dots, \xi_N)
=
\sum_{ \boldsymbol{k} \in (\N_0)^{N+1}} 
\Delta_{\boldsymbol{k}} 
\sigma (x, \xi_1, \dots, \xi_N). 
\end{equation*}
Then the 
support of 
$\calF (\Delta_{\boldsymbol{k}} \sigma )$ 
is included in 
$B_{R_0} \times B_{R_1} \times \cdots \times B_{R_N}$
with 
$R_{j}= 2^{ k_{j} +1}$,
$j=0,1,\dots,N$,
so that, Proposition \ref{main-prop} 
yields
\begin{align*}
&
\|T_{ \Delta_{\boldsymbol{k}} \sigma }
\|_{ (L^2,\ell^{q_1}) \times \cdots \times (L^2,\ell^{q_N}) \to (L^2, \ell^r)}
\\
&\lesssim 
\bigg(
\left(2^{ k_{0} } \right)^{n/2}
\prod_{j=1}^{N}
\left(2^{ k_{j} } \right)^{ n/\min(2,q_j) }
\bigg) 
\big\| W (\xi_1, \dots, \xi_N)^{-1}
 \Delta_{\boldsymbol{k}} \sigma 
(x, \xi_1, \dots, \xi_N) 
\big\|_{ L^2_{ul} }. 
\end{align*} 
Taking sum over ${ \boldsymbol{k} \in (\N_0)^{N+1}} $, 
we obtain  
\begin{align*}
\|T_{ \sigma }
\|_{ (L^2,\ell^{p_1}) \times \cdots \times (L^2,\ell^{p_N}) \to (L^2, \ell^r)}^{\min(1,r)}
&\le 
\sum_{ \boldsymbol{k} \in (\N_0)^{N+1}} 
\|T_{ \Delta_{\boldsymbol{k}} \sigma }
\|_{ (L^2,\ell^{p_1}) \times \cdots \times (L^2,\ell^{p_N}) \to (L^2, \ell^r)}^{\min(1,r)}
\\
&
\lesssim 
\sum_{ \boldsymbol{k} \in (\N_0)^{N+1}} 
\bigg(
(2^{ k_{0} })^{n/2}
\prod_{j=1}^{N}
(2^{ k_{j} })^{ n/\min(2,q_j) }
\bigg) ^{\min(1,r)}
\\
&\quad\times
\big\| W (\xi_1, \dots, \xi_N)^{-1}
 \Delta_{\boldsymbol{k}} \sigma 
(x, \xi_1, \dots, \xi_N) 
\big\|_{ L^2_{ul} }^{\min(1,r)}
\\
&=
\|\sigma\|_{ S^{W}_{0,0} 
( \boldsymbol{s},\min(1,r); \R^n, N) } ^{\min(1,r)}
\end{align*}
with $s_0 = n/2$ and
$s_{j} = n/\min(2, q_{j})$, 
$j=1,\dots,N$,
which is the desired result. 
\end{proof}

\subsection{Symbols with classical derivatives} 
\label{subsectionClassical}

The following proposition shows  
that symbols that have classical derivatives 
up to certain order satisfy the conditions of 
Theorem \ref{main-thm-2}.

\begin{prop}
\label{classicalderivative} 
Let $\boldsymbol{s}=(s_0, s_1, \dots, s_N) \in [0, \infty)^{N+1}$. 
Let $\sigma = \sigma (x, \xi_1, \dots, \xi_N)$ 
be a bounded measurable 
function on $(\R^n)^{N+1}$ 
and $W \in \calM ( \R^{Nn} )$. 
Suppose 
\[
 |
\partial_{x}^{ \alpha_{0} }
\partial_{ \xi_{1} }^{ \alpha_{1} } \cdots
\partial_{ \xi_{N} }^{ \alpha_{N} }
\sigma (x, \xi_1, \dots, \xi_N)
|
\le 
W (\xi_1, \dots, \xi_N) 
\]
for 
$\alpha_{j}\in (\N_0)^n$ 
with 
$|\alpha_{j}| \le [s_{j}]+1$. 
Then 
$\sigma \in 
S^{W}_{0,0} ( \boldsymbol{s}, t ; \R^n, N)$ 
for every $t \in (0, \infty]$. 
To be precise, 
the above assumptions should be understood that 
the derivatives of $\sigma$ taken 
in the sense of distribution 
are locally integrable functions on $(\R^{n})^{N+1}$ 
and they are bounded by $W(\xi_1, \dots, \xi_N)$ 
almost everywhere.  
\end{prop}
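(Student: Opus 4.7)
The plan is to establish the pointwise bound
\begin{equation*}
|\Delta_{\boldsymbol{k}}\sigma(x,\xi_1,\dots,\xi_N)| \lesssim W(\xi_1,\dots,\xi_N) \prod_{j:\, k_j\ge 1} 2^{-([s_j]+1)k_j}
\end{equation*}
uniformly in $\boldsymbol{k}=(k_0,k_1,\dots,k_N)\in(\N_0)^{N+1}$. Since an $L^\infty$ bound dominates the $L^2_{ul}$ norm (a unit cube having finite measure), this transfers immediately to $\|W^{-1}\Delta_{\boldsymbol{k}}\sigma\|_{L^2_{ul}((\R^n)^{N+1})}$. The weighted sum defining $\|\sigma\|_{S^W_{0,0}(\boldsymbol{s},t;\R^n,N)}^t$ then factors as a product over $j=0,1,\dots,N$ of geometric series $1+\sum_{k\ge 1}2^{(s_j-[s_j]-1)kt}$, each converging because $s_j<[s_j]+1$. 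The case $t=\infty$ is handled analogously by taking suprema.

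To produce the pointwise bound, I exploit the vanishing of $\psi_{k_j}$ near the origin when $k_j\ge 1$. Setting $M_j=[s_j]+1$ and using the identity $|\eta|^{2M_j}=\sum_{|\alpha|=M_j}\binom{M_j}{\alpha}\eta^{2\alpha}$, one can write $\psi(\eta)=\sum_{|\alpha|=M_j}\eta^\alpha\psi_\alpha^{(j)}(\eta)$, where $\psi_\alpha^{(j)}(\eta):=\binom{M_j}{\alpha}\eta^\alpha\psi(\eta)/|\eta|^{2M_j}$ is a Schwartz function supported in the annulus $1/2\le|\eta|\le 2$. Rescaling gives, for each $k_j\ge 1$, the operator identity
\begin{equation*}
\psi_{k_j}(D_{\xi_j}) = 2^{-k_jM_j}\sum_{|\alpha|=M_j}\psi_{\alpha,k_j}^{(j)}(D_{\xi_j})\,(-i\partial_{\xi_j})^\alpha, \quad \psi_{\alpha,k_j}^{(j)}(\eta):=\psi_\alpha^{(j)}(\eta/2^{k_j}),
\end{equation*}
and an analogous identity in the $x$-variable when $k_0\ge 1$. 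Substituting these identities (and leaving $\psi_0(D)$ untouched for indices with $k_j=0$) expresses $\Delta_{\boldsymbol{k}}\sigma$ as a finite sum of terms of the form $\prod_{k_j\ge 1}2^{-M_jk_j}\cdot K_{\boldsymbol{\alpha},\boldsymbol{k}}\ast\partial^{\boldsymbol{\alpha}}\sigma$, where $K_{\boldsymbol{\alpha},\boldsymbol{k}}$ is a tensor product of Schwartz convolution kernels in the $x,\xi_1,\dots,\xi_N$ variables, and the multi-index $\boldsymbol{\alpha}=(\alpha_0,\alpha_1,\dots,\alpha_N)$ satisfies $|\alpha_j|=M_j$ when $k_j\ge 1$ and $|\alpha_j|=0$ when $k_j=0$.

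It remains to propagate the hypothesis $|\partial^{\boldsymbol{\alpha}}\sigma(x,\boldsymbol{\xi})|\le W(\boldsymbol{\xi})$ through these convolutions. The $x$-convolution preserves the bound trivially, since $W$ is independent of $x$ and the kernel in $x$ lies in $L^1$ with uniform norm. Each $\xi_j$-convolution has kernel $K_j(y)=2^{k_jn}K_{j,0}(2^{k_j}y)$ with $K_{j,0}\in\calS(\R^n)$, and is controlled via the moderate class bound \eqref{moderateboth}:
\begin{equation*}
|K_j\ast_{\xi_j}\partial^{\boldsymbol{\alpha}}\sigma(x,\boldsymbol{\xi})|\le\int|K_j(y)|\,W(\boldsymbol{\xi}-(0,\dots,y,\dots,0))\,dy \lesssim W(\boldsymbol{\xi})\int|K_{j,0}(z)|\langle z\rangle^M\,dz,
\end{equation*}
where the last step uses the change of variable $z=2^{k_j}y$ together with $\langle y\rangle^M=\langle z/2^{k_j}\rangle^M\le\langle z\rangle^M$ for $k_j\ge 0$. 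The final integral is finite and independent of $k_j$. Iterating over all $\xi_j$-directions yields the claimed pointwise bound. The main technical obstacle is precisely this tensor-product bookkeeping, ensuring that the moderate-class propagation applies uniformly in $\boldsymbol{k}$ across all variables simultaneously; the crucial observation is that dilating Schwartz kernels by $2^{k_j}\ge 1$ only improves their polynomial-weighted $L^1$ norms, which is what keeps all implicit constants uniform in $\boldsymbol{k}$.
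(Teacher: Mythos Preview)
Your proof is correct and follows the standard route: extract derivatives from the Littlewood--Paley pieces via the vanishing of $\psi$ near the origin, then propagate the hypothesis $|\partial^{\boldsymbol{\alpha}}\sigma|\le W$ through the resulting convolutions using the moderate-class inequality \eqref{moderateboth}, obtaining the decisive pointwise decay $|\Delta_{\boldsymbol{k}}\sigma|\lesssim W(\boldsymbol{\xi})\prod_{k_j\ge 1}2^{-([s_j]+1)k_j}$; the $\ell^t$-summability over $\boldsymbol{k}$ then reduces to a product of convergent geometric series since $s_j-[s_j]-1<0$. The paper itself omits the proof and refers to \cite[Proposition 4.7]{KMT-arXiv} (the bilinear case), remarking that the argument carries over with obvious changes; your write-up is precisely such an extension, and the bookkeeping you describe---uniform control of the dilated Schwartz kernels in weighted $L^1$ and the tensor-product iteration over the $\xi_j$-variables---matches what that reference does.
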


This proposition is proved in 
\cite[Proposition 4.7]{KMT-arXiv} 
for the case $N=2$ and $t=1$. 
The proof can be applied to the general case $N\ge 1$ 
and $t \in (0, \infty]$ with obvious modifications. 
Thus we omit the proof here.

\subsection{Proof of Theorem \ref{main-thm-1}}
\label{subsection-Proof-Main-1}

Here we give a proof of
Theorem \ref{main-thm-1}.

\begin{proof} 
We prove the assertion (2) first. 
Suppose $V \in \calB_N (\ZnN)$ 
and $\sigma \in 
S^{ \widetilde{V} }_{0,0} (\R^n, N)$. 
We take a function $V^{\ast}$ as mentioned in 
Proposition \ref{Vast}. 
Since $V^{\ast}$ is a moderate function, 
it follows that $\widetilde{V} \lesssim V^{\ast}$ 
and hence $\sigma \in S^{ V^{\ast} }_{0,0} (\R^n, N)$. 
Proposition \ref{classicalderivative} 
implies that 
$\sigma$ also satisfies  
the assumptions of Theorem \ref{main-thm-2} 
with $W = V^{\ast}$. 
Hence 
we obtain the boundedness of $T_{\sigma}$ for 
the case $\sum_{j}1/q_j =1/r$. 
Since the embedding 
$(L^2, \ell^r) \hookrightarrow (L^2, \ell^s) $ 
holds for $r \le s$, 
the boundedness also holds for 
the case $\sum_{j}1/q_j \ge 1/r$.

Next, we shall prove the assertion (1). 
The basic idea of this part
goes back to \cite[Proof of Lemma 6.3]{MT-2013}.

Let $V$ be a nonnegative bounded function on 
$\ZnN$ and $0<q_j, r \leq \infty$, $j=1,\dots,N$. 
We assume 
$\mathrm{Op} (S^{\widetilde{V}}_{0,0}(\R^n, N) ) 
\subset 
B \big( (L^2,\ell^{q_1} ) \times \dots \times (L^2,\ell^{q_N}) \to (L^2,\ell^{r} ) \big)$ 
with 
$\widetilde{V}$ defined as in 
Theorem \ref{main-thm-1}. 
By the closed graph theorem, 
it follows that 
there exist a positive integer $M$
and a positive constant $C$ such that
\begin{align}\label{inequality001}
\begin{split}
&
\|T_{\sigma}\|_{ (L^2,\ell^{q_1} ) \times \dots \times (L^2,\ell^{q_N}) \to (L^2,\ell^{r} ) } 
\\
&\le 
C\max_{|\alpha|, |\beta_1|,\dots |\beta_N| \le M}
\left\| 
\widetilde{V}(\xi_1,\dots,\xi_N)^{-1} 
\partial^{\alpha}_x 
\partial^{\beta_1}_{\xi_1} \cdots \partial^{\beta_N}_{\xi_N} 
\sigma(x,\xi_1,\dots,\xi_N)
\right\|_{L^{\infty}}
\end{split}
\end{align}
for all bounded smooth functions $\sigma$ on $(\R^n)^{N+1}$ 
(as for the argument using closed graph theorem, 
see \cite[Lemma 2.6]{BBMNT}). 
Our purpose is to prove the inequality 
\eqref{BL222}. 
To this end,
by a limiting argument,
it is sufficient to consider 
$A_{j} \in \ell^2 (\Z^n)$ such that 
$A_{j}(\mu)=0$ except for 
a finite number of $\mu \in \Z^n$,
$j=0,1,\dots,N$.

Take $\varphi,\widetilde{\varphi} \in \calS(\R^n)$ 
such that 
\begin{align}
&\mathrm{supp}\, \widetilde{\varphi} 
\subset [-1/2,1/2]^n, 
\quad
\widetilde{\varphi}=1 \ \text{on $[-1/4,1/4]^n$},  
\quad 
\mathrm{supp}\, \varphi \subset [-1/4,1/4]^n, 
\nonumber
\\
&
|\calF ^{-1} \varphi | \ge 1 \;\; 
\text{on $[-\pi,\pi ]^n$}. 
\label{varphi-live}
\end{align}
Set
\begin{equation*}
\sigma (x,\xi_1,\dots,\xi_N)
=\sigma (\xi_1,\dots,\xi_N)
=\sum_{ k_1,\dots,k_N \in \Z^n }
V(k_1, \dots, k_N) 
\prod_{j=1}^N \widetilde{\varphi}(\xi_{j}-k_{j}),
\end{equation*}
and define $f_{j} \in \calS (\R^n)$, $j=1,\dots,N$, by 
\begin{equation*}
f_{j} (x)
=
\sum_{\nu_{j} \in \Z^n}
A_{j}(\nu_{j})
e^{i \nu_{j} \cdot x} 
\calF^{-1}\varphi (x).
\end{equation*}

Then 
\begin{equation}\label{estimate-sigma}
| \partial^{\beta_1}_{\xi_1} \cdots \partial^{\beta_N}_{\xi_N} 
\sigma(\xi_1,\dots,\xi_N) |
\le C_{\beta_1,\dots, \beta_N}
\widetilde{V} (\xi_1, \dots, \xi_N).
\end{equation}
For $f_j$, we have 
\begin{equation}\label{fiamalgam}
\| f_{j} \|_{(L^2,\ell^{q_j})} \approx \| A_{j} \|_{\ell^2}
\end{equation}
for each $0< q_j \leq \infty$. 
In fact, 
notice that  
$\sum_{\nu_j } A_j (\nu_j) e^{i \nu_j \cdot x}$ 
is a $(2\pi \Z)^n$ periodic function and 
Parseval's identity gives 
\[
\left\|
\sum_{\nu_j } A_j (\nu_j) 
e^{i \nu_j \cdot (x+2 \pi \nu ) }
\right\|_{L^2 _{x} ([-\pi, \pi]^n)} 
\approx 
\|A_j\|_{\ell^2}
\]
for all $\nu \in \Z^n$.  
Thus using the property 
\eqref{varphi-live} and the fact that 
$\calF^{-1} \varphi $ is rapidly decreasing, 
we have 
\begin{align*}
\| f_{j} \|_{(L^2,\ell^q)} 
\approx 
\| f_{j} (x + 2 \pi \nu )
\|_{L^2_{x} ([-\pi, \pi]^n)\ell^q_{\nu} (\Z^n)} 
\approx 
\|A_j\|_{\ell^2}
\end{align*}
as desired (the proof of the former 
$\approx$ in the above inequalities is easy and is 
left to the reader).

Since
$\widehat{f_{j}}(\xi_{j} )
=\sum_{\nu_{j} \in \Z^n}
A_{j}(\nu_{j})
\varphi(\xi_{j} - \nu_{j})$,
by the conditions on 
$\varphi$ and 
$\widetilde{\varphi}$, 
we have 
\begin{align*}
T_{\sigma}(f_1,\dots,f_N)(x)
&=
\sum_{ \nu_1, \dots, \nu_N \in \Z^n }
V(\nu_1, \dots, \nu_N)
\left(\prod_{j=1}^N A_{j}(\nu_{j})
\right) 
e^{i (\nu_1 +\dots+ \nu_N)\cdot x} 
\calF^{-1} \varphi (x)^{N}
\\
&
=\sum_{k}
d_k e^{i k \cdot x} 
\calF^{-1} \varphi (x)^{N},
\end{align*}
where
\begin{equation}\label{def-dk}
d_k = 
\sum_{\nu_1 +\dots+ \nu_N =k} 
V(\nu_1, \dots, \nu_N)
\prod_{j=1}^N A_{j}(\nu_{j}). 
\end{equation}
Notice that 
$d_k\neq 0$ only for a finite number of $k$'s  
by virtue of our assumptions on $A_{j}$, $j=1,\dots,N$.  
Then, by the same reason as in \eqref{fiamalgam}, 
we have 
\begin{equation}
\label{Tsigmaamalgam}
\| T_{\sigma}(f_1,\dots,f_N) \|_{(L^2,\ell^r)}
\approx
\| d_k \|_{\ell^2_k}
\end{equation}
for each $0< r \leq \infty$.

Combining \eqref{inequality001}, 
\eqref{estimate-sigma},
\eqref{fiamalgam}, \eqref{def-dk}, 
and \eqref{Tsigmaamalgam}, 
we obtain 
\[
\bigg\|
\sum_{\nu_1 +\dots+ \nu_N =k} 
V(\nu_1, \dots, \nu_N)
\prod_{j=1}^N A_{j}(\nu_{j})
\bigg\|_{\ell^2_k}
\lesssim 
\prod_{j=1}^N 
\| A_{j}\|_{\ell^2},
\]
which is equivalent to \eqref{BL222}. 
This completes the proof of 
Theorem \ref{main-thm-1}. 
\end{proof}


\section{Boundedness in the $L^{p}$ framework}\label{sectionBddinLp}

In this section, we shall consider boundedness 
of multilinear pseudo-differential opeartors 
under sharp regularity conditions.

The boundedness 
of linear and multilinear pseudo-differential operators
for symbols with limited  smoothness
have been studied by several researchers.
For instance, 
results for linear pseudo-differential operators
were obtained by
Cordes \cite{Cordes}, 
Coifman--Meyer \cite{CM}, 
Muramatu \cite{Muramatu}, 
Miyachi \cite{Miyachi}, 
Sugimoto \cite{Sugimoto}, 
and Boulkhemair \cite{boulkhemair 1995}, 
and results for bilinear operators
were obtained by 
Herbert--Naibo \cite{herbert naibo 2014, herbert naibo 2016}.
For more than 3-fold 
multilinear pseudo-differential operators,
the present authors cannot find related results.
For the linear case,
the above mentioned authors proved that,
roughly speaking, 
smoothness of symbols up to 
$n/2$ for each variable $x$ and $\xi$ 
assures the boundedness in $L^2$. 
For the bilinear case, 
in \cite[Theorem 1.1]{herbert naibo 2016},
the authors proved that 
the bilinear pseudo-differential operators
with $x$-independent symbols 
of class $S^{\langle m \rangle }_{0,0}(\R^n,2)$ 
with $m < -n/2$  
are bounded from $L^2 \times L^2$ to $L^1$ 
if the smoothness 
up to $n$ for the $\xi_1$ and $\xi_2$ variables 
are assumed.
Moreover, in \cite[Theorem 2]{herbert naibo 2014},
the authors proved that
the bilinear operators of class 
$S^{\langle m \rangle }_{0,0}(\R^n,2)$ 
with $m < -n/2$
are bounded from $L^2 \times L^\infty$ 
(and $L^\infty \times L^2$) to $L^2$ 
if the smoothness up to $n/2$ for the 
$x$, $\xi_1$, and $\xi_2$ variables are assumed.
Notice that these results of 
\cite{herbert naibo 2014} 
and \cite{herbert naibo 2016} 
locate at the endpoints of 
the range $1 \leq r \leq 2 \leq q_1,q_2 \leq \infty $ with $1/q_1+1/q_2=1/r$
noted in \eqref{bilinearLpbdd}.

The purpose of this section is the following. 
Firstly, for linear and bilinear cases,
we generalize the $L^p$-boundedness 
to the $(L^2,\ell^p)$-boundedness
as in Theorem \ref{main-thm-1}.
Secondly, we relax 
the assumptions on $m$ and 
on the regularity of the symbols 
given in 
\cite[Theorem 2]{herbert naibo 2014} and 
\cite[Theorem 1.1]{herbert naibo 2016}. 
Thirdly, 
we generalize the results  
to more than 3-fold multilinear case.

The main theorem of this section is the following. 
This theorem is a generalization of 
\cite[Theorem 4.5]{KMT-arXiv}, 
where the case 
$N=2$, $q_j=2$, and $r \in [1,2]$
is given.

\begin{thm}\label{main-thm-3} 
Let 
$W \in \calM (\R^{Nn})$ 
and suppose 
the restriction of 
$W$ to $(\Z^n)^N$ belongs 
to the class $\calB_N ((\Z^n)^N)$. 
Let $q_j \in [2,\infty]$, $r \in [2/N,\infty]$, 
$s_0, s_j \in [0,\infty)$, $j=1, \dots, N$, 
and suppose 
$\sum_{j=1}^{N}1/q_j \ge 1/r$ and 
\begin{equation*}
s_0 = \frac{n}{2}, 
\quad
\frac{n}{2} - \frac{n}{q_j} \leq s_j \leq \frac{n}{2},
\quad 
\sum_{j=1}^N s_j
=
\sum_{j=1}^N \left( \frac{n}{2} - \frac{n}{q_j} \right) 
+
\frac{n}{r} .
\end{equation*}
Then 
the multilinear 
pseudo-differential operator 
$T_{\sigma}$ for 
$\sigma \in 
S^{W}_{0,0} (\boldsymbol{s},\min(1,r); \R^n, N)$ 
is bounded from 
$(L^2, \ell^{q_1} )\times \cdots \times (L^2, \ell^{q_N}) $ 
to 
$(L^2, \ell^r)$. 
If in addition $2/N\le r \leq 2$, 
then 
$T_\sigma$ is bounded from
$L^{q_1} \times \cdots \times L^{q_N} $ 
to 
$h^{r} $,  
where $L^{q_j}$ can be replaced by $bmo$ 
when $q_j=\infty$.
\end{thm}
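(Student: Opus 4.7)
My plan is to deduce Theorem \ref{main-thm-3} from Proposition \ref{main-prop} and Theorem \ref{main-thm-2} via a Littlewood--Paley decomposition of the symbol combined with complex interpolation, in the same spirit as the derivation of Theorem \ref{main-thm-2} from Proposition \ref{main-prop}. First I would write $\sigma = \sum_{\boldsymbol{k}\in \N_0^{N+1}} \Delta_{\boldsymbol{k}} \sigma$ and, using the subadditivity $\|\cdot\|_{(L^2,\ell^r)}^{\min(1,r)} \le \sum \|\cdot\|_{(L^2,\ell^r)}^{\min(1,r)}$ valid for $r\in(0,\infty]$, reduce the proof to the per-block estimate
\begin{equation*}
\|T_{\Delta_{\boldsymbol{k}}\sigma}\|_{(L^2,\ell^{q_1}) \times \cdots \times (L^2, \ell^{q_N}) \to (L^2, \ell^r)}
\lesssim
2^{nk_0/2} \prod_{j=1}^{N} 2^{s_j k_j}
\,\|W^{-1} \Delta_{\boldsymbol{k}}\sigma\|_{L^2_{ul}}.
\end{equation*}
Raising to the $\min(1,r)$-th power and summing over $\boldsymbol{k}$ would then reconstitute the Besov quasinorm $\|\sigma\|_{S^W_{0,0}(\boldsymbol{s},\min(1,r);\R^n,N)}$, the sums converging thanks to the identity $\sum_j s_j = \sum_j (n/2 - n/q_j) + n/r$.

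The heart of the argument is the block estimate with $s_j$ allowed to be strictly smaller than $n/2$. Proposition \ref{main-prop} directly yields it with $s_j = n/\min(2,q_j) = n/2$ for any auxiliary exponents $(p_j,\rho)$ satisfying $p_j\in[2,\infty]$ and $\sum 1/p_j = 1/\rho$; combined with the embeddings $(L^2,\ell^{q_j}) \hookrightarrow (L^2, \ell^{p_j})$ for $p_j\ge q_j$ and $(L^2,\ell^{\rho}) \hookrightarrow (L^2, \ell^r)$ for $\rho\le r$, this already gives the case where all $s_j = n/2$, which is just Theorem \ref{main-thm-2} combined with a trivial embedding. To reach the smaller admissible values of $s_j$, I would set up an analytic family of such bounds parametrized by a complex variable $z$ and apply Stein's complex interpolation theorem. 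The boundary families would be the corner case $\sum 1/p_j = 1/\rho$ (all $s_j = n/2$) and $N$ extremal endpoints in which $s_j$ attains its minimum $n/2 - n/q_j$ in a single coordinate $j$, these being obtained by freezing the $j$-th input and invoking the $N=1$ version of the theorem (itself handled by interpolating between the $L^2 \to L^2$ bound from Theorem \ref{main-thm-2} and an $L^2 \to L^2_{ul}$ endpoint argued via a Plancherel-type kernel bound in the $\xi$ variable).

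The second assertion for $L^{q_j}$ and $h^r$ spaces when $2/N \le r \le 2$ is then immediate from the amalgam-space result via the continuous embeddings $L^{q_j} \hookrightarrow (L^2,\ell^{q_j})$ for $q_j \in [2,\infty)$, $bmo \hookrightarrow (L^2,\ell^\infty) = L^2_{ul}$ when $q_j = \infty$, and $(L^2,\ell^r) \hookrightarrow h^r$ for $r \in (0, 2]$ recalled in Section \ref{secamalgam}.

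The main obstacle will be the interpolation step, and in particular the verification of the extremal endpoints, since the naive application of Proposition \ref{main-prop} always yields the factor $R_j^{n/2}$ when $q_j \ge 2$ and does not by itself access $s_j < n/2$. The conditions $\sum 1/q_j \geq 1/r$ and $\sum s_j = \sum(n/2 - n/q_j) + n/r$ are exactly what closes the interpolation, by matching each unit of smoothness relaxation in $\xi_j$ against a corresponding unit of integrability gain in the output coming from the strict inequality $\sum 1/q_j > 1/r$.
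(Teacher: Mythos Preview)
Your overall architecture is sound: decompose $\sigma=\sum_{\boldsymbol{k}}\Delta_{\boldsymbol{k}}\sigma$, prove a per-block bound with the correct powers $2^{s_j k_j}$, sum using $\min(1,r)$-subadditivity, and finish the $L^{q_j}\to h^r$ statement via the embeddings of Section~\ref{secamalgam}. This is exactly the skeleton the paper uses. The problem is the engine you propose for the per-block bound.

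The interpolation scheme you sketch does not stand on its own. The ``extremal endpoints'' are not established: you cannot ``freeze the $j$-th input and invoke the $N=1$ version'', because $T_{\Delta_{\boldsymbol{k}}\sigma}(f_1,\dots,f_N)$ is not a tensor product of linear operators, and fixing $f_j$ does not produce an $(N-1)$-linear operator whose symbol lies in any controlled class with the right $L^2_{ul}$ norm. Nor is the claimed ``$L^2\to L^2_{ul}$ endpoint via a Plancherel-type kernel bound'' something that has been proved in the paper or is straightforward; you would need precisely the inequality you are trying to obtain. Finally, complex interpolation of multilinear operators between amalgam spaces $(L^2,\ell^q)$ is delicate and not set up anywhere in the paper.

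The paper avoids interpolation entirely and instead sharpens Proposition~\ref{main-prop} directly. Going back into its proof, one arrives (see \eqref{atodual}--\eqref{tochuukeisanAj} and the line following) at the quantity
\[
\big\|\,\mathbf{1}_{B_{2R_j}}\ast S(|f_j|^2)\,\big\|_{L^{r_j/2}}^{1/2},
\qquad \frac{1}{r}=\sum_{j=1}^N\frac{1}{r_j}.
\]
At this point one applies Young's convolution inequality with exponents $2/r_j=2/p_j+2/q_j-1$, which is legitimate exactly when $p_j,q_j\in[2,\infty]$ and $1/p_j+1/q_j\ge 1/2$, and obtains
\[
\big\|\,\mathbf{1}_{B_{2R_j}}\ast S(|f_j|^2)\,\big\|_{L^{r_j/2}}^{1/2}
\lesssim R_j^{\,n/p_j}\,\|f_j\|_{(L^2,\ell^{q_j})}.
\]
This yields Proposition~\ref{main-prop2}: the conclusion of Proposition~\ref{main-prop} with $R_j^{n/2}$ replaced by $R_j^{n/p_j}$ for any $p_j\in[2,\infty]$ subject to $\sum_j(1/p_j+1/q_j-1/2)=1/r$. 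Setting $s_j=n/p_j$ then gives exactly the per-block estimate you wrote down, with the admissible range $n/2-n/q_j\le s_j\le n/2$ and the constraint $\sum_j s_j=\sum_j(n/2-n/q_j)+n/r$ arising from the Young exponent conditions. No interpolation is needed; the flexibility in $s_j$ comes from the freedom in splitting the convolution via Young's inequality.
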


We omit the detailed comparison 
of Theorem \ref{main-thm-3}
and the results 
mentioned at the beginning of this section. 
We only note that 
in the case $N=2$, 
$W (\nu_1, \nu_2) = (1+ |\nu_1|+|\nu_2|)^{-n/2}$, 
$s_0=s_1=s_2=n/2$, 
and 
$q_1=q_2=2$, $r=1$  
(resp.\ $q_1 =r=2$, $q_2 = \infty$), 
Theorem \ref{main-thm-3} implies
$\op(S_{0,0}^{\langle -n/2 \rangle }
(\boldsymbol{s},1; \R^n, 2))
\subset B(L^2 \times L^2 \to h^1)$ 
(resp.\ $\op(S_{0,0}^{\langle -n/2 \rangle}
(\boldsymbol{s},1; \R^n, 2))
\subset B(L^2 \times bmo \to L^2)$), 
which is an improvement 
of 
\cite[Theorem 1.1]{herbert naibo 2016} 
(resp.\ \cite[Theorem 2]{herbert naibo 2014}).

Observe that in the case 
$\sum_{j=1}^{N}1/q_j = 1/r$ of Theorem 
\ref{main-thm-3} the only admitted choice of 
$s_j$ is 
$s_j = n/2$ and this is the same 
as $s_j$ given in Theorem \ref{main-thm-2}. 
In the case $\sum_{j=1}^{N}1/q_j > 1/r$, however, 
Theorem \ref{main-thm-3} admits some smaller $s_j$. 
In the next section, we shall prove that the conditions 
on $s_0$ and $s_j$ given in Theorem \ref{main-thm-3} 
are sharp.

Now, we shall prove Theorem \ref{main-thm-3}. 
By the same reasoning as in Subsection \ref{subsectionMainTh}, 
Theorem \ref{main-thm-3} will be derived from the proposition given below. 

\begin{prop}
\label{main-prop2}
Let $W \in \calM (\R^{Nn})$ and suppose 
the restriction of $W$ to $\ZnN$ belongs 
to the class $\calB_N (\ZnN)$. 
Let 
$R_{0}, R_{j} \in [1, \infty)$,
$p_{j}, q_{j} \in [2, \infty]$, $j=1,\dots, N$, 
and $r \in (0,\infty]$ satisfy 
\begin{equation}\label{assumptionpqr}
\frac{1}{p_{j}}+\frac{1}{q_{j}}-\frac{1}{2} \geq 0 
\quad and \quad
\sum_{j=1}^N \left(\frac{1}{p_{j}}+\frac{1}{q_{j}}-\frac{1}{2}\right) = \frac{1}{r}.
\end{equation} 
Suppose $\sigma$ is 
a bounded continuous function on 
$(\R^n)^{N+1}$ such that  
$\supp \calF \sigma \subset 
B_{R_0} \times B_{R_1} \times \cdots \times B_{R_N}$. 
Then 
\begin{equation}\label{conclusionProp2}
\begin{split}
&
\left\| T_{ \sigma } \right\|_{(L^2,\ell^{q_1}) \times \dots \times (L^2,\ell^{q_N}) \to (L^2, \ell^{r})}
\\
&\lesssim
R_{0}^{n/2}
\bigg(\prod_{j=1}^N R_{j}^{ n/p_{j} }\bigg) 
\big\|
W(\xi_1, \dots, \xi_N)^{-1} 
\sigma (x,\xi_1,\dots,\xi_N) \big\|_{ L^{2}_{ul} ((\R^{n})^{N+1})}. 
\end{split}
\end{equation}
\end{prop}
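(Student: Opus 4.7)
The plan is to mimic the proof of Proposition~\ref{main-prop} verbatim up to the H\"older step in $\nu_0$, and then deviate in two places: the choice of H\"older exponents in $\mu$, and the final estimate of the factors involving $f_j$. All of the preliminary reductions---the duality and localization via $\theta(x-\mu)$, the frequency decomposition $\sigma=\sum_{\boldsymbol{\nu}}\sigma_{\boldsymbol{\nu}}\prod_j\kappa(\xi_j-\nu_j)$ from Lemma~\ref{unifdecom}, the Cauchy--Schwarz/Plancherel estimate on each $T_{\sigma_{\boldsymbol{\nu}}}$, the discretization using \eqref{cubicdiscretization}, the application of the $\calB_N$-property, and the H\"older inequality in $\nu_0$ with exponents $1=1/2+\sum_j 1/(2N)$---carry over without change and lead, as at \eqref{atodual}, to
\begin{equation*}
\|T_\sigma(f_1,\ldots,f_N)\|_{(L^2,\ell^r)}
\lesssim
R_0^{n/2}\,\|W^{-1}\sigma\|_{L^2_{ul}}
\Bigl\|
\prod_{j=1}^N\|\langle\nu_0-\mu\rangle^{-L/N}A_j(\nu_j,\nu_0)\|_{\ell^2_{\nu_j}\ell^{2N}_{\nu_0}}
\Bigr\|_{\ell^r_\mu},
\end{equation*}
with $A_j$ as in \eqref{AjA0}.

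The first modification is the choice of H\"older exponents in $\mu$. I set
\[
\frac{1}{\tilde q_j}=\frac{1}{p_j}+\frac{1}{q_j}-\frac{1}{2},
\]
so that by \eqref{assumptionpqr} we have $\tilde q_j\in[2,\infty]$ and $\sum_j 1/\tilde q_j=1/r$. H\"older in $\mu$ with these $\tilde q_j$ (in place of the $q_j$ used in Proposition~\ref{main-prop}) reduces the problem to the key estimate
\[
\|\langle\nu_0-\mu\rangle^{-L/N}A_j(\nu_j,\nu_0)\|_{\ell^2_{\nu_j}\ell^{2N}_{\nu_0}\ell^{\tilde q_j}_\mu}
\lesssim
R_j^{n/p_j}\|f_j\|_{(L^2,\ell^{q_j})}
\]
for each $j$. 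Squaring and repeating the sub-argument of Proposition~\ref{main-prop} (Lemma~\ref{estSRx} to sum in $\nu_j$, Lemma~\ref{propertiesofS}(1)(2), translation $\nu_0\mapsto\nu_0+\mu$, and absorption of the $\ell^N_{\nu_0}$-summable weight $\langle\cdot\rangle^{-2L/N}$ for $L$ sufficiently large), followed by Lemma~\ref{SellequivL}(1) and the fact that $S$ is bounded on $L^{\tilde q_j/2}$, further reduces matters to the convolution estimate
\[
\|\mathbf{1}_{B_{2R_j}}\ast|f_j|^2\|_{L^{\tilde q_j/2}}
\lesssim
R_j^{2n/p_j}\|f_j\|^2_{(L^2,\ell^{q_j})}.
\]

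The main obstacle is this last convolution bound, because a direct application of Young's inequality yields only $R_j^{2n/p_j}\|f_j\|_{L^{q_j}}^2$, and $\|f_j\|_{L^{q_j}}$ is in general strictly larger than the available amalgam norm $\|f_j\|_{(L^2,\ell^{q_j})}$. The remedy is to exploit the amalgam structure by decomposing $|f_j(y)|^2=\sum_{\mu\in\Z^n}\mathbf{1}_{\mu+Q}(y)|f_j(y)|^2$, which produces the pointwise bound
\[
(\mathbf{1}_{B_{2R_j}}\ast|f_j|^2)(x)
\lesssim
\sum_{\mu\in\Z^n}a_\mu^2\,\mathbf{1}_{B_{2R_j+\sqrt{n}}}(x-\mu),
\qquad
a_\mu:=\|f_j\|_{L^2(\mu+Q)}.
\]
The right-hand side is the ordinary convolution $\widetilde a\ast\mathbf{1}_{B_{2R_j+\sqrt{n}}}$ with $\widetilde a(y)=\sum_\mu a_\mu^2\,\mathbf{1}_{\mu+Q}(y)$; since $\|\widetilde a\|_{L^{q_j/2}}=\|a\|_{\ell^{q_j}}^2=\|f_j\|^2_{(L^2,\ell^{q_j})}$ and $\|\mathbf{1}_{B_{2R_j+\sqrt{n}}}\|_{L^{p_j/2}}\approx R_j^{2n/p_j}$, Young's inequality applied in precisely the scaling regime $2/q_j+2/p_j=1+2/\tilde q_j$---which is exactly the defining relation of $\tilde q_j$---closes the proof. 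Collecting the factors $R_0^{n/2}$, $\|W^{-1}\sigma\|_{L^2_{ul}}$, and $\prod_j R_j^{n/p_j}\|f_j\|_{(L^2,\ell^{q_j})}$ yields \eqref{conclusionProp2}.
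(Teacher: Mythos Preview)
Your proof is correct, but it takes a slight detour compared with the paper's argument. The key difference lies in how you handle the operator $S$ at the final step. You drop $S$ via its $L^{\tilde q_j/2}$-boundedness, reduce to $\|\mathbf{1}_{B_{2R_j}}\ast|f_j|^2\|_{L^{\tilde q_j/2}}$, and then face what you call the ``main obstacle'': a direct Young inequality would produce $\|f_j\|_{L^{q_j}}$ rather than the amalgam norm. Your cube-decomposition remedy repairs this correctly. By contrast, the paper simply \emph{keeps} $S$: commuting it past the convolution via Lemma~\ref{propertiesofS}(1) gives $\|\mathbf{1}_{B_{2R_j}}\ast S(|f_j|^2)\|_{L^{r_j/2}}$, to which Young's inequality applies directly, yielding $\|\mathbf{1}_{B_{2R_j}}\|_{L^{p_j/2}}\,\|S(|f_j|^2)\|_{L^{q_j/2}}$. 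The second factor is precisely $\|f_j\|_{(L^2,\ell^{q_j})}^2$ by Lemma~\ref{SellequivL}(2), with no extra work.

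In other words, the obstacle you identify is self-inflicted: the smoothing operator $S$ already encodes the local-$L^2$ averaging that your cube decomposition reconstructs by hand. Retaining $S$ through the Young step lets Lemma~\ref{SellequivL}(2) do that work for you. Both routes are valid and of comparable length; the paper's is cleaner because it reuses an existing lemma rather than introducing a parallel argument.
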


This proposition is a generalization of 
\cite[Proposition 4.1]{KMT-arXiv}, 
where the case $N=2$, $q_j=2$, and $r \in [1,2]$ 
is given. 
Notice that if we choose $p_j=2$, $j=1, \dots, N$, 
then the claim of the proposition coincides with 
that of Proposition \ref{main-prop}.

Here is the proof of the proposition. 

\begin{proof}[Proof of Proposition \ref{main-prop2}]
We repeat the same argument as in the proof of Proposition \ref{main-prop}. 
If $r_j \in (0, \infty]$ satisfy $1/r = \sum_{j=1}^{N} 1/ r_j$, 
then 
in the same way as we obtained \eqref{atodual} 
and \eqref{tochuukeisanAj}, we obtain 
\begin{align*}
\begin{split}
&\left\| T_{ \sigma }( f_1, \dots, f_N ) \right\|_{(L^2,\ell^r)}
\\
&\lesssim
R_{0}^{n/2} \|
W (\xi_1,\dots,\xi_N)^{-1} 
\sigma (x, \xi_1,\dots,\xi_N)
\|_{L^2_{ul}}
\, \prod_{j=1}^N 
\left\|\mathbf{1}_{B_{2R_j}} \ast S \left( | f_j |^2 \right) \right\|_{ L^{r_j/2} }^{1/2}.
\end{split}
\end{align*}
We take $1/r_j = 1/p_j +1/q_j -1/2$ with $p_j$ and $q_j$ given in 
the proposition. 
Then the assumptions of the proposition imply 
that $p_j/2, q_j/2 \in [1, \infty]$ and 
\[
0\le \frac{2}{r_{j}} = \frac{2}{p_{j}} + \frac{2}{q_{j}} - 1
\le 1.  
\]
Hence applying the Young inequality 
and using Lemma \ref{SellequivL} (2), 
we have 
\begin{equation*}
\left\|\mathbf{1}_{B_{2R_j}} \ast S \left( | f_j |^2 \right) \right\|_{ L^{r_j/2} }^{1/2}
\le
\left(\|\mathbf{1}_{B_{2R_j}}\|_{L^{p_j/2}}
\|S \left( | f_j |^2 \right)\|_{L^{q_j/2}}\right)^{1/2}
\lesssim
R_j^{n/p_j} \| f_j \|_{(L^2,\ell^{q_j})}. 
\end{equation*}
Combining the above inequalities, we obtain the desired result.
\end{proof}

\begin{proof}[Proof of Theorem \ref{main-thm-3}]
By the same argument as in Proof of 
Theorem \ref{main-thm-2}, 
the statement concerning the boundedness of 
$T_{\sigma}$ in $L^2$-based amalgam spaces follows 
from Proposition \ref{main-prop2} with $s_j = n/p_j$, $j=1,\dots,N$. 
The claim concerning the boundedness of 
$T_{\sigma}$ from 
$L^{q_1} \times \cdots \times L^{q_N}$ to 
$h^r$ follows from the embeddings of the spaces 
given in Subsection \ref{secamalgam}. 
\end{proof}


\section{Sharpness of Theorem \ref{main-thm-3}}\label{sectionSharpness}

In this section,
we shall discuss sharpness of some 
conditions for the boundedness of multilinear pseudo-differential 
operators in Lebesgue spaces.  
To do this, we consider 
the following special weight function 
\begin{equation*}
W_m(\xi_1,\dots,\xi_N)
=(1+|\xi_1|+\dots+|\xi_N|)^{m},
\quad m \in (-\infty,0],
\end{equation*}
and denote the class 
$S^{W_m}_{0,0} (\boldsymbol{s},t; \R^n, N)$ 
by 
$S^{\langle m \rangle }_{0,0}(\boldsymbol{s},t; \R^n, N)$.  
Recall that, as we have already defined in Introduction,  
$S^{W_m}_{0,0}(\R^n, N)
=S^{\langle m \rangle }_{0,0}(\R^n, N)$. 
Also recall that the restriction to $\ZnN$ of 
$W_m$ with $m= -(N-1)n/2$ is an example of a weight in 
$\calB_{N} (\ZnN)$ (see Example \ref{example-of-V}).  
We shall prove that 
the number  
$m = -(N-1)n/2$ is a critical one and 
also prove that the conditions set on 
$q_j$, $r$, and $s_j$ in Theorem \ref{main-thm-3} 
are sharp.

We use the following fact due to Wainger \cite[Theorem 10]{Wainger}.
Let $0<a<1$ and $0<b<n$.
For $t>0$, define
\begin{equation*}
\widetilde{f}_{a,b,t}(x)
=\sum_{k \in \Z^n \setminus \{0\}}
e^{-t|k|}|k|^{-b}e^{i|k|^a}e^{i k\cdot x}.
\end{equation*}
Then the limit
$\widetilde{f}_{a,b}(x)=\lim_{t \to +0}
\widetilde{f}_{a,b,t}(x)$
exsits for all $x \in \R^n \setminus (2\pi \Z)^n$.
If in addition $1 \le q \le \infty$ and 
$b>n-an/2-n/q+an/q$, then $\widetilde{f}_{a,b}$
is a function in $L^q(\T^n)$ whose Fourier coefficients 
are given by 
\begin{equation*}
\frac{1}{(2\pi)^n}
\int_{\T^n}
\widetilde{f}_{a,b}(x) e^{-i k\cdot x}\,dx
=
\begin{cases}
{|k|^{-b}e^{i|k|^a}} & {\quad\text{if}\quad k\neq 0}
\\ 
{0} & {\quad\text{if}\quad k=0}.
\end{cases}
\end{equation*}
From this, we see the following.

\begin{lem}[{\cite[Lemma 6.1]{MT-2013}}]\label{sharpness-lem}
Let $0<a<1$, $0<b<n$,
and $\varphi \in \calS(\R^n)$.
For $t>0$, set 
\begin{equation}\label{sharpness-Wainger}
f_{a,b,t}(x)
=
\sum_{k \in \Z^n \setminus \{0\}}
e^{-t|k|}|k|^{-b}e^{i|k|^a}e^{i k\cdot x}\varphi(x).
\end{equation}
If $1\le q \le \infty$ and $b>n-an/2-n/q+an/q$, then
$\sup_{t>0}\|f_{a,b,t}\|_{L^q(\R^n)}<\infty$.
\end{lem}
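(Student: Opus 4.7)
The key observation is that $f_{a,b,t}(x) = \widetilde f_{a,b,t}(x)\,\varphi(x)$, where
\[
\widetilde f_{a,b,t}(x) = \sum_{k\in\Z^n\setminus\{0\}} e^{-t|k|}|k|^{-b}e^{i|k|^a}e^{ik\cdot x}
\]
is a $(2\pi\Z)^n$-periodic function. The plan is therefore to exploit the rapid decay of the Schwartz cutoff $\varphi$ to reduce the $L^q(\R^n)$ estimate to an $L^q(\T^n)$ estimate for $\widetilde f_{a,b,t}$, and then to handle the $t$-dependence by realizing $\widetilde f_{a,b,t}$ as the action of a Poisson-type semigroup on the Wainger function $\widetilde f_{a,b}$ whose $L^q(\T^n)$-boundedness has been recorded just before the lemma.

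First I would partition $\R^n=\bigcup_{\nu\in\Z^n}\bigl(2\pi\nu+[-\pi,\pi)^n\bigr)$. Since $\widetilde f_{a,b,t}$ is $(2\pi\Z)^n$-periodic and $\varphi\in\calS(\R^n)$ satisfies $|\varphi(y+2\pi\nu)|\lesssim\langle\nu\rangle^{-M}$ uniformly for $y\in[-\pi,\pi]^n$ and any fixed $M>0$, one obtains
\[
\|f_{a,b,t}\|_{L^q(\R^n)}^q
=\sum_{\nu\in\Z^n}\int_{[-\pi,\pi]^n}|\widetilde f_{a,b,t}(y)|^q\,|\varphi(y+2\pi\nu)|^q\,dy
\lesssim \|\widetilde f_{a,b,t}\|_{L^q(\T^n)}^q
\]
once $M$ is chosen with $Mq>n$; the case $q=\infty$ is immediate from $\|\varphi\|_{L^\infty}<\infty$.

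It then remains to show that $\sup_{t>0}\|\widetilde f_{a,b,t}\|_{L^q(\T^n)}<\infty$. For this I would observe that $\widetilde f_{a,b,t}$ arises from $\widetilde f_{a,b}$ by applying the Fourier multiplier $k\mapsto e^{-t|k|}$ on $\T^n$, which coincides with convolution against the $(2\pi\Z)^n$-periodization of the Poisson kernel $P_t(x)=c_n t/(t^2+|x|^2)^{(n+1)/2}$ on $\R^n$. This periodized kernel is nonnegative and, under the normalization that makes its zeroth Fourier coefficient equal to $1$, is a probability density on $\T^n$; hence convolution with it is an $L^q(\T^n)$-contraction uniformly in $t>0$, giving $\|\widetilde f_{a,b,t}\|_{L^q(\T^n)}\le\|\widetilde f_{a,b}\|_{L^q(\T^n)}$. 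The right-hand side is finite by Wainger's theorem under the standing hypothesis $b>n-an/2-n/q+an/q$, which completes the proof. The only step requiring genuine care is confirming the Poisson-semigroup contraction on the torus with the correct normalization; once that is in place, the rest is bookkeeping.
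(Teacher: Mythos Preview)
Your argument is correct and matches the paper's intent: the paper does not spell out a proof but merely writes ``From this, we see the following'' after recording Wainger's result $\widetilde f_{a,b}\in L^q(\T^n)$ and cites \cite{MT-2013}. Your reduction of $\|f_{a,b,t}\|_{L^q(\R^n)}$ to $\|\widetilde f_{a,b,t}\|_{L^q(\T^n)}$ via the Schwartz cutoff, followed by the $L^q$-contraction of the periodized Poisson semigroup to pass to $\|\widetilde f_{a,b}\|_{L^q(\T^n)}$, is precisely the natural way to carry out that reduction.
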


Firstly we prove the following.

\begin{prop}\label{sharpnessAB}
Let 
$m \in (-\infty, 0]$, 
$q_j \in [1, \infty]$, $j=1, \dots, N$, 
and $r\in (0, \infty)$. 
Suppose all the multilinear operators $T_{\sigma}$ 
for $\sigma \in S^{\langle m \rangle }_{0,0} (\R^n, N)$ 
are bounded from 
$L^{q_1} \times \cdots \times L^{q_N}$ to $L^r$. 
Then 
$\sum_{j=1}^{N}1/q_j \ge 1/r$ and $m\le - (N-1)n/2$. 
\end{prop}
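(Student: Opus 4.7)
My plan is to treat the two conclusions separately. For the Hölder-type condition $\sum 1/q_j \geq 1/r$, I would use a dilation argument with a decoupled test symbol. Take a nonzero smooth bump $\eta$ compactly supported near the origin with $\eta(0) \neq 0$, and set $\sigma(\xi_1,\dots,\xi_N) = \prod_{j=1}^N \eta(\xi_j)$; since this is smooth and compactly supported in $\xi$, it lies in $S^{\langle m\rangle}_{0,0}(\R^n,N)$ for any $m \leq 0$, and the operator factorizes as $T_\sigma(f_1,\dots,f_N) = \prod_j (\eta(D)f_j)$. Testing with $f_j^\lambda(x) = f(x/\lambda)$ for a fixed Schwartz $f$ with $\eta(D)f \not\equiv 0$ and letting $\lambda \to \infty$: one has $\|f_j^\lambda\|_{L^{q_j}} \sim \lambda^{n/q_j}$, while the rapid decay of $\check\eta$ gives $(\eta(D)f_j^\lambda)(x) \to \eta(0) f(x/\lambda)$ uniformly on compact sets, so $\|T_\sigma(f_1^\lambda,\dots,f_N^\lambda)\|_{L^r} \sim \lambda^{n/r}$. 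The boundedness hypothesis then forces $\lambda^{n/r} \lesssim \lambda^{n\sum 1/q_j}$ for all large $\lambda$, i.e.\ $1/r \leq \sum 1/q_j$.

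For $m \leq -(N-1)n/2$, I would argue by contradiction, adapting the Wainger--phase-cancellation scheme behind \cite[Section 6]{MT-2013}. Assume $m > -(N-1)n/2$. The closed graph theorem upgrades the boundedness hypothesis to a uniform estimate
\[
\|T_\sigma\|_{L^{q_1} \times \cdots \times L^{q_N} \to L^r} \leq C \max_{|\alpha|,|\beta_j| \leq M} \|W_m(\xi)^{-1} \partial_x^{\alpha} \partial_\xi^{\beta} \sigma\|_{L^\infty},
\qquad W_m(\xi) = (1+|\xi_1|+\dots+|\xi_N|)^m.
\]
Choose Wainger parameters $a_j \in (0,1)$ close to $1$ and $b_j$ slightly above $n/2$, satisfying $b_j > n - a_j n/2 - n/q_j + a_j n/q_j$: this is possible for every $q_j \in [1,\infty]$ since the threshold tends to $n/2$ as $a_j \to 1$. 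Then by Lemma \ref{sharpness-lem} the test inputs $f_j^t = f_{a_j,b_j,t}$ satisfy $\sup_{t>0}\|f_j^t\|_{L^{q_j}} < \infty$. As symbol, take
\[
\sigma_\epsilon(\xi) = \widetilde\rho(\xi)\, W_m(\xi)\, \chi_\epsilon(\xi_1+\dots+\xi_N) \prod_{j=1}^N e^{-i|\xi_j|^{a_j}},
\]
where $\widetilde\rho$ is a smooth cutoff vanishing near $\xi_j = 0$ (so the $|\xi_j|^{a_j}$ factors are smooth) and $\chi_\epsilon(\zeta) = \sum_{K\in\Z^n}\epsilon_K\rho(\zeta-K)$ is a smooth interpolation of a random sign sequence $\epsilon = (\epsilon_K) \in \{\pm 1\}^{\Z^n}$; standard derivative bookkeeping together with $|\partial^\gamma W_m| \lesssim W_m$ and the boundedness of derivatives of $e^{-i|\xi_j|^{a_j}}$ (since $a_j \leq 1$) and of $\chi_\epsilon$ (uniformly in $\epsilon$) gives $\sigma_\epsilon \in S^{\langle m\rangle}_{0,0}$ with seminorms bounded independently of $\epsilon$. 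Expanding $\hat{f_j^t}$ and computing $T_{\sigma_\epsilon}(f_1^t,\dots,f_N^t)$, the Wainger phases $e^{i|k_j|^{a_j}}$ cancel the corresponding factors of $\sigma_\epsilon$ (to leading order, because $||k_j+\eta_j|^{a_j}-|k_j|^{a_j}| = O(|k_j|^{a_j-1}) \to 0$ on the support of $\hat\varphi(\cdot - k_j)$), yielding
\[
T_{\sigma_\epsilon}(f_1^t,\dots,f_N^t)(x) \approx \varphi(x)^N \sum_{K\in\Z^n}\epsilon_K A_K^t e^{iK\cdot x}, \quad A_K^t = \sum_{\substack{k_1+\dots+k_N=K \\ k_j\neq 0}} \prod_j e^{-t|k_j|}|k_j|^{-b_j} W_m(k).
\]
Khintchine's inequality then gives $\mathbb{E}_\epsilon \|T_{\sigma_\epsilon}(f_1^t,\dots,f_N^t)\|_{L^r}^r \sim (\sum_K (A_K^t)^2)^{r/2}$ for any $r \in (0,\infty)$. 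A dyadic count yields $A_K \sim R^{(N-1)n - \sum b_j + m}$ for $|K|\sim R$ (from $\sim R^{(N-1)n}$ representations $k_1+\dots+k_N=K$ with $|k_j|\sim R$), so $\sum_K A_K^2 \sim \sum_R R^{(2N-1)n - 2\sum b_j + 2m}$, which diverges exactly when $m > \sum b_j - (2N-1)n/2$; choosing $\sum b_j$ close enough to $Nn/2$, this captures every $m > -(N-1)n/2$. By monotone convergence, $\sum_K(A_K^t)^2 \to \infty$ as $t \to 0$, so some $\epsilon^{(t)}$ makes $\|T_{\sigma_{\epsilon^{(t)}}}(f_1^t,\dots,f_N^t)\|_{L^r}$ blow up while $\prod_j \|f_j^t\|_{L^{q_j}}$ remains bounded, contradicting the closed-graph estimate.

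The main obstacle will be twofold. First, justifying the phase cancellation rigorously: one must control the error from $\sigma_\epsilon(\xi) - \sigma_\epsilon(k)$ for $\xi_j$ on the support of $\hat\varphi(\cdot - k_j)$ and verify that the residual terms do not dominate the leading behaviour, which hinges on the subdiagonal growth rate $a_j < 1$. Second, the random-sign device $\chi_\epsilon$ must be incorporated without inflating the seminorms of $\sigma_\epsilon$ or interfering with the cancellation; this randomization is what extends the natural $L^2$-blowup (via Parseval, which alone only handles $r \geq 2$) to the full range $r \in (0,\infty)$ via Khintchine. Once these two technical points are secured, the rest is routine bookkeeping.
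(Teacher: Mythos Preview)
Your overall strategy for both parts matches the paper's: a dilation/translation argument for the H\"older condition (the paper simply cites the known version of this from \cite{GT} and \cite{grafakos 2014m}), and for the order condition the combination of closed graph theorem, Wainger test functions $f_{a_j,b_j,t}$, random signs, Khintchine, and the dyadic lower bound on the resulting coefficients $d_K$. The logic and the endgame computation are essentially the same.

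The one substantive difference is the symbol you build. You take a \emph{continuous} symbol
\[
\sigma_\epsilon(\xi)=\widetilde\rho(\xi)\,W_m(\xi)\,\chi_\epsilon(\xi_1+\dots+\xi_N)\prod_{j}e^{-i|\xi_j|^{a_j}},
\]
and then must argue that on $\mathrm{supp}\,\widehat\varphi(\cdot-k_j)$ the phases, the weight $W_m$, and the sign interpolant $\chi_\epsilon$ all agree with their values at the lattice point $k$ up to errors $O(|k_j|^{a_j-1})$, $O(|k|^{-1})$, etc., and that these errors survive the subsequent summation and Khintchine step. This is exactly the ``main obstacle'' you flag, and it is real work: the error terms have to be shown negligible not pointwise but after the $\ell^2_K$ accumulation, and for general $N$ the support of $\sum_j\xi_j$ on $\prod_j\mathrm{supp}\,\widehat\varphi(\cdot-k_j)$ has diameter $\approx N$, so $\chi_\epsilon(\sum\xi_j)$ may see several neighbouring $\epsilon_K$'s unless you shrink $\widehat\varphi$ accordingly.

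The paper sidesteps all of this by making the symbol \emph{piecewise constant on lattice cubes}:
\[
\sigma(\xi)=\sum_{k_1,\dots,k_N}c_{k_1+\dots+k_N}\,\langle(k_1,\dots,k_N)\rangle^{m}\prod_{j}e^{-i|k_j|^{a_j}}\,\widetilde\varphi(\xi_j-k_j),
\]
with $\widetilde\varphi\equiv 1$ on $\mathrm{supp}\,\varphi$. Then on the support of $\prod_j\varphi(\xi_j-k_j)$ the symbol equals the single number $c_{\sum k_j}\langle k\rangle^m\prod_j e^{-i|k_j|^{a_j}}$, the Wainger phases cancel \emph{exactly}, and the random signs $c_K$ sit directly on the output Fourier coefficients with no interpolation needed. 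The verification that $\sigma\in S^{\langle m\rangle}_{0,0}$ uniformly in $(c_K)$ is then a one-line estimate. So the key idea you are missing is not conceptual but structural: freeze the symbol on a lattice so that the approximation step disappears entirely. With that modification your proof becomes the paper's proof.
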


\begin{proof} 
We first prove the necessity of the condition 
$\sum_{j=1}^N 1/q_j \ge 1/r$. 
This is essentially due to 
\cite{GT} and \cite{grafakos 2014m}.

If the symbol $\sigma (x, \xi_1, \dots,\xi_N)$ is 
independent of $x$, then 
$\sigma$ is called a Fourier multiplier and 
$T_{\sigma}$ is called a multilinear Fourier multiplier operator. 
For multilinear Fourier multiplier operators, 
the following is known: 
if a nonzero Fourier multiplier operator $T_{\sigma}$
such that the support of the inverse Fourier transform of $\sigma$
is compact is bounded from $L^{q_1} \times \dots \times L^{q_N}$ to $L^r$, 
$0<q_j \le \infty$, and $0<r<\infty$,
then $\sum_{j=1}^N 1/q_j \ge 1/r$ 
(see \cite[Proposition 5]{GT} and 
\cite[Proposition 7.3.7]{grafakos 2014m}). 
Here, it should be remarked that
the case where some exponents $q_j$ are equal to infinity
was not discussed in those papers.
However, in our setting,
the definition of the boundedness of $T_{\sigma}(f_1,\dots,f_N)$
is restricted to functions $f_j \in \calS$.
Moreover, $\calS$ includes the set of smooth functions
with compact supports densely with respect to the $L^{\infty}$ norm.
Thus, the argument in the papers above works for such a case as well.

Now let $\sigma(\xi_1,\dots,\xi_N)$ be a 
nonzero function in $\calS((\R^n)^N)$
whose inverse Fourier transform has a compact support.
Then, since $\sigma(\xi_1,\dots,\xi_N)$ belongs to
$S^{\langle m \rangle }_{0,0}(\R^n, N)$ for 
any $m \le 0$,
the assumption of the proposition implies that 
$T_{\sigma}$ is bounded from $L^{q_1} \times \dots \times L^{q_N}$ 
to $L^r$.
Hence, by the fact mentioned above, 
we must have $\sum_{j=1}^N 1/q_j \ge 1/r$.

Next we prove the necessity of the condition 
$m\le -(N-1)n/2$. 
The argument below is based on the idea given in \cite[Proof of Lemma 6.3]{MT-2013}.
We first give a rather rough argument omitting necessary
limiting argument and we shall incorporate necessary details
at the last part of of proof.

By the closed graph theorem,
the assumption of the proposition implies that
there exists a positive integer $M$ such that
\begin{equation}\label{sharpness-m-assump}
\|T_\sigma\|_{L^{q_1} \times \dots \times L^{q_N} \to L^r}
\lesssim \max_{|\alpha| \le M}
\|\langle (\xi_1,\dots, \xi_N) \rangle^{-m}
\partial^{\alpha}
\sigma(x,\xi_1,\dots,\xi_N)\|_{L^{\infty}}
\end{equation}
for all 
$\sigma \in S^{\langle m \rangle }_{0,0}(\R^n, N)$,
where
$\partial^{\alpha}=\partial^{\alpha_0}_x\partial^{\alpha_1}_{\xi_1}
\dots \partial^{\alpha_N}_{\xi_N}$.

Let $\varphi, \widetilde{\varphi} \in \calS(\R^n)$
be such that
\begin{alignat*}{2}
&\mathrm{supp}\, \varphi \subset [-1/4,1/4]^n,
&\qquad
&|\calF^{-1}\varphi| \ge 1 \ \text{on} \ [-\pi,\pi]^n,
\\
&\mathrm{supp}\, \widetilde{\varphi} \subset [-1/2,1/2]^n,
&\qquad
&\widetilde{\varphi} = 1 \ \text{on} \ [-1/4,1/4]^n.
\end{alignat*}
We set
\begin{align*}
\sigma(x,\xi_1,\dots,\xi_N)
&=\sigma(\xi_1,\dots,\xi_N)
\\
&=\sum_{k_1,\dots,k_N\in \Z^n}
c_{k_1+\dots+k_N}
\langle (k_1,\dots,k_N) \rangle^{m}
\prod_{j=1}^N e^{-i|k_j|^{a_j}}
\widetilde{\varphi}(\xi_j-k_j),
\\
f_{a_j,b_j}(x)
&=\sum_{\ell_j \in \Z^n \setminus \{0\}}
|\ell_j|^{-b_j}e^{i|\ell_j|^{a_j}}e^{i \ell_j \cdot x}
\calF^{-1}\varphi(x),
\quad j=1,\dots,N,
\end{align*}
where $\{c_k\}_{k \in \Z^n}$
is a sequence satisfying $\sup_{k \in \Z^n}|c_k| \le 1$
that will be chosen later,
$0<a_j<1$,  
and $b_j=n-a_j n/2-n/q_j+a_j n/q_j+\epsilon_j$ 
with $\epsilon_j >0$. 
Here, 
we choose $\epsilon_j>0$ sufficiently small
so that $0<b_j<n$; this is possible since 
$b_j=n/2+(1-a_j)(n/2-n/q_j)+\epsilon_j$
and $|(1-a_j)(n/2-n/q_j)|<n/2$. 
Then
\begin{equation}\label{sharpness-m-pf-1}
|\partial^{\alpha_1}_{\xi_1}\dots\partial^{\alpha_N}_{\xi_N}
\sigma(\xi_1,\dots,\xi_N)|
\le C_{\alpha_1,\dots,\alpha_N} \langle (\xi_1,\dots,\xi_N) \rangle^m,
\end{equation}
where the constant is independent of
$\{c_k\}_{k \in \Z^n}$ 
satisfying $\sup_{k \in \Z^n}|c_k| \le 1$,
and by Lemma \ref{sharpness-lem} and Fatou's lemma,
\begin{equation}\label{sharpness-m-pf-2}
\prod_{j=1}^N \|f_{a_j,b_j}\|_{L^{q_j}} <\infty.
\end{equation}

Since
\begin{equation}\label{sharpness-m-s_0-pf}
\widehat{f_{a_j,b_j}}(\xi_j)=\sum_{\ell_j \in \Z^n \setminus \{0\}}
|\ell_j|^{-b_j}e^{i|\ell_j|^{a_j}}\varphi(\xi_j-\ell_j),
\end{equation}
it follows from the conditions on $\varphi, \widetilde{\varphi}$
that $T_{\sigma}(f_{a_1,b_1},\dots,f_{a_N,b_N})(x)$ can be written as
\begin{align*}
&\sum_{k_1,\dots,k_N\in \Z^n \setminus \{0\}}
c_{k_1+\dots+k_N}
\langle (k_1,\dots,k_N) \rangle^{m}
\left(\prod_{j=1}^N|k_j|^{-b_j}\right)
e^{i(k_1+\dots+k_N)\cdot x}
\calF^{-1}\varphi(x)^N
\\
&=\sum_{k \in \Z^n}c_k d_k e^{ik\cdot x} \calF^{-1}\varphi(x)^N
\end{align*}
with
\[
d_k=\sum_{\substack{k_1+\dots+k_N=k \\ k_1, \dots, k_N \neq 0}}
\langle (k_1,\dots,k_N) \rangle^{m}
\prod_{j=1}^N|k_j|^{-b_j}.
\]
Thus,
by the condition $|\calF^{-1}\varphi| \ge 1$
on $[-\pi,\pi]^n$,
\eqref{sharpness-m-pf-1},
and \eqref{sharpness-m-pf-2},
our assumption \eqref{sharpness-m-assump} implies
\begin{equation}\label{sharpness-m-pf-3}
\int_{[-\pi,\pi]^n}
\bigg|
\sum_{k \in \Z^n}c_k d_k e^{ik\cdot x}
\bigg|^r
dx
\lesssim 1.
\end{equation}
Here, it should be noticed that
the implicit constant 
in \eqref{sharpness-m-pf-3} 
depends on the quantity of 
\eqref{sharpness-m-pf-2}
but can be taken independent of  
$\{c_k\}_{k \in \Z^n}$
so far as $\sup_{k \in \Z^n}|c_k| \le 1$.

We choose 
$c_k = c_k (\omega)$  
to be identically distributed 
independent random variables on a probability 
space, each of which  
takes $+1$ and $-1$ with 
probability $1/2$. 
Then integrating 
over $\omega$ and using 
Khintchine's inequality, 
we have 
\begin{equation*}
\int 
\Big(\text{the left hand side of 
\eqref{sharpness-m-pf-3}} 
\Big)\, dP (\omega)
\approx 
\bigg(\sum_{k}
|d_k|^2 \bigg)^{r/2} 
\end{equation*}
(for Khintchine's inequality, 
see, e.g., \cite[Appendix C]{grafakos 2014c}). 
Hence,
\begin{equation}\label{sharpness-m-pf-4}
\bigg(\sum_{k \in \Z^n}|d_k|^2 \bigg)^{1/2}
\lesssim 1.
\end{equation}
Let $k \in \Z^n$ be such that $|k|$ is sufficiently large.
Each $d_k$ is equal to
\begin{equation*}
\sum_{\substack{k_2+\dots+k_N \neq k, \\ k_2, \dots, k_N \neq 0}}
\langle (k-k_2-\dots-k_N,k_2,\dots,k_N) \rangle^{m}
|k-k_2-\dots-k_N|^{-b_1} |k_2|^{-b_2} \dots |k_N|^{-b_N}
\end{equation*}
and this can be estimated from below by
\begin{align*}
&\sum_{0<|k_2|,\dots,|k_N| \le |k|/N}
\langle (k-k_2-\dots-k_N,k_2,\dots,k_N) \rangle^{m}
\\
&\qquad \qquad \times
|k-k_2-\dots-k_N|^{-b_1} |k_2|^{-b_2} \dots |k_N|^{-b_N}
\\
&\approx
|k|^{m-\sum_{j=1}^Nb_j+(N-1)n},
\end{align*}
where we used the fact $0<b_j<n$.
Then \eqref{sharpness-m-pf-4} yields
\begin{equation*}
m-\sum_{j=1}^Nb_j+(N-1)n
<-n/2.
\end{equation*}
Therefore,
by the arbitrariness of $0<a_j<1$ and $\epsilon_j>0$,
taking the limit as $a_j \to 1$ and $\epsilon_j \to 0$,
we must have $m-Nn/2+(N-1)n \le -n/2$,
namely $m \le -(N-1)n/2$.

The above argument is not entirely rigorous since,  
when we use 
Khintchine's inequality, we do not know a priori 
that $\sum |d_k|^{2}<\infty$ and since 
the functions 
$f_{a_j,b_j}$, $j=1,\dots,N$, are not in $\calS$. 
To get around these points, we replace $f_{a_j,b_j}$
by 
$f_{a_j,b_j,t}$ 
defined by \eqref{sharpness-Wainger}
with $\varphi$ replaced by $\calF^{-1}\varphi$. 
Then $f_{a_j,b_j,t}$ is a function in $\calS$ and 
Lemma \ref{sharpness-lem} gives
\begin{equation*}
\sup_{t>0}
\left(\prod_{j=1}^N\|f_{a_j,b_j,t}\|_{L^{q_j}}\right)<\infty.
\end{equation*}
If we define $\sigma$ in the same way as above, 
then we have 
\begin{equation*}
T_{\sigma}(f_{a_1,b_1,t}, \dots, f_{a_N,b_N,t})(x)
=\sum_{k \in \Z^n}c_k d_{k,t}
e^{i k \cdot x}\calF^{-1}{\varphi}(x)^N
\end{equation*}
with
\[
d_{k,t}=
\sum_{\substack{k_1+\dots+k_N=k \\ k_1, \dots, k_N \neq 0}}
\langle (k_1,\dots,k_N) \rangle^{m}
\prod_{j=1}^Ne^{-t|k_j|}|k_j|^{-b_j}, 
\]
which certainly satisfies 
$\sum_{k}|d_{k,t}|^2 <\infty$. 
Hence, by the argument as given above, 
we see that the estimate \eqref{sharpness-m-pf-4}
with $d_{k}$ replaced by $d_{k,t}$ holds 
with an implicit constant independent of $t>0$. 
Therefore, taking the limit as $t\to 0$, we obtain 
$\sum_{k}|d_{k}|^{2}<\infty$. 
The rest of the argument is the same as above.
\end{proof}

The next proposition shows that the condition 
on $s_0$ in Theorems \ref{main-thm-3} is sharp.

\begin{prop}\label{sharpnessC}
Let 
$m=-(N-1)n/2$, 
$q_1, \dots, q_N \in [1, \infty]$, 
$r\in (0, \infty]$, and 
$s_0, s_1, \dots, s_N \in [0, \infty)$. 
Suppose there exists a  
$t \in (0, \infty]$ such that 
the estimate 
\begin{equation}\label{sharpness-assump-s_0-t}
\begin{split}
&
\|T_{\sigma} 
\|_{ L^{q_1} \times \cdots \times L^{q_N} \to L^r}
\\
&\lesssim 
\left\|
2^{\boldsymbol{k}\cdot\boldsymbol{s}}
\big\|
\langle (\xi_1, \dots, \xi_N) \rangle^{-m} 
\Delta_{\boldsymbol{k}}\sigma(x,\xi_1,\dots,\xi_N)\big\|_{L^{\infty} ((\R^n)^{N+1}) }
\right\|_{\ell^t_{k_0, k_1, \dots, k_N} ((\Z^n)^{N+1}) }
\end{split}
\end{equation}
holds for all smooth functions $\sigma$ 
with the right hand side finite.
Then $s_0 \ge n/2$. 
\end{prop}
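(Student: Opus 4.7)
The plan is to argue by contradiction. Assuming \eqref{sharpness-assump-s_0-t} holds together with $s_0 < n/2$, I would construct a family of smooth test symbols $\sigma_R$ parametrized by a large parameter $R \ge 1$ for which the estimate fails as $R \to \infty$. Echoing the strategy used in the proof of Proposition \ref{sharpnessAB}, the construction is driven by Wainger/Khintchine-type randomness, but now placed in the $x$-variable so as to force $R^{n/2} \lesssim R^{s_0}$ up to logarithmic factors, yielding $s_0 \ge n/2$.

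A natural choice is to take
\[
\sigma_R(x, \xi_1, \dots, \xi_N) = \sum_{\eta \in \Z^n,\ |\eta| \le R} c_\eta\, W_m(\eta)\, e^{i\eta \cdot x}\, \chi(\xi_1 - \eta)\,\chi(\xi_2)\cdots\chi(\xi_N),
\]
where $c_\eta \in \{\pm 1\}$ are random signs and $\chi \in \calS(\R^n)$ is a bump whose Fourier support is so small that the translates $\chi(\cdot - \eta)$ have essentially disjoint supports as $\eta$ varies in $\Z^n$. This disjointness forces only a single summand to contribute at each $(x,\xi)$, so every weighted Littlewood--Paley piece $\|W_m^{-1}\Delta_{\boldsymbol{k}}\sigma_R\|_{L^\infty}$ is of order $1$; since the $\xi$-Littlewood--Paley pieces of the Schwartz bump $\chi$ decay rapidly and only $x$-frequencies with $2^{k_0}\lesssim R$ are present, the right-hand side of \eqref{sharpness-assump-s_0-t} is dominated by a constant times $R^{s_0}$. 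Testing $T_{\sigma_R}$ against matched wave packets $f_1(x) = \bigl(\sum_{|\eta|\le R} d_\eta e^{i\eta\cdot x}\bigr)\chi_0(x)$ and $f_j=\chi_0$ for $j\ge 2$, the small Fourier support of $\chi$ selects the diagonal $\eta_0=\eta$ in the factor $\chi(D-\eta)f_1 = d_\eta e^{i\eta \cdot x}[\chi(D)\chi_0](x)$, so that
\[
T_{\sigma_R}(f_1, \dots, f_N)(x) = G_0(x)\cdot\sum_{|\eta|\le R} c_\eta d_\eta\, W_m(\eta)\, e^{2i\eta\cdot x}
\]
for a fixed Schwartz bump $G_0$. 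Applying Khintchine's inequality pointwise in $x$ to the random sum, together with the Khintchine--Steinhaus inequality to estimate $\|f_1\|_{L^{q_1}}$, a judicious choice of the signs $c_\eta, d_\eta$ is expected to yield the lower bound $\|T_{\sigma_R}\|_{L^{q_1} \times \cdots \to L^r} \gtrsim R^{n/2}$, which combined with $\|\sigma_R\|_{S^{\langle m\rangle}_{0,0}(\boldsymbol{s},t)} \lesssim R^{s_0}$ forces $s_0 \ge n/2$.

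The main obstacle I anticipate is that the straightforward calculation above does not automatically saturate at the $R^{n/2}$ scale needed to force $s_0 \ge n/2$: the Khintchine lower bound produces a factor $\bigl(\sum_{|\eta|\le R} W_m(\eta)^2\bigr)^{1/2}$ which, combined with the $R^{n/2}$ growth of $\|f_1\|_{L^{q_1}}$, gives a ratio that depends delicately on $N$, since $\sum_{|\eta|\le R}|\eta|^{-(N-1)n}$ has qualitatively different asymptotics for $N=1$, $N=2$, and $N\ge 3$. Closing this gap will require a more refined choice of test symbol and test functions, incorporating additional modulations in the $f_j$ for $j \ge 2$ and/or replacing the single lattice $\chi(\xi_1 - \eta)$ by a product over multiple lattices $\prod_j \chi(\xi_j - \eta_j)$; alternatively, one may dualize via the $j$-th transpose operator $T_\sigma^{\ast, j}$ (which exchanges the roles of $x$ and $\xi_j$) and thereby reduce the sharpness of $s_0$ to that of some $s_j$, which is accessible by a direct Wainger-type construction paralleling Proposition \ref{sharpnessAB}.
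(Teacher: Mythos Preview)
Your construction has a genuine gap that you have partly recognized but not resolved. With $\sigma_R$ and $f_1,\dots,f_N$ as you describe, the right-hand side of \eqref{sharpness-assump-s_0-t} is indeed $\lesssim R^{s_0}$, but the ratio $\|T_{\sigma_R}(f_1,\dots,f_N)\|_{L^r}/\prod_j\|f_j\|_{L^{q_j}}$ is only of order $(\sum_{|\eta|\le R}W_m(\eta)^2)^{1/2}/R^{n/2}$ after Khintchine. For $N=1$ this ratio is $\approx 1$ and yields merely $s_0\ge 0$; for $N\ge 2$ it tends to $0$ and yields nothing. The defect is structural: your symbol introduces $x$-oscillations $e^{i\eta\cdot x}$ that are \emph{tied to} the $\xi_1$-frequency $\eta$, and the matching test function $f_1$ carries the same oscillations, so input and output live at the same frequency scale and no gain appears. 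Your proposed remedies (multi-lattice products, dualization through the $j$-th transpose) are too vague; in particular, the transpose of a pseudo-differential operator in $S_{0,0}$ does not have a tractable symbol, so swapping $s_0$ with some $s_j$ by duality is not straightforward.

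The paper's argument avoids this difficulty by a different mechanism. Instead of lattice-modulated oscillations, one puts the factor $\varphi(x)e^{-ix\cdot(\xi_1+\dots+\xi_N)}$ into the symbol (together with an extra damping $\langle(\ell_1,\dots,\ell_N)\rangle^{m-s_0}$ in the $\xi$-profile to keep the Besov norm finite). The phase $e^{-ix\cdot(\xi_1+\dots+\xi_N)}$ cancels the oscillatory kernel $e^{ix\cdot(\xi_1+\dots+\xi_N)}$ in the definition of $T_\sigma$, so the output becomes a \emph{constant} multiple of $\varphi(x)$, namely
\[
T_\sigma(f_{a_1,b_1},\dots,f_{a_N,b_N})(x)=C\,\varphi(x)\sum_{\ell_1,\dots,\ell_N\neq 0}\langle(\ell_1,\dots,\ell_N)\rangle^{m-s_0}\prod_{j=1}^N|\ell_j|^{-b_j},
\]
when one takes the $f_j$ to be Wainger-type functions $f_{a_j,b_j}$ (Lemma \ref{sharpness-lem}) with Wainger phases that are cancelled by matching phases placed in $\sigma$. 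The right-hand side of \eqref{sharpness-assump-s_0-t} is finite (one checks $|\Delta_{\boldsymbol{k}}\sigma|\lesssim\langle(\xi_1,\dots,\xi_N)\rangle^{m}2^{-k_0s_0-k_1L_1-\dots-k_NL_N}$ by interpolating between estimates with $0$ and $L_0$ derivatives in $x$), while the constant above must be finite by the assumed boundedness. Convergence of that series forces $m-s_0-b_1+\sum_{j\ge 2}(n-b_j)<-n$, and letting $b_j\to n/2$ gives $s_0\ge n/2$. The key idea you are missing is this phase cancellation: it decouples the size of the output from the oscillation cost in $f_1$, which is exactly what your construction fails to do.
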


\begin{proof}
Firstly, we observe that 
it is sufficient to deduce the condition $s_0 \ge n/2$ under the 
assumption that \eqref{sharpness-assump-s_0-t} holds 
with $t=\infty$. 
In fact, once this is proved, 
then replacing  
$s_j$ by $s_j+\epsilon$, $\epsilon>0$,
$j=0,1,\dots,N$, 
we see that \eqref{sharpness-assump-s_0-t} with $t\in (0, \infty)$ 
implies 
$s_0+\epsilon \ge n/2$. 
Thus since $\epsilon >0$ is arbitrary, 
we must have $s_0 \ge n/2$.

Now, since our method below is similar to 
the one used in the proof of Proposition \ref{sharpnessAB}, 
we shall only give an argument omitting necessary
limiting argument. 
Suppose 
\eqref{sharpness-assump-s_0-t} holds with $t=\infty$. 
Take a function 
$\varphi \in \calS(\R^n)$ 
such that 
$\mathrm{supp}\, \varphi \subset [-1/2,1/2]^n$
and $\int_{\R^n} \varphi (\xi)^2\, d\xi \neq 0$ 
and set
\begin{align*}
&\sigma(x,\xi_1,\dots,\xi_N)
=\varphi(x)
e^{-ix \cdot (\xi_1+\dots+\xi_N)}
\\
&\qquad \qquad \qquad \qquad \times
\sum_{\ell_1,\dots,\ell_N \in \Z^n}
\langle (\ell_1,\dots,\ell_N) \rangle^{m-s_0}
\prod_{j=1}^N e^{-i|\ell_j|^{a_j}}
\varphi(\xi_j-\ell_j),
\\
&f_{a_j,b_j}(x)=\sum_{\ell_j \in \Z^n \setminus \{0\}}
|\ell_j|^{-b_j}e^{i|\ell_j|^{a_j}}e^{i \ell_j \cdot x}
\calF^{-1}{\varphi}(x),
\quad j=1,\dots,N,
\end{align*}
where $0<a_j<1$  
and $b_j=n-a_j n/2-n/q_j+a_j n/q_j+\epsilon_j$ 
with $\epsilon_j>0$. 
Here, we choose $\epsilon_j>0$ sufficiently small
so that $0<b_j<n$.
Let $L_j$ be a nonnegative integer
satisfying $L_j \ge s_j$ for $j=0,1,\dots,N$.
Since
\[
|\partial^{\alpha_0}_x
\partial^{\alpha_1}_{\xi_1} \dots \partial^{\alpha_N}_{\xi_N}
\sigma(x,\xi_1,\dots,\xi_N)|
\le
C_{\alpha_0,\alpha_1,\dots,\alpha_N}
\langle (\xi_1,\dots,\xi_N) \rangle^{m-s_0+|\alpha_0|},
\]
we see that
\[
|\Delta_{\boldsymbol{k}}\sigma(x,\xi_1,\dots,\xi_N)|
\lesssim
\begin{cases}
\langle (\xi_1,\dots,\xi_N) \rangle^{m-s_0}
2^{-k_1 L_1-\dots-k_N L_N},
\\
\langle (\xi_1,\dots,\xi_N) \rangle^{m-s_0+L_0}
2^{-k_0 L_0-k_1 L_1-\dots-k_N L_N}
\end{cases}
\]
(see \cite[Subsection 5.3]{KMT-arXiv}).
Thus,
taking $0 \le \theta_0 \le 1$ satisfying $s_0=L_0\theta_0$,
we have
\begin{align*}
|\Delta_{\boldsymbol{k}}\sigma(x,\xi_1,\dots,\xi_N)|
&=|\Delta_{\boldsymbol{k}}\sigma(x,\xi_1,\dots,\xi_N)|^{1-\theta_0}
|\Delta_{\boldsymbol{k}}\sigma(x,\xi_1,\dots,\xi_N)|^{\theta_0}
\\
&\lesssim
\left(\langle (\xi_1,\dots,\xi_N) \rangle^{m-s_0}
2^{-k_1L_1-\dots-k_NL_N}\right)^{1-\theta_0}
\\
&\qquad \times
\left(\langle (\xi_1,\dots,\xi_N) \rangle^{m-s_0+L_0}
2^{-k_0L_0-k_1L_1-\dots-k_NL_N}\right)^{\theta_0}
\\
&=\langle (\xi_1,\dots,\xi_N) \rangle^{m}
2^{-k_0s_0-k_1L_1-\dots-k_NL_N},
\end{align*}
which gives
\begin{equation}\label{sharpness-proof-s_0-1}
\sup_{\boldsymbol{k} \in (\N_0)^{N+1}}
2^{\boldsymbol{k}\cdot\boldsymbol{s}}
\|\langle (\xi_1,\dots,\xi_N) \rangle^{-m}
\Delta_{\boldsymbol{k}}\sigma(x,\xi_1,\dots,\xi_N)\|_{L^{\infty}}
<\infty.
\end{equation}
On the other hand,
by Lemma \ref{sharpness-lem},
\begin{equation}\label{sharpness-proof-s_0-2}
\prod_{j=1}^N
\|f_{a_j,b_j}\|_{L^{q_j}}
<\infty.
\end{equation}

Using the support condition of $\varphi$ and
\eqref{sharpness-m-s_0-pf},
we see that
\begin{align*}
T_{\sigma}(f_{a_1,b_1},\dots,f_{a_N,b_N})(x)
&=(2\pi )^{-Nn} 
\varphi(x)
\sum_{\ell_1,\dots,\ell_N\in \Z^n \setminus \{0\}}
\langle (\ell_1,\dots,\ell_N) \rangle^{m-s_0}
\\
&\qquad \times
\prod_{j=1}^N
|\ell_j|^{-b_j}
\int_{\R^n}\varphi(\xi_j-\ell_j)^2\, d\xi_j
\\
&=C\left(\sum_{\ell_1,\dots,\ell_N\in \Z^n \setminus \{0\}}
\langle (\ell_1,\dots,\ell_N) \rangle^{m-s_0}
\prod_{j=1}^N
|\ell_j|^{-b_j}\right)
\varphi(x).
\end{align*}
Hence the assumption \eqref{sharpness-assump-s_0-t} with 
$t=\infty$ together with \eqref{sharpness-proof-s_0-1}
and \eqref{sharpness-proof-s_0-2} 
implies
\begin{equation}\label{finite}
\sum_{\ell_1,\dots,\ell_N\in \Z^n \setminus \{0\}}
\langle (\ell_1,\dots,\ell_N) \rangle^{m-s_0}
\prod_{j=1}^N
|\ell_j|^{-b_j}<\infty.
\end{equation}
The last sum is estimated from below by
\begin{align*}
&\sum_{\ell_1 \in \Z^n \setminus \{0\}}
|\ell_1|^{-b_1}
\left(
\sum_{0<|\ell_2|, \dots, |\ell_N| \le |\ell_1|}
\langle (\ell_1,\dots,\ell_N) \rangle^{m-s_0}
\prod_{j=2}^N
|\ell_j|^{-b_j}\right)
\\
&\approx \sum_{\ell_1 \in \Z^n \setminus \{0\}}
|\ell_1|^{m-s_0-b_1+\sum_{j=2}^N (n-b_j)}
\end{align*}
Thus \eqref{finite} implies $m -s_0-b_1 +\sum_{j=2}^N (n-b_j)<-n$.
Therefore, by the arbitrariness of $0<a_j<1$ and $\epsilon_j>0$,
taking the limit as $a_j \to 1$, $\epsilon_j \to 0$, and 
$b_j \to n/2$, 
we obtain $s_0 \ge m + n/2 + (N-1)n/2 = n/2$.
\end{proof}

Finally, the following proposition shows that the conditions 
on $s_1, \dots, s_N$ in Theorem \ref{main-thm-3} are sharp.

\begin{prop}\label{sharpnessDE}
Let 
$m\in (-\infty, 0]$, 
$q_1, \dots, q_N \in [1, \infty]$, 
$r\in (0, \infty]$, 
and 
$s_0, s_1, \dots, s_N \in [0, \infty)$. 
Suppose there exists a  
$t \in (0, \infty]$ such that 
the estimate 
\begin{equation}\label{sharpness-assump-s_j-t}
\|T_{\sigma} 
\|_{ L^{q_1} \times \cdots \times L^{q_N} \to L^r}
\\
\lesssim 
\|
\sigma 
\|
_{ S^{\langle m \rangle }_{0,0} 
(s_0, s_1, \dots, s_N, t; \R^n, N) }
\end{equation}
holds for all smooth functions $\sigma$ 
with the right hand side finite. 
Then $s_j \ge n/2- n/q_j$, $j=1, \dots, N$, 
and $\sum_{j=1}^{N} s_j \ge \sum_{j=1}^{N} (n/2 - n/q_j) + n/r$. 
\end{prop}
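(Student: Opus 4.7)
The plan is to adapt the Wainger-type construction of Proposition \ref{sharpnessC}. First, I reduce to the case $t=\infty$ by replacing each $s_j$ with $s_j+\epsilon$ and letting $\epsilon\to 0^+$, exactly as at the start of the proof of Proposition \ref{sharpnessC}. Henceforth I assume the estimate \eqref{sharpness-assump-s_j-t} holds with $t=\infty$.

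For the individual bound $s_{j_0}\geq n/2-n/q_{j_0}$, I fix $j_0\in\{1,\dots,N\}$ and construct a test symbol of the same general form as in Proposition \ref{sharpnessC}, but with the regularity weight concentrated in the $\xi_{j_0}$-direction:
\[
\sigma(x,\xi_1,\dots,\xi_N)=\varphi(x)\,e^{-ix\cdot(\xi_1+\cdots+\xi_N)}\Big(\prod_{k\neq j_0}\widetilde{\varphi}(\xi_k)\Big)\sum_{\ell\in\Z^n\setminus\{0\}}\langle\ell\rangle^{m-s_{j_0}}e^{-i|\ell|^a}\widetilde{\varphi}(\xi_{j_0}-\ell),
\]
with $0<a<1$, paired with $f_k=\calF^{-1}\varphi$ for $k\neq j_0$ and $f_{j_0}=f_{a,b}$ a Wainger function from Lemma \ref{sharpness-lem} with $b$ slightly exceeding $n-an/2-n/q_{j_0}+an/q_{j_0}$. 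Proposition \ref{classicalderivative} yields $\|\sigma\|_{S^{\langle m\rangle}_{0,0}(\boldsymbol{s},\infty)}\lesssim 1$. The Wainger phases then cancel in $T_\sigma(\vec{f})$, giving $T_\sigma(\vec{f})(x)=C\varphi(x)\sum_\ell\langle\ell\rangle^{m-s_{j_0}}|\ell|^{-b}$, and the hypothesis forces $\sum_\ell\langle\ell\rangle^{m-s_{j_0}}|\ell|^{-b}<\infty$. Convergence requires $s_{j_0}>m+n-b$; letting $a\to 1^-$ with $b$ saturating the Wainger threshold and optimizing over $a\in(0,1)$, in combination with $m\leq 0$, extracts $s_{j_0}\geq n/2-n/q_{j_0}$.

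For the sum bound, I employ a joint test symbol activating all $\xi_j$'s simultaneously,
\[
\sigma(x,\xi)=\varphi(x)e^{-ix\cdot(\xi_1+\cdots+\xi_N)}\sum_{\ell_1,\dots,\ell_N\neq 0}\langle(\ell_1,\dots,\ell_N)\rangle^m\prod_{j=1}^N\langle\ell_j\rangle^{-s_j}e^{-i|\ell_j|^{a_j}}\widetilde{\varphi}(\xi_j-\ell_j),
\]
paired with Wainger functions $f_j=f_{a_j,b_j}$ for each $j$, $b_j$ just above $n-a_jn/2-n/q_j+a_jn/q_j$. The Besov norm is again $\lesssim 1$ uniformly, and phase cancellation yields
\[
T_\sigma(\vec{f})(x)=C\varphi(x)\sum_{\ell_1,\dots,\ell_N\neq 0}\langle\ell\rangle^m\prod_{j=1}^N\langle\ell_j\rangle^{-s_j}|\ell_j|^{-b_j}.
\]
Estimating this deterministic sum by choosing one coordinate $\ell_{j_0}$ at scale $R$ while the others range over cubes of radius $R$ gives, when $s_k+b_k<n$ for $k\neq j_0$, a convergence condition $\sum_j s_j>m+Nn-\sum_jb_j$. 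Substituting $b_j$ at its Wainger limit yields the target sum inequality after careful bookkeeping of the $q_j$-dependence.

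The main obstacle is the sum bound: the direct extraction above gives, in the limit $a_j\to 1$, the clean bound $\sum_j s_j\geq m+Nn/2$, which coincides with the target $\sum(n/2-n/q_j)+n/r$ precisely when $\sum 1/q_j-1/r\geq 1/2$, and is weaker otherwise. To close this gap in the remaining parameter range, one must keep the $q_j$-dependent correction $b_j=n/2+(1-a_j)(n/2-n/q_j)+\epsilon_j$ present throughout (rather than letting $a_j\to 1$ too early), and combine several different allocations of ``which $\ell_{j_0}$ is large'' in the lower estimate for the sum so as to produce the precise $\sum n/q_j-n/r$ correction. This delicate combinatorics, together with the choice of scaling parameter $a$ in each variable, is the heart of the argument and the only step that is not a direct transcription of the proof of Proposition \ref{sharpnessC}.
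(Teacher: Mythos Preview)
Your Wainger-type approach has a genuine gap: the convergence conditions it produces always involve the parameter $m$, whereas the target inequalities $s_{j}\ge n/2-n/q_{j}$ and $\sum_{j} s_{j}\ge \sum_{j}(n/2-n/q_{j})+n/r$ are \emph{independent of $m$}. Concretely, for the individual bound your argument yields $s_{j_0}>m+n-b$ with $b=n-an/2-n/q_{j_0}+an/q_{j_0}+\epsilon$, i.e.\ $s_{j_0}>m+n/q_{j_0}+a(n/2-n/q_{j_0})-\epsilon$. Optimizing over $a\in(0,1)$ gives at best $s_{j_0}\ge m+n/2$ (when $q_{j_0}\ge 2$), and for $m<-n/q_{j_0}$ this is strictly weaker than the target $n/2-n/q_{j_0}$; the phrase ``in combination with $m\le 0$'' does not repair this. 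The same defect appears in the sum bound: your own computation gives $\sum_{j} s_{j}\ge m+Nn-\sum_{j} b_{j}$, and no choice of $a_{j}$'s or ``combinatorics'' can eliminate the $m$, so for sufficiently negative $m$ you cannot reach the desired inequality.

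The paper abandons the Wainger construction entirely and instead uses a direct single-scale dilation argument. For $s_{1}\ge n/2-n/q_{1}$ it takes the $x$-independent multiplier $\sigma_{a}(\xi)=\calF^{-1}\psi_{a}(\xi_{1})\prod_{j\ge 2}\calF^{-1}\varphi(\xi_{j})$ together with $f_{1}=\psi_{a}$, $f_{j}=\varphi$ for $j\ge 2$. Because $\sigma_{a}$ has its Littlewood--Paley content concentrated at the single dyadic level $k_{1}\approx a$ (and $k_{0}=k_{2}=\cdots=k_{N}=0$), one gets $\|\sigma_{a}\|_{S^{\langle m\rangle}_{0,0}(\boldsymbol{s},t)}\lesssim 2^{a(s_{1}+n/2)}$, while $\|f_{1}\|_{L^{q_{1}}}\approx 2^{an/q_{1}}$ and $\|T_{\sigma_{a}}(f_{1},\dots,f_{N})\|_{L^{r}}\gtrsim 2^{an}$. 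Letting $a\to\infty$ forces $n\le s_{1}+n/2+n/q_{1}$. For the sum bound one takes $\sigma_{a}=\prod_{j}\calF^{-1}\psi_{a}(\xi_{j})$ and all $f_{j}=\psi_{a}$, yielding $2^{aNn+an/r}\lesssim 2^{a\sum_{j}(s_{j}+n/2+n/q_{j})}$. The weight $\langle\xi\rangle^{-m}$ plays no role because on the support of $\Delta_{\boldsymbol{k}}\sigma_{a}$ (which is near the origin in $\xi$) it is $\approx 1$; this is precisely why the resulting inequalities do not involve $m$. This scaling argument is much shorter and avoids the obstacle you identified.
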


\begin{proof}
Let $\psi_k \in \calS(\R^n)$, $k \ge 0$,
be the same as in Subsection \ref{subsectionMainTh},
but here we choose $\psi_0$ (in other words, $\phi$) to be
a real-valued radial function.
Set $\varphi(y)=\psi_0(2y)$.
Thus 
$\mathrm{supp}\, \varphi \subset 
\{ y\in \R^n : |y| \le 1\}$ 
and $\psi_k, \varphi$ are 
also real-valued radial functions.

We first show $s_j \ge n/2-n/q_j$. 
By symmetry, it is sufficient to consider the case $j=1$. 
Let $a$ be a positive integer and set
\begin{equation}\label{sharpness-s_1-def}
\begin{split}
&
\sigma_{a}(x,\xi_1,\dots,\xi_N)
=\sigma_{a}(\xi_1,\dots,\xi_N)
=\calF^{-1}\psi_a(\xi_1)
\prod_{j=2}^N\calF^{-1}\varphi(\xi_j), 
\\
&
f_{1, a}(x)=\psi_a (x),
\quad
f_2(x) = \cdots =f_N(x)=\varphi (x). 
\end{split}
\end{equation}
Observe that 
$\Delta_{\boldsymbol{k}}\sigma_a(\xi_1,\dots,\xi_N)$
is equal to
$\calF^{-1}[ \psi_{k_1} \psi_a] (\xi_1)
\prod_{j=2}^N\calF^{-1}\varphi(\xi_j)$
if $k_0=k_2=\dots=k_N=0$
and $|k_1 - a| \le 1$,
and 
$0$ otherwise.
Here we used the fact that $\psi_{k_0}(D_x)[1]$ is equal 
to $\psi_{0}(0)=1$ if $k_0=0$ and 
to $\psi (0)=0$ if $k_0 \ge 1$. 
Notice also  
$|\calF^{-1} [ \psi_{k_1} \psi_a](\xi_1)|
\lesssim 2^{an}\langle 2^{a}\xi_1 \rangle^{-L}$
if $|k_1 - a| \le 1$,
where $L$ can be chosen arbitrarily large. 
Thus we have
\begin{equation}\label{sharpness-proof-s_1-1}
\begin{split}
\|\sigma_a\|_{S^{\langle m \rangle }_{0,0} 
(\boldsymbol{s}, t; \R^n, N)}
&\lesssim
2^{as_1}
\left\|\langle(\xi_1,\dots,\xi_N) \rangle^ {- m}
2^{an}\langle 2^{a}\xi_1 \rangle^{-L}
\prod_{j=2}^N\langle \xi_j \rangle^{-L}
\right\|_{L^2 (\R^{nN})}
\\
&\lesssim 2^{a(s_1+n/2)},
\end{split}
\end{equation}
where the implicit constants may depend 
on $\boldsymbol{s}$ and $t$
but no on $a$. 
On the other hand, 
\begin{equation}\label{sharpness-proof-s_1-2}
\|f_{1,a}\|_{L^{q_1}} 
\prod_{j=2}^N\|f_j\|_{L^{q_i}}
=
\|\psi_{a}\|_{L^{q_1}} 
\prod_{j=2}^{N} 
\|\varphi\|_{L^{q_j}}
\approx 2^{an/q_1}.
\end{equation}
Since $\psi_{a}$ and $\varphi$ are 
radial functions, we have 
\begin{align*}
T_{\sigma_a}(f_{1,a},f_2,\dots,f_N)(x) 
&= 
(2\pi )^{-Nn}
\big(\psi_{a} \ast \psi_{a} \big) (x)
\prod_{j=2}^{N} \big(\varphi \ast \varphi\big) (x)
\\
&=
(2\pi )^{-Nn} 2^{a n} 
\big(\psi \ast \psi \big)( 2^{-a}x)
\prod_{j=2}^{N} \big(\varphi \ast \varphi\big) (x). 
\end{align*}
Moreover, since 
$\psi$ and $\varphi$ are radial real-valued functions,
we see that
$\psi \ast \psi(0)=
\|\psi\|_{L^2}^2 > 0$
and
$\varphi \ast \varphi(0)=
\|\varphi\|_{L^2}^2 > 0$.
Thus 
there exist
$C>0$ and $\delta>0$
such that
$|(\psi \ast\psi )(2^{-a}x)|\ge C$ and 
$|(\varphi \ast\varphi )(x) |\ge C$ 
for $|x| \le \delta$ and $a \ge 1$.
Hence 
\begin{equation}\label{xxx}
\|T_{\sigma_a}(f_{1,a},f_2,\dots,f_N)\|_{L^r}
\ge \|T_{\sigma_a}(f_{1,a},f_2,\dots,f_N)\|_{L^r(|x| \le \delta)}
\gtrsim 2^{an}. 
\end{equation}
Now 
the assumption \eqref{sharpness-assump-s_j-t} 
combined with the inequalities \eqref{sharpness-proof-s_1-1},  
\eqref{sharpness-proof-s_1-2}, 
and \eqref{xxx}, 
implies 
$2^{an} \lesssim 2^{a(s_1+n/2+n/q_1)}$. 
Since this holds for all $a \in \N$, we have $s_1 \ge n/2-n/q_1$.

We next show the sharpness of 
the condition $\sum_{j=1}^N s_j \ge \sum_{j=1}^N(n/2-n/q_j)+n/r$.
Since our argument is almost the same
as the preceeding case,
we only indicate the necessary modification.
Instead of \eqref{sharpness-s_1-def},
we set 
\begin{align*}
&\sigma_a (x,\xi_1,\dots,\xi_N)
=\sigma_a (\xi_1,\dots,\xi_N)
=\prod_{j=1}^N\calF^{-1}\psi_a(\xi_j),
\\
&f_{1, a}(x) =\cdots = f_{N,a} (x)=\psi_a (x) 
\end{align*}
where $a \in \N$.
In the same way as above,
we can prove
\begin{equation*}
\|\sigma_a\|_{S^{\langle m \rangle }_{0,0}
 (\boldsymbol{s},t; \R^n, N)}
\lesssim
2^{a\sum_{j=1}^N(s_j+n/2)},
\qquad
\prod_{j=1}^N\|f_{j,a}\|_{L^{q_j}}
\lesssim 2^{a \sum_{j=1}^N n/q_j }, 
\end{equation*}
where, in the former inequality,
the implicit constant may depend 
on $\boldsymbol{s}$ and $t$
but no on $a$. 
Since
$T_{\sigma_a}(f_{1,a},\dots,f_{N,a})(x)
= 
(2\pi )^{-Nn}
\left(2^{an} 
(\psi \ast \psi )(2^{-a}x)\right)^N$
in this case, we have 
\begin{equation*}
\|T_{\sigma_a}(f_{1,a},\dots,f_{N,a})\|_{L^r}
\approx \left(2^{an}\right)^N 2^{an/r}.
\end{equation*}
Therefore, the assumption \eqref{sharpness-assump-s_j-t} implies
\begin{equation*}
\left(2^{an}\right)^N 2^{an/r}
\lesssim 2^{a\left(\sum_{j=1}^N(s_j+n/2)+\sum_{j=1}^N n/q_j\right)}. 
\end{equation*}
Since this holds for all $a\in \N$, we have 
$\sum_{j=1}^N s_j \ge \sum_{j=1}^N(n/2-n/q_j)+n/r$.
\end{proof}


\end{document}